\documentclass[11pt,a4paper]{article}
\usepackage[T1]{fontenc}
\usepackage[utf8]{inputenc}
\usepackage{lmodern}
\usepackage[margin=28mm]{geometry}
\usepackage{amsmath,amssymb,amsthm,mathtools}
\usepackage{booktabs,array,microtype}
\usepackage{xcolor}
\usepackage[colorlinks=true,linkcolor=blue!55!black,citecolor=blue!55!black,urlcolor=blue!55!black]{hyperref}
\hypersetup{pdftitle={Large cyclic automorphism groups and cyclic subgroups of index two in characteristic two},pdfauthor={Marco Timpanella}}
\newtheorem{theorem}{Theorem}[section]
\newtheorem{proposition}[theorem]{Proposition}
\newtheorem{lemma}[theorem]{Lemma}
\newtheorem{corollary}[theorem]{Corollary}
\theoremstyle{definition}
\newtheorem{example}[theorem]{Example}
\newtheorem{remark}[theorem]{Remark}
\DeclareMathOperator{\Aut}{Aut}
\DeclareMathOperator{\lcm}{lcm}
\DeclareMathOperator{\ord}{ord}
\newcommand{\PP}{\mathbb P}
\newcommand{\FF}{\mathbb F}
\newcommand{\XX}{\mathcal X}
\newcommand{\YY}{\mathcal Y}
\newcommand{\ZZ}{\mathcal Z}
\newcommand{\NN}{\mathrm N}
\numberwithin{equation}{section}
\title{Large cyclic automorphism groups and cyclic subgroups\newline of index two in characteristic two}
\author{Marco Timpanella}
\date{}

\begin{document}
\maketitle
\begin{abstract}
Let $\XX$ be a projective, geometrically irreducible, nonsingular algebraic curve of genus $g\ge2$ over an algebraically closed field of characteristic two. We classify cyclic subgroups of $\Aut(\XX)$ of order $N\ge2g+1$. Besides Kummer extensions of odd degree ramified over three points, precisely two cases occur: $y^2+y=x^m$, with $N=2m=4g+2$, and $y^2+y=c/(x^m+1)$, with $m$ odd, $c\ne0$, and $N=2m=2g+2$. In particular, $4\nmid N$. We then classify groups $H$ with a cyclic subgroup of index two and $|H|>4g+4$. They are precisely the groups $C_k\times D_{2M}$ on the curves $y^k=x^M+x^{-M}$, where $k,M\ge3$ are odd and coprime, and $2g=M(k-1)$. Their possible orders are $4g+2M$, where $M$ runs over certain odd divisors of $g$, and $|H|\le6g$. No curve in this family is ordinary. Hermitian curves occur in the family, and we give two sufficient conditions for maximality over finite fields. For dihedral groups the sharp bound is $4g+4$ in even genus and $4g$ in odd genus. The equality cases are given by explicit Artin--Schreier equations, and in each case the dihedral group is the full automorphism group.
\end{abstract}
\noindent\textbf{Keywords.} Algebraic curves; characteristic two; cyclic automorphism groups; dihedral groups; wild ramification; Hermitian curves; maximal curves.\par
\noindent\textbf{2020 Mathematics Subject Classification.} 14H37; 14H05; 14G15.

\section{Introduction and main results}

Throughout the paper, $K$ is an algebraically closed field of characteristic two and $\XX$ is a projective, geometrically irreducible, nonsingular algebraic curve defined over $K$, of genus $g=g(\XX)\ge2$. We denote by $K(\XX)$ its function field and by $\Aut(\XX)$ the group of $K$-automorphisms of $K(\XX)$.

A cyclic subgroup of $\Aut(\XX)$ whose order is large with respect to $g$ imposes strong restrictions on the curve. The reason is that the Riemann--Hurwitz formula leaves little room for the genus of the quotient and for ramification. At the threshold $N\ge2g+1$, the quotient by a cyclic group of order $N$ is rational and only a small number of ramification configurations can occur. The problem is therefore sufficiently rigid to admit a classification, but it still reflects the difference between tame and wild ramification.

In characteristic zero, possible large orders were studied by Nakagawa \cite{Nakagawa}, and Irokawa and Sasaki classified cyclic groups of order at least $2g+1$ \cite{IS}. A related question is whether a large cyclic group is contained in a larger automorphism group. This extension problem was studied by Singerman and Watson \cite{SW} and by Bujalance and Conder \cite{BC}. For dihedral groups the sharp bound is $4g+4$ when $g$ is even and $4g$ when $g$ is odd; see \cite{BCGG}. We refer to \cite{Broughton} for a survey of automorphism groups of algebraic curves in characteristic zero.

In positive characteristic the tame part of this picture remains essentially unchanged, whereas the wild part produces new curves. Some earlier results on cyclic groups in positive characteristic can be found in \cite{Homma,Sanjeewa}; despite its title, \cite{Sanjeewa} assumes that the characteristic is different from two. In \cite{DGTcyclic}, cyclic groups of order at least $2g+1$ were classified in odd characteristic. In \cite{DGTindex}, groups with a cyclic subgroup of index two were classified above the bound $4g+4$, and the dihedral bound was proved in odd characteristic. The present paper treats the remaining characteristic-two case. Together with \cite{DGTcyclic,DGTindex} and the classical results, it completes the classification at these thresholds over algebraically closed fields of arbitrary characteristic. In characteristic two, Giulietti and Korchm\'aros constructed ordinary curves of genus $g=2^h+1$ with a dihedral group of order $4(g-1)$ acting without fixed points \cite{GK2}; these examples lie just below the odd-genus bound proved here.

The case of characteristic two is not obtained by simply putting $p=2$ in the odd-characteristic classification. If the cyclic order is even, its involution is wild and the corresponding quadratic extension is Artin--Schreier. Moreover, an involution of a rational curve has one fixed point in characteristic two, while it has two fixed points in odd characteristic. Both differences affect the possible ramification configurations. There is also a difference in the genus of the curves $y^k=x^M+x^{-M}$: in odd characteristic the same equation has twice the genus and the group $C_k\times D_{2M}$ remains below the bound $4g+4$. The reason is the identity $x^{2M}+1=(x^M+1)^2$ in characteristic two; see Remark~\ref{rem:char2phenomenon}.

Grothendieck's tame lifting theorem gives a useful comparison with the classical case. When the cyclic group has odd order, its action can be lifted to characteristic zero, and the corresponding part of our classification could also be deduced from the theorem of Irokawa and Sasaki; see \cite[Expos\'e~XIII]{SGA1}. This observation does not substantially simplify the characteristic-two problem. The even cyclic actions considered here are wild, the involution occurring in the index-two problem gives wild ramification, and every dihedral group has even order. Thus the parts of the classification that are not already covered by the classical theory are precisely those to which tame lifting does not apply. They require a direct study of wild ramification.

Our first result completes the classification of large cyclic groups in characteristic two. The tame case has the same form as in characteristic zero and odd characteristic: the quotient is rational, the cover ramifies over three points, and the curve has a Kummer equation
\[
 y^N=x^r(x+1)^s.
\]
The even case is more rigid. We prove that $4\nmid N$ and that only the two Artin--Schreier curves in cases~\textup{(II)} and~\textup{(III)} of Theorem~\ref{thm:cyclic} occur. Thus the obstruction coming from characteristic two does not produce a long list of exceptional cases; it produces exactly two families.

We then turn to groups $H$ having a cyclic subgroup $G$ of index two. When $|H|>4g+4$, the group $G$ is necessarily odd. Its cover has three ramification points. The involution induced by $H/G$ on the rational quotient fixes one of them and exchanges the other two. This simple observation, combined with the exponents of the Kummer equation, determines both the curve and the group. We obtain
\[
 \XX_{k,M}:\quad y^k=x^M+x^{-M},
 \qquad H\cong C_k\times D_{2M},
\]
where $k,M\ge3$ are odd and coprime. This gives the exact spectrum of possible orders above $4g+4$ and the uniform bound $|H|\le6g$. It also leads to sharp bounds for dihedral groups: $4g+4$ when $g$ is even and $4g$ when $g$ is odd. Although these are also the sharp bounds in characteristic zero, the proof here is different because every reflection is wild. We also determine the extremal curves explicitly and prove that their dihedral groups are their full automorphism groups.

The explicit equation of $\XX_{k,M}$ allows us to go beyond the classification. We determine the ramification of the displayed group, prove that these curves are not ordinary, identify the Hermitian curves occurring in the family, and give two sufficient conditions for maximality over finite fields. In this way the classification also produces a concrete family whose arithmetic and geometric properties can be studied directly.

We write $C_n$ for a cyclic group of order $n$ and use the convention
\[
 D_{2n}=\langle r,t\mid r^n=t^2=1,\ trt=r^{-1}\rangle,
\]
for the dihedral group of order $2n$.

We write $\gamma(\XX)$ for the $2$-rank, or equivalently the Hasse--Witt invariant, of $\XX$. As usual, $0\le\gamma(\XX)\le g$, and $\XX$ is ordinary if and only if $\gamma(\XX)=g$. For a finite subgroup $G\le\Aut(\XX)$ we denote by $\XX/G$ the quotient curve; a $G$-orbit with fewer than $|G|$ points is called a short orbit.

\begin{theorem}\label{thm:cyclic}
Let $G\le\Aut(\XX)$ be cyclic of order $N\ge2g+1$. Then $\XX/G$ is rational and, up to a $K$-isomorphism of $\XX$ carrying $G$ onto the displayed cyclic group, one of the following three cases occurs.
\begin{enumerate}
\item[\textup{(I)}] $N$ is odd and
\begin{equation}\label{eq:tame}
 \XX:\quad y^N=x^r(x+1)^s,
 \qquad r,s\ge1,\quad r+s\le N-1,\quad \gcd(N,r,s)=1.
\end{equation}
Here $G$ is generated by $(x,y)\mapsto(x,\zeta_N y)$ and
\begin{equation}\label{eq:tamegenus}
 2g=N+2-\gcd(N,r)-\gcd(N,s)-\gcd(N,r+s).
\end{equation}
\item[\textup{(II)}] $N=2m$, with $m\ge5$ odd, and
\begin{equation}\label{eq:onepole}
 \XX:\quad y^2+y=x^m,\qquad g=(m-1)/2.
\end{equation}
Here $\gamma(\XX)=0$ and $G$ is generated by $(x,y)\mapsto(\zeta_m x,y+1)$.
\item[\textup{(III)}] $N=2m$, with $m\ge3$ odd, and
\begin{equation}\label{eq:ordinary}
 \XX:\quad y^2+y=\frac{c}{x^m+1},\qquad c\in K^*,\qquad g=m-1.
\end{equation}
The curve is ordinary, and $G$ has the same generator as in case~\textup{(II)}.
\end{enumerate}
Conversely, every curve and subgroup in the list satisfies $N\ge2g+1$. In particular, $4\nmid N$.
\end{theorem}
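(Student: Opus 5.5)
We apply Riemann--Hurwitz to the cover $\XX\to\XX/G$ and split according to the parity of $N$. Write $\bar g=g(\XX/G)$. If $\bar g\ge1$, Riemann--Hurwitz gives $2g-2\ge N(2\bar g-2)+\sum_P d_P$. A short orbit contributes at least $N/2$ in the tame case, and at least comparable amounts in the wild case. From this we get $N\le 4g+4$ only when $\bar g=0$, and $N\le 2g$ or less otherwise. So $\bar g\ge1$ is impossible, and $\XX/G$ is rational.

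\textbf{Odd $N$.} Here every ramification is tame and the cover is a Kummer cover of $\PP^1$, of the form $y^N=\prod(x-a_i)^{e_i}$. Riemann--Hurwitz reads
\[
2g-2=-2N+\sum_i\bigl(N-\gcd(N,e_i)\bigr),
\]
summed over the branch points, including $\infty$. Each term is at least $N/3$, since $N$ is odd and the smallest proper divisor quotient is $3$. With four or more branch points we would get $2g-2\ge -2N+4\cdot\tfrac{2N}{3}$, which contradicts $N\ge 2g+1$. Two branch points would force $g=0$. So there are exactly three branch points. A Möbius transformation sends them to $0,-1,\infty$, and adjusting $y$ gives the normal form \eqref{eq:tame} together with formula \eqref{eq:tamegenus}. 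Alternatively, one can invoke Grothendieck tame lifting together with Irokawa--Sasaki.

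\textbf{Even $N$.} Write $N=2^a m$ with $m$ odd, and let $S$ be the Sylow $2$-subgroup. The fixed field $K(\XX)^{C_m}$ is a $C_{2^a}$-extension of $K(x)=K(\XX)^G$ lying below $K(\XX)$. I would pass to the intermediate curve $\YY=\XX/C_m$, which carries the action of $S$, and to $\ZZ=\XX/S$, which carries the action of $C_m$ and is a tame cover of $\PP^1$. The $2$-rank (Deuring--Shafarevich) and the different exponents in Artin--Schreier(--Witt) extensions then control $g$. The key point is that $C_m$ acts on $\ZZ$ and normalizes the wild part. Hence the wild branch locus of $\YY\to\PP^1$ is a union of $C_m$-orbits of branch points of $\ZZ\to\PP^1$, and the totally ramified points of $\XX$ lie over the fixed points of $C_m$ on $\PP^1$, namely $0$ and $\infty$.

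I would first show $a=1$. If $4\mid N$, the element of order $4$ acts with wild conductors at least $1$ and $3$ at a totally ramified point (Hasse--Arf). Riemann--Hurwitz then gives $2g-2\ge -2N+\tfrac{N}{4}\bigl(\text{different}\bigr)$, and this different term is large enough to force $N\le 2g$. Next, with $a=1$, the involution $\sigma=\rho^m$ is central, so $\XX/\langle\sigma\rangle$ carries an action of $C_m$, and $\XX$ is $y^2+y=f(x)$ with $f$ invariant up to $\wp$ under $x\mapsto\zeta_m x$. Counting with Riemann--Hurwitz on $\XX/\langle\sigma\rangle$ together with the bound $N\ge 2g+1$ forces that quotient to be rational and the $C_m$-action on it to be $x\mapsto\zeta_m x$. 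For $\rho$ to have order $2m$ we need $\rho$ to act as $y\mapsto y+1$. Then $f(\zeta x)-f(x)\in\wp(K(x))+1$, which means $f$ must be an eigenfunction giving a nontrivial character. Taking $f$ reduced (poles of odd order only), the pole set of $f$ is $C_m$-invariant. Using $2g=\sum_P(d_P+1)$, with the sum over poles of $f$, the possibilities are:
\begin{itemize}
\item a single pole at $\infty$ of order $m$, which gives case (II);
\item simple poles at an orbit of size $m$, which gives case (III) after scaling to $x^m+1$;
\item anything else, which violates $N\ge2g+1$.
\end{itemize}

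The converse direction is a direct genus computation, using the $2$-rank via Deuring--Shafarevich: in case (II) there is one totally ramified point, so $\gamma=0$; in case (III) all $m$ poles are simple, so $\gamma=m-1=g$. Finally, $m\ge5$ in (II) is forced by the requirement $g\ge2$.

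The main obstacle is the exclusion of $4\mid N$, together with the pinning-down of the pole configuration of $f$. This requires careful lower bounds on differents for wild cyclic $2$-groups via higher ramification groups. I would try first to reduce to the case where the Sylow $2$-subgroup fixes a point, and then apply the Hasse--Arf jumps.
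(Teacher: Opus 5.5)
Your outline has the same architecture as the paper: rationality of $\XX/G$, the tame Kummer case, exclusion of $4\mid N$, then the quotients by $C_2$ and $C_m$ when $N=2m$. However, at the decisive steps you substitute Riemann--Hurwitz counting for an argument, and counting alone is not enough. The missing ingredient is a constraint on tame cyclic covers of $\PP^1$: the point stabilizers must generate the group, since otherwise a proper quotient would be an unramified cover of $\PP^1$. This is what yields the lcm conditions of Lemma~\ref{lem:kummer} and rules out exactly one branch point. Without it, three of your steps fail.

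(i) For odd $N$, four branch points give only $2g-2\ge 2N/3$, i.e.\ $N\le 3g-3$, which does not contradict $N\ge2g+1$. For example, $N=9$ with four branch points of index $3$ gives $2g-2=-18+4\cdot6=6$, so $g=4$ and $N=2g+1$ exactly. This configuration is excluded only because all four stabilizers are the same $C_3\le C_9$. The paper uses the budget $\sum_Q\delta_Q<3$ to reduce to indices $(3,3,3,\ast)$ or $(3,3,5,\ast)$, and then the lcm condition gives $N\in\{3,15\}$.

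(ii) If $4\mid N$, the jump bounds at the point fixed by the Sylow $2$-subgroup give only $\delta_{Q_0}\ge 9/4-1/(4t)\ge2$. This is compatible with $\sum_Q\delta_Q=2+(2g-2)/N<3$; for instance, a $C_4$-cover of $\PP^1$ with one totally ramified point and jumps $1,3$ has genus one. So the different term does not force $N\le 2g$. The contradiction comes from the degree-$m$ Kummer cover $\XX/C_4\to\XX/G$. For $m>1$ it must branch over $Q_0$ and exactly one further point, both with index $m$. This forces $t=m$ and $2g-2\ge5m-5$, against $4m\ge2g+1$. The case $8\mid N$ also needs its own estimate ($\delta\ge7/2$ from the $C_8$ jumps), which you do not supply.

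(iii) If $g(\XX/G)=1$, a single tame short orbit gives only $N\le 3g-3$. You must show there are two such orbits, again by the Kummer argument.

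Second, when $N=2m$, your claim that counting ``forces $\XX/\langle\sigma\rangle$ to be rational'' is the core of the proof, and it is not argued. The paper first shows that exactly one branch point is wild (Lemma~\ref{lem:onewild}) and at most two are tame. It then uses that the break $j$ at the wild point is an odd multiple of $t$, because the involution commutes with the tame part. In the hardest configuration, three Kummer branch points with $t>1$, it needs the inequality $m+2\ge d_0+2d_1+2d_2$ for the pairwise coprime $d_i=m/e_i$ (Lemma~\ref{lem:arithmetic}) to reach $g\ge m$. Only after all this is the Kummer cover known to branch over two points with index $m$, hence to be rational.

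Your final step also needs rephrasing. Take the Artin--Schreier generator in the fixed field of $C_m$, which is a quadratic extension of $K(x^m)$. Then $f=F(x^m)$ is genuinely $C_m$-invariant, and $G$ is generated by $(x,y)\mapsto(\zeta_m x,y+1)$. Your condition that $f(\zeta x)+f(x)$ lies in $\{h^2+h\}+1$ ``as an eigenfunction of a nontrivial character'' is not meaningful: the shift is vacuous, since $1=\omega^2+\omega$ with $\omega\in\FF_4$. Likewise, $C_m$ has no fixed points on $\PP^1=\XX/G$, where it acts trivially. Once rationality with the action $x\mapsto\zeta_m x$ is established, your pole count for $F$ does give exactly (II) and (III). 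For odd $N$, citing tame lifting together with Irokawa--Sasaki is a legitimate substitute for the direct argument.
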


The exponents $(r,s,-r-s)$ in case~\textup{(I)} are considered modulo $N$. A permutation of the three points of $\XX/G$ over which the cover ramifies, or multiplication of all three exponents by an integer coprime to $N$, may give another equation for the same curve and cyclic group; no uniqueness of $(r,s)$ is asserted.

\begin{theorem}\label{thm:index}
Let $H\le\Aut(\XX)$ contain a cyclic subgroup $G$ of index two, and assume $|H|>4g+4$. Then $|G|$ is odd. There exist odd coprime integers $k,M\ge3$ such that
\begin{equation}\label{eq:family}
 \XX\cong\XX_{k,M}:\quad y^k=x^M+x^{-M},
 \qquad 2g=M(k-1),
\end{equation}
and, under this isomorphism,
\begin{equation}\label{eq:Haction}
 H=\langle a,b,t\rangle\cong C_k\times D_{2M},
\end{equation}
where
\[
 a:(x,y)\mapsto(x,\zeta_k y),\qquad
 b:(x,y)\mapsto(\zeta_M x,y),\qquad
 t:(x,y)\mapsto(x^{-1},y),
\]
and $G=\langle a,b\rangle\cong C_{kM}$. Conversely, every such pair $(k,M)$ gives a group satisfying the hypotheses.
\end{theorem}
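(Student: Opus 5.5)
Since $|G|=|H|/2>2g+2\ge2g+1$, Theorem~\ref{thm:cyclic} applies to $G$: the quotient $\XX/G$ is rational and $G$ is one of cases (I)--(III). The first step is to rule out cases (II) and (III), that is, to show $|G|$ is odd. In case (II), $|H|=4m$ while $4g+4=2m+2$, so the order bound alone does not exclude it. Here I would use that $H$ normalizes $G$, hence normalizes the unique involution $u$ of $G$, and so $H$ descends to $\XX/\langle u\rangle\cong\PP^1$. There $G/\langle u\rangle\cong C_m$ fixes $0$ and $\infty$; in case (II) the point $\infty$ is the unique branch point of the Artin--Schreier cover, so $H$ fixes it. The induced group $H/\langle u\rangle$ of order $2m$ then fixes $\infty$ and normalizes $C_m$ fixing $0$; its $2$-part acts as a translation $x\mapsto x+\beta$, which cannot normalize $\langle x\mapsto\zeta x\rangle$ unless $\beta=0$. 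Hence $H/\langle u\rangle$ is cyclic, so $H$ is abelian of order $4m$, and it is cyclic or $C_2\times C_{2m}$. A cyclic $H$ is excluded by $4\nmid N$. In the other case $H$ has three involutions in its Sylow $2$-subgroup $C_2\times C_2$; I would show this is incompatible with $\gamma=0$ and the specific equation, for instance by checking which involutions of $y^2+y=x^m$ commute with $x\mapsto\zeta x$. Case (III) satisfies $|H|=4m>4g+4=4m$ only if $4m>4m$, which is false, so it is excluded immediately.

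With $N=|G|$ odd we are in case (I), with three branch points $P_1,P_2,P_3$ on $\XX/G=\PP^1$ and ramification indices $e_i=N/\gcd(N,\cdot)$. The quotient $H/G$ acts as an involution $\tau$ of $\PP^1$, and $\tau$ preserves the branch locus together with the inertia data. In characteristic two, $\tau$ has exactly one fixed point. Therefore it fixes one branch point, say $P_3=\infty$ after a coordinate change, and swaps the other two, which we normalize as $0$ and... actually as $P_1,P_2$ with $\tau$ of the form $x\mapsto x+1$, or equivalently in the coordinates used below. The pair swap forces equal indices $e_1=e_2$, and compatibility of $\tau$ with the generator $a$ forces a relation on the exponents: $r\equiv\pm s$ modulo $N$, up to the multiplier by which $\tau$ conjugates $G$.

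Next I would combine Riemann--Hurwitz for $H$ with $|H|>4g+4$ to pin down the exponents. Write $N=kM$, where $k$ is the ramification index over the swapped points and $M=\gcd$ data over the fixed point. I would show that the only admissible configuration is $e_1=e_2=k$ and $e_3=M$ with $\gcd(k,M)=1$. The other options, such as all indices equal to $N$, would give $2g=N-1$ and require $H$ to be cyclic or to conjugate $a\mapsto a^{-1}$, and I expect these to fail the order bound or to reduce to $M=1$, which is excluded.

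Finally, I would rewrite the Kummer equation. Take $x^M$ as the coordinate on the $C_M$-quotient of the $C_k$-quotient. Since $\tau$ corresponds to $x\mapsto x^{-1}$, this yields $y^k=x^M+x^{-M}$, with genus $2g=M(k-1)$ computed from \eqref{eq:tamegenus} using $(x^M+1)^2=x^{2M}+1$. The converse is direct: one checks that $a$, $b$, $t$ preserve the equation and satisfy the relations of $C_k\times D_{2M}$, and that $|H|=4kM>4g+4=2M(k-1)+4$ whenever $M\ge3$. I expect the main obstacle to be the exponent and normal-form bookkeeping in the third step, namely excluding the nonabelian extension configurations.
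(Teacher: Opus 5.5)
\textbf{The main gap is your third step.} The ramification configuration you aim for is impossible, and the mechanism that actually produces $k$, $M$ and the group structure is missing.

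Equal indices at the two swapped branch points mean $\gcd(N,r)=\gcd(N,s)$. Since $\gcd(N,r,s)=1$, both are $1$, so the swapped points are totally ramified: $e_1=e_2=N$. Your proposed $(e_1,e_2,e_3)=(k,k,M)$ contradicts Lemma~\ref{lem:kummer}, because $\lcm(e_1,e_2)=k<N$. The true indices are $kM,kM,k$ (Proposition~\ref{prop:Hramification}).

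What you need is the following argument. Normalize the residues at the swapped points to $1$ and $s$. If $t\rho t=\rho^a$, Lemma~\ref{lem:kummernormalform} gives $t(y)=h(x)y^a$. Since $t$ exchanges the two points, $s\equiv a$, hence $s^2\equiv1\pmod N$. Your guess ``$r\equiv\pm s$'' (that is, $a\equiv\pm1$) is exactly what must \emph{not} happen:
\begin{itemize}
\item $s\equiv1$ makes $H$ abelian with $|H|=4g+2$;
\item $s\equiv-1$ makes the residue $-(1+s)$ at the third point vanish, so that point is unramified.
\end{itemize}
The answer comes from the mixed solutions of $s^2\equiv1$. As $N$ is odd, every prime power dividing $N$ divides exactly one of $s\pm1$. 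So $M=\gcd(N,s+1)$ and $k=N/M$ are coprime, with $s\equiv-1\pmod M$ and $s\equiv1\pmod k$. Then:
\begin{itemize}
\item \eqref{eq:tamegenus}, with greatest common divisors $1,1,M$, gives $2g=N-M$;
\item the hypothesis $|H|>4g+4$ becomes $M\ge3$;
\item $H\cong C_k\times D_{2M}$, because $t$ centralizes $C_k$ and inverts $C_M$.
\end{itemize}
None of this appears in your plan. Without it you have neither the group structure nor the genus formula.

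\textbf{Your last step is asserted rather than proved.} To reach $y^k=x^M+x^{-M}$ you must show the following.
\begin{itemize}
\item $\XX/C_k$ is rational. This follows from Riemann--Hurwitz for the tame degree-$k$ cover, which is totally ramified at $M+2$ points.
\item $H/C_k\cong D_{2M}$ acts on it by $u\mapsto\zeta_Mu$ and $u\mapsto u^{-1}$.
\item The branch set is $\{0,\infty\}\cup\{u^M=\lambda\}$ with $\lambda=\lambda^{-1}$, whence $\lambda=1$ in characteristic two.
\item Since $C_k$ is central in $H$, the lifts of $b$ and $t$ preserve Kummer residues. These are therefore some $\alpha$ at $0,\infty$ and some $\beta$ at the $M$th roots of unity. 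Normalizing $\beta=2$ forces $\alpha\equiv-M\pmod k$, which gives the right-hand side $u^{-M}(u^M+1)^2$.
\item The lifts of $b$ and $t$ have trivial multipliers on $y$. This is needed to identify $H$ with $\langle a,b,t\rangle$.
\end{itemize}

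\textbf{Two smaller points.}
\begin{itemize}
\item In case~(II) of your first step, the translation argument is already a contradiction. $H/\langle u\rangle$ has order $2m$, so it contains an involution, and you have just shown that no such involution can normalize the dilations. Concluding instead that $H/\langle u\rangle$ is cyclic, and then leaving $C_2\times C_{2m}$ open, misreads your own argument. That case is in any event excluded by \eqref{eq:abelianbound}, since $|H|=4m=8g+4$.
\item In the converse, $|C_k\times D_{2M}|=2kM$, not $4kM$. The required inequality $2kM>2M(k-1)+4$ is exactly $M>2$.
\end{itemize}
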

For comparison, Reyes-Carocca and Speziali determined the full automorphism group of certain complex curves admitting a group of order $6g$; in those examples it is $C_3\times D_{2g}$ \cite{RS}; see also \cite[Remark~5.7]{DGTindex}. Theorem~\ref{thm:index} gives a direct-product structure for every group above our threshold in characteristic two, although the full group $\Aut(\XX)$ may be larger. In contrast, Theorem~\ref{thm:dihedral} shows that the extremal dihedral curves have no further automorphisms.

\begin{corollary}\label{cor:spectrumintro}
For fixed $g\ge2$, the set of possible orders greater than $4g+4$ is
\begin{equation}\label{eq:spectrum}
 \mathcal S_2(g)=
 \left\{4g+2M:\ M\mid g,\quad M\ge3\text{ odd},\quad
 \gcd\left(\frac{2g}{M}+1,M\right)=1\right\}.
\end{equation}
Every curve in Theorem~\ref{thm:index} is nonhyperelliptic and $|H|\le6g$. Equality holds if and only if $g\ge5$ is odd, $3\nmid g$, and $\XX$ is isomorphic to $\XX_{3,g}$, with $H$ identified under this isomorphism with the group generated by the maps in \eqref{eq:Haction}.
\end{corollary}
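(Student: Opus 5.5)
The corollary follows from Theorem~\ref{thm:index} by arithmetic. By that theorem, any $H$ with a cyclic subgroup of index two and $|H|>4g+4$ has $|H|=2kM$, with $k,M\ge3$ odd and coprime and $2g=M(k-1)$. Hence $|H|=2kM=2M(k-1)+2M=4g+2M$. For $M$ we get:
\begin{itemize}
\item $M$ is odd, and $M\mid 2g$, so $M\mid g$;
\item $k=2g/M+1$, and the coprimality condition becomes $\gcd(2g/M+1,M)=1$;
\item $k\ge3$ is automatic, since $2g/M\ge2$ follows from $M\mid g$.
\end{itemize}
Conversely, given such an $M$, set $k=2g/M+1$. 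This $k$ is odd and at least $3$, and coprime to $M$. The converse part of Theorem~\ref{thm:index} then yields a curve $\XX_{k,M}$ of genus $g$ with a group of order $4g+2M$. That order exceeds $4g+4$ because $M\ge3$. This gives \eqref{eq:spectrum}.

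For the bound, $M\le g$ gives $|H|\le 6g$. Equality forces $M=g$, so $g$ is odd with $g\ge3$. Then $k=3$, and the gcd condition becomes $\gcd(3,g)=1$, i.e.\ $3\nmid g$. This excludes $g=3$, so $g\ge5$. The identification of $\XX$ with $\XX_{3,g}$ and of $H$ with the displayed group is exactly the content of Theorem~\ref{thm:index} with $(k,M)=(3,g)$. Conversely, for such $g$ the pair $(3,g)$ is admissible and gives order $6g$.

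For nonhyperellipticity I would argue by contradiction. Suppose $\XX$ is hyperelliptic with hyperelliptic involution $\iota$. Then $\iota$ is central in $\Aut(\XX)$, and $\Aut(\XX)/\langle\iota\rangle$ embeds in $\mathrm{PGL}(2,K)$.
\begin{itemize}
\item \textbf{$\iota\notin H$.} The only involutions of $C_k\times D_{2M}$ are reflections $b^it$, and these are not central because $M\ge3$. So $\iota\notin H$.
\item \textbf{Consequence.} Then $H$ embeds in $\mathrm{PGL}(2,K)$. In particular $G\cong C_{kM}$, of odd order, acts on the rational curve $\XX/\langle\iota\rangle$.
\item \textbf{Tame cyclic action on $\PP^1$.} A cyclic group of odd order acting on $\PP^1$ fixes two points and acts freely elsewhere. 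Moreover $\iota$ commutes with $G$, so the hyperelliptic quotient map $\XX\to\PP^1$ is $G$-equivariant. The $2g+2$ Weierstrass points (in characteristic two, the ramification of $\XX\to\PP^1$) must therefore form a $G$-invariant set. That set is a union of $G$-orbits of size $kM$ and at most two fixed points of $\PP^1$.
\end{itemize}
To finish I would use that the branch divisor in characteristic two has at most $g+1$ points, a union of full orbits plus at most two fixed points. Since $kM=2g+M>g+1$, the branch locus lies in the two fixed points. Then $\XX\to\PP^1$ is an Artin--Schreier cover branched at at most two points, one of which is fixed by a cyclic group of odd order. Comparing conductors with $g$ should then give a contradiction, for example via the bound $|G|\le 4g+2$ for such curves, which contradicts $kM=2g+M>4g+2$ whenever $M>k$. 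This step is the main obstacle, and I have not resolved it in general.

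A cleaner route, which I would try first, is to use the explicit ramification of $G$ on $\XX_{k,M}$ from the proof of Theorem~\ref{thm:index}: the cover $\XX\to\XX/G$ is branched over three points. For a hyperelliptic curve, $G$ would descend to the rational quotient with $\iota$ commuting, and one could compare the quotient signature with that of cases (II) and (III) of Theorem~\ref{thm:cyclic}. But $G$ has odd order, so it falls under case (I). The hope is that a hyperelliptic case-(I) curve has $2g+1$ or $2g+2$ equal to $N$ times something, and that this conflicts with $N=kM=2g+M$.
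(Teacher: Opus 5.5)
Your derivation of \eqref{eq:spectrum} and of $|H|\le6g$ with its equality case is correct and matches the paper's arithmetic. The gap is the claim that every curve in Theorem~\ref{thm:index} is nonhyperelliptic. You do not prove it, and the closing step you suggest cannot work. Applying \eqref{eq:abelianbound} to $G$ alone gives nothing, since $|G|=kM=2g+M\le3g<4g+2$. Your assertion that $2g+M>4g+2$ whenever $M>k$ is false: it would require $M>2g+2$, while $M\mid g$. Your second route rests on the hope that hyperelliptic case-\textup{(I)} curves satisfy some divisibility; that is not an argument.

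The missing idea is to apply the bound to the group generated by $G$ and $\iota$, not to $G$. You already know that $\iota$ is central in $\Aut(\XX)$. Since $N=|G|$ is odd, $G\cap\langle\iota\rangle=\{1\}$, so $\langle G,\iota\rangle\cong C_N\times C_2\cong C_{2N}$ is cyclic of order $2N=|H|>4g+4$. This contradicts \eqref{eq:abelianbound} at once. The paper phrases it through Theorem~\ref{thm:cyclic}: a cyclic group of even order at least $2g+1$ lies in case~\textup{(II)} or~\textup{(III)}. That forces $N=2g+1$ or $N=g+1$, both incompatible with $N>2g+2$.

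Your first route can also be completed, with more work. Once the branch locus of the hyperelliptic cover lies in $\{0,\infty\}$, write the cover as $y^2+y=f(x)$, with $f$ a Laurent polynomial having only odd nonzero exponents. Lemma~\ref{lem:ASlift}, applied to $x\mapsto\zeta_Nx$, forces $\zeta_N^i=1$ for every exponent $i$ of $f$. This is impossible because $0<|i|\le2g+1<N$.
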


\begin{theorem}\label{thm:dihedral}
If $D\le\Aut(\XX)$ is dihedral, then
\[
 |D|\le\begin{cases}4g+4,&g\text{ even},\\4g,&g\text{ odd}.\end{cases}
\]
The equality cases are precisely the curves
\[
 \begin{array}{ll}
 y^2+y=c/(x^{g+1}+1),\quad c\in K^*,&g\text{ even},\\[2pt]
 y^2+y=c(x^g+x^{-g}),\quad c\in K^*,&g\text{ odd}.
 \end{array}
\]
For every curve in the first family, $\Aut(\XX)$ is the dihedral group of order $4g+4$ displayed in the proof. For every curve in the second family, $\Aut(\XX)$ is exactly the dihedral group of order $4g$. The first family is ordinary, while the second has $2$-rank one. Thus the respective bounds are attained in every even genus $g\ge2$ and every odd genus $g\ge3$.
\end{theorem}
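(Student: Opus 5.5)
The approach is to reduce to the cyclic subgroup $R=\langle r\rangle$ of order $n=|D|/2$ and use Theorem~\ref{thm:cyclic} and Theorem~\ref{thm:index}. Suppose $|D|>4g$ when $g$ is odd, or $|D|>4g+4$ when $g$ is even. If $|D|>4g+4$, Theorem~\ref{thm:index} applies to $H=D$ with $G=R$. It gives $H\cong C_k\times D_{2M}$ with $k\ge3$, which is not dihedral: its centre contains $C_k$, while a dihedral group of order $2n$ with $n$ odd has trivial centre. So $|D|\le4g+4$ always. It remains to show two things: equality $|D|=4g+4$ forces $g$ even and the first family, and in odd genus $|D|=4g+2$ is impossible while $|D|=4g$ forces the second family.

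\textbf{Case $|D|=4g+4$.} Here $n=2g+2\ge2g+1$, so Theorem~\ref{thm:cyclic} applies to $R$. Since $n$ is even, $R$ is in case (II) or (III). Case (II) has $n=4g+2\ne 2g+2$, so $R$ is in case (III) with $m=g+1$ odd, hence $g$ is even. The curve is then $y^2+y=c/(x^m+1)$. Next I would extend $R$ by a reflection $t$. The reflection normalizes $\langle r^2\rangle\cong C_m$ and the Artin--Schreier involution $r^m$, so it acts on the quotient $\PP^1=\XX/\langle r^m\rangle$. There it normalizes $x\mapsto\zeta x$, so it has the form $x\mapsto \lambda/x$. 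It must preserve the branch set $\{x^m=1\}$, which forces $\lambda^m=1$; I would normalize to $\lambda=1$. One checks $c/(x^{-m}+1)=c+c/(x^m+1)$, and $c$ is absorbed by $y\mapsto y+\beta$ with $\beta^2+\beta=c$. This gives the explicit order-two lift $t$ and shows existence.

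\textbf{Odd $g$.} For $|D|=4g+2$ we have $n=2g+1$ odd. By Theorem~\ref{thm:cyclic}(I) the cover $\XX\to\XX/R$ is tame, with three branch points and exponents $(r,s,-r-s)$. A reflection acts on $\XX/R\cong\PP^1$ as an involution with a single fixed point, so, as in Theorem~\ref{thm:index}, it fixes one branch point and swaps the other two. The swap forces the exponent relation $s\equiv -r$ or $s\equiv r$ up to inversion, since the reflection inverts $r$. Feeding this into \eqref{eq:tamegenus}, I expect to get $2g+1\neq n$, or else a quotient incompatible with the dihedral relation; pinning down the exact obstruction is the step to work out.

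For $|D|=4g$ we have $n=2g<2g+1$, so Theorem~\ref{thm:cyclic} no longer applies directly. This is the main obstacle. I would instead apply Riemann--Hurwitz to the whole group $D$ of order $4g$, where each involution is wild, and enumerate the signatures. Most conjugacy-class configurations should be excluded by counting fixed points of reflections: each reflection has a single fixed point on a rational quotient, and the different exponent is at least $2$. The case that should survive is the following. The quotient $\XX/\langle t\rangle$ is rational, and $\XX$ is an Artin--Schreier cover $y^2+y=f(x)$ with $f$ having poles at $0$ and $\infty$ of order $g$. Invariance under $x\mapsto\zeta_g x$ then gives $f=c(x^g+x^{-g})$. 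The $2$-rank is $\gamma=(\#\text{poles})-1=1$ by Deuring--Shafarevich, and for the first family there are $m=g+1$ simple poles, so $\gamma=g$.

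\textbf{Full automorphism groups.} For the first family, any larger group would contain $D$ properly with $|\Aut(\XX)|\ge 8g+8$. I would rule this out using the bounds above: the known bounds for groups of even order containing a large wild cyclic group, or the observation that $\langle r\rangle$ would have to be normalized, which leads back to Theorem~\ref{thm:cyclic}. For the second family, a larger group would have order $\ge8g$. Using that the curve has $2$-rank one, a Sylow $2$-subgroup acts with restricted structure by Nakajima's bound, and this should pin $\Aut(\XX)=D$.
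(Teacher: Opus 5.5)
Your handling of $|D|>4g+4$ is sound: the centre of $C_k\times D_{2M}$ contains $C_k$, so it is not dihedral. Your treatment of $|D|=4g+4$ is also sound, since case~(III) of Theorem~\ref{thm:cyclic} already gives the first family; the paper instead identifies $D$ with the normalizer from Proposition~\ref{prop:normalizers}.

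You leave the exclusion of $|D|=4g+2$ open, but it is one line from what you wrote. A reflection inverts the rotation generator, so Lemma~\ref{lem:kummernormalform} gives $t(y)=h(x)y^{-1}$. Hence the residues at the swapped points satisfy $s\equiv-r\pmod n$; the alternative $s\equiv r$ would mean that $t$ centralizes $R$. The third residue $-r-s$ is then $0$ modulo $n$, so the cover is unramified there, contradicting the three branch points of case~(I).

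\textbf{Gap 1: the odd-genus equality case $|D|=4g$.} This is the substance of the word ``precisely'', and you only state what you expect to survive. The configuration you describe is also wrong as written: for a reflection $t$ the quotient $\XX/\langle t\rangle$ is not rational. On $y^2+y=c(x^g+x^{-g})$ the reflection $(x,y)\mapsto(x^{-1},y)$ has quotient $y^2+y=cP_g(x+x^{-1})$, of genus $(g-1)/2$. The rational degree-two quotient comes from the hyperelliptic involution, which is the central rotation $r^g$.

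The missing idea is that Theorem~\ref{thm:cyclic} is not needed. Lemma~\ref{lem:rational} only requires $N>2g-2$, which $N=2g$ satisfies, so $\XX/R$ is rational with $\sum_Q\delta_Q=3-1/g$.
\begin{itemize}
\item If only one point is wildly ramified, a reflection fixes it. Its fiber has odd length $g/t$, so the reflection fixes a point $P$ of it. Then $D_P$ is dihedral of order $4t$, and normality of its Sylow $2$-subgroup forces $t=1$. Now \eqref{eq:threebranchwild} gives $g(j-1)=2$, which is impossible.
\item With two wild points, the budget and Lemma~\ref{lem:kummer} force both stabilizers to be $C_{2g}$ with $j_1+j_2=2g$. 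Lemma~\ref{lem:local}(1) then gives $j_1=j_2=g$. The intermediate Artin--Schreier equation over $K(x^g)$ is therefore $y^2+y=Az+B/z$, which normalizes to $c(x^g+x^{-g})$.
\end{itemize}

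\textbf{Gap 2: the full automorphism groups.} Your plan here is not an argument. Nothing forces $\langle r\rangle$ to be normal in $\Aut(\XX)$. A proper overgroup of $D$ of order at least $8g$ is not contradictory in itself. A Nakajima-type bound only constrains a Sylow $2$-subgroup.

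The step you are missing is that $\sigma:(x,y)\mapsto(x,y+1)$ is the hyperelliptic involution, hence central. So $\Aut(\XX)/\langle\sigma\rangle$ is a finite subgroup of $\mathrm{PGL}(2,K)$ preserving the branch set $B$ of $K(\XX)/K(x)$.
\begin{itemize}
\item In odd genus, $B=\{0,\infty\}$, and Lemma~\ref{lem:ASlift} forces $a^g=1$ for both $x\mapsto ax$ and $x\mapsto a/x$.
\item In even genus, the same lifting criterion shows that no element of odd order fixes a point of $B$, since these are simple poles of $f$. One then runs through the three cases of Lemma~\ref{lem:Dickson}, using that $|B|=g+1$ is odd and that odd-order elements act freely on $B$.
\end{itemize}
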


The proofs are based on a direct analysis of short orbits and their stabilizers by means of the Riemann--Hurwitz formula. Once the possible ramification configurations have been determined, the corresponding extensions are written explicitly by Kummer or Artin--Schreier equations. The characteristic-two case requires a separate estimate for higher ramification groups in order to exclude cyclic orders divisible by four. The equation of the family in Theorem~\ref{thm:index} is then obtained by studying the exponents of a Kummer extension of a rational function field.

All the facts used repeatedly in the proofs are collected in the preliminaries. In particular, we include the precise forms of the Kummer and Artin--Schreier arguments needed later, so that the numerical computations in the classification can be followed directly.

\section{Preliminaries}

Our notation and terminology are standard; see \cite[Chapter~11]{HKT}, \cite{DGTcyclic,DGTindex}. As in the introduction, $K(\XX)$ denotes the function field of $\XX$.
Let $G\le\Aut(\XX)$ be finite. The quotient curve $\XX/G$ is the nonsingular model of the fixed field $K(\XX)^G$, and the extension $K(\XX)/K(\XX)^G$ is Galois of degree $|G|$. We denote by
\[
 \Phi:\XX\longrightarrow\XX/G
\]
the corresponding Galois cover.

For $P\in\XX$, let $G_P$ be the stabilizer of $P$ in $G$. The point $P$ is a ramification point if $G_P\ne\{1\}$, and its ramification index is
\[
 e_P=|G_P|.
\]
The orbit of $P$ has length $|G|/|G_P|$. Hence the ramification points are precisely the points belonging to short orbits. If $Q=\Phi(P)$, we say that the cover ramifies over $Q$. Thus a short orbit is exactly a fiber over which the cover ramifies.

As in \cite{DGTcyclic,DGTindex}, we write $G_P^{(i)}$ for the $i$th ramification group at $P$. If $u$ is a local parameter at $P$, then
\[
 G_P^{(i)}=\{\sigma\in G_P:\ord_P(\sigma(u)-u)\ge i+1\}.
\]
In particular, $G_P^{(0)}=G_P$. The different exponent and the Riemann--Hurwitz formula are
\begin{equation}\label{eq:RH}
 d_P=\sum_{i\ge0}(|G_P^{(i)}|-1),\qquad
 2g-2=|G|(2g(\XX/G)-2)+\sum_{P\in\XX}d_P.
\end{equation}
For each point $Q\in\XX/G$ over which the cover ramifies, choose $P\in\Phi^{-1}(Q)$, put $e_Q=|G_P|$, and set $\delta_Q=d_P/e_Q$. The fiber of $Q$ contains $|G|/e_Q$ points, and the different exponent is the same at all of them. Dividing \eqref{eq:RH} by $|G|$ therefore gives
\begin{equation}\label{eq:normalized}
 \frac{2g-2}{|G|}=2g(\XX/G)-2+\sum_Q\delta_Q.
\end{equation}
If $P$ is tamely ramified, then $G_P^{(i)}=\{1\}$ for $i\ge1$, and hence
\[
 d_P=e_Q-1,\qquad \delta_Q=1-\frac1{e_Q}.
\]
If $P$ is wildly ramified, then $|G_P^{(1)}|\ge2$. The terms corresponding to $i=0,1$ in the sum defining $d_P$ give
\[
 d_P\ge(e_Q-1)+(2-1)=e_Q,
\]
and consequently $\delta_Q\ge1$. These facts about ramification groups are recalled in \cite[Chapter~11]{HKT}.

We recall another standard property of a point stabilizer: $G_P^{(1)}$ is its unique Sylow $2$-subgroup and is therefore normal in $G_P$ \cite[Theorem 11.49]{HKT}. 

We shall also use the following bound for abelian automorphism groups:
\begin{equation}\label{eq:abelianbound}
 |A|\le4g+2
\end{equation}
for every abelian subgroup $A\le\Aut(\XX)$ in characteristic two; see \cite[Theorem~11.79]{HKT}.

In a least common multiple, a hat over an entry means that this entry is omitted. Thus
\[
 \lcm(e_1,\ldots,\widehat e_i,\ldots,e_v)
\]
is the least common multiple of all the $e_j$ with $j\ne i$.

\begin{lemma}\label{lem:kummer}
Let $K(\YY)/K(\ZZ)$ be a cyclic Kummer extension of odd degree $m$. It cannot ramify at exactly one point of $\ZZ$. Moreover, if $\ZZ$ is rational and the ramification indices are $e_1,\ldots,e_v$, then
\[
 \lcm(e_1,\ldots,e_v)=m,
 \qquad \lcm(e_1,\ldots,\widehat e_i,\ldots,e_v)=m
\]
for every $i$. In particular, if the extension ramifies over exactly two points, both ramification indices are equal to $m$ and $\YY$ is rational. If it ramifies over exactly three points, the integers $d_i=m/e_i$ are pairwise coprime.
\end{lemma}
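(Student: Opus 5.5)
Since $m$ is odd, hence prime to the characteristic, $K$ contains a primitive $m$th root of unity and Kummer theory applies: $K(\YY)=K(\ZZ)(y)$ with $y^m=f$ for some $f\in K(\ZZ)$, where the class of $f$ in $K(\ZZ)^*/K(\ZZ)^{*m}$ has order exactly $m$. The plan is to read everything off the orders of $f$ at the points of $\ZZ$. For a point $Q\in\ZZ$ put $n_Q=\ord_Q(f)$. Since the extension is tame, the standard Kummer computation (see \cite[Chapter~11]{HKT}) gives
\[
 e_Q=\frac{m}{\gcd(m,n_Q)}.
\]
The cover therefore ramifies exactly at the points $Q$ with $m\nmid n_Q$. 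Throughout we use $\sum_Q n_Q=\deg(\operatorname{div} f)=0$.

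First, suppose the extension ramifies at exactly one point $Q_1$. Then $m\mid n_Q$ for every $Q\ne Q_1$, so $n_{Q_1}=-\sum_{Q\ne Q_1}n_Q\equiv0\pmod m$, and $Q_1$ is unramified after all, a contradiction. The same argument, run modulo a divisor of $m$, gives the lcm statements. Suppose $\ZZ$ is rational, and let $d$ be the lcm of the $e_j$ with $j\ne i$. Then $d\mid m$ and $m/d$ divides $n_Q$ for every $Q\ne Q_i$, because at unramified points $m\mid n_Q$, and at ramified points $e_Q\mid d$ forces $\tfrac md\mid\gcd(m,n_Q)$. The degree-zero relation then gives $\tfrac md\mid n_{Q_i}$ as well. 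So every $n_Q$ is divisible by $m/d$. Since $\ZZ$ is rational, $\operatorname{div}(f)=\tfrac md D$ with $D$ a degree-zero divisor, which is principal, say $D=\operatorname{div}(h)$. Hence $f=c\,h^{m/d}$ with $c\in K^*$, and $c$ is an $m$th power because $K$ is algebraically closed.

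In $K(\ZZ)^*/K(\ZZ)^{*m}$ the class of $f$ is then a power of an element with exponent $m/d$. Its order divides $d$, so $m\mid d$ and $d=m$. The unhatted statement $\lcm(e_1,\dots,e_v)=m$ follows at once, since it is sandwiched between this lcm and $m$ (or by the same argument applied directly). This rationality step, which converts "all orders divisible by $m/d$" into "$f$ is an $(m/d)$th power up to $m$th powers", is the only real input. It is the place that needs care, because it fails for nonrational $\ZZ$, where torsion in the Picard group appears.

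For two branch points, the hatted condition leaves a single index in each lcm, so $e_1=e_2=m$. Riemann--Hurwitz \eqref{eq:RH} with tame ramification then gives $2g(\YY)-2=-2m+2(m-1)=-2$, so $g(\YY)=0$ and $\YY$ is rational. For three branch points, suppose a prime $\ell$ divides both $d_i=m/e_i$ and $d_j=m/e_j$, and let $\ell^a$ be the exact power of $\ell$ dividing $m$. Then $\ell^a\nmid e_i$ and $\ell^a\nmid e_j$. So the lcm of the two remaining indices (omitting the third) has $\ell$-part below $\ell^a$ and cannot equal $m$, a contradiction. Hence the $d_i$ are pairwise coprime.
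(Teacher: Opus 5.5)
Your proof is correct, and it takes a different route from the paper's. The paper argues group-theoretically. For a fixed branch point $Q_i$ it lets $B_i$ be the subgroup generated by the stabilizers above all the other branch points. If $B_i$ were proper, then $\YY/B_i\to\ZZ\cong\PP^1$ would be a tame cover of degree $d>1$ branched only over $Q_i$, which Riemann--Hurwitz excludes via $2g-2\le-2d+(d-1)<-2$. The lcm conditions then follow because subgroups of orders $e_j$ generate a cyclic group of order $m$ exactly when $\lcm$ of the $e_j$ is $m$. For the one-point statement over an arbitrary $\ZZ$, the paper simply imports the result from earlier work.

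You instead work with an explicit Kummer generator $f$ and its divisor. The degree-zero relation $\sum_Q n_Q=0$ gives the one-point claim in one line, for every $\ZZ$. Combined with the principality of degree-zero divisors on $\PP^1$, the same relation shows that $f$ is an $(m/d)$th power modulo $m$th powers, which forces $d=m$.

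What your version buys is self-containedness: no external input is needed even for nonrational $\ZZ$. It also isolates exactly where rationality is used, namely the absence of torsion in $\operatorname{Pic}^0(\ZZ)$. The paper's version never writes down an equation or a valuation. Its key step, that the stabilizers over all but one branch point generate the whole group, uses only Riemann--Hurwitz and works for any tame Galois cover of $\PP^1$, since $B_i$ is automatically normal there.

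The final deductions for two and three branch points are the same in both proofs.
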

\begin{proof}
The impossibility of ramification over exactly one point is standard; see \cite[Proposition~10]{DGTcyclic}. Its proof uses only that the extension is tame and therefore applies also in characteristic two.

Assume now that $\ZZ$ is rational. Fix one of the ramification points, say $Q_i$, and let $B_i$ be the subgroup of $\operatorname{Gal}(K(\YY)/K(\ZZ))$ generated by the stabilizers above all the other ramification points. Suppose that $B_i$ is a proper subgroup. Then the cover
\[
 \YY/B_i\longrightarrow\ZZ\cong\PP^1
\]
has degree $d>1$ and can ramify only over $Q_i$. Since it is tame, the total different over $Q_i$ is at most $d-1$. Riemann--Hurwitz would then give
\[
 2g(\YY/B_i)-2\le-2d+(d-1)=-d-1<-2,
\]
which is impossible. Hence $B_i$ is the whole cyclic group.

In a cyclic group of order $m$, subgroups of orders $e_j$, $j\ne i$, generate the whole group if and only if their orders have least common multiple $m$. Thus
\[
 \lcm(e_1,\ldots,\widehat e_i,\ldots,e_v)=m.
\]
The first least-common-multiple equality follows as well. If $v=2$, both indices are $m$, and Riemann--Hurwitz gives
\[
 2g(\YY)-2=-2m+2(m-1)=-2.
\]
Thus $\YY$ is rational. Finally, let $v=3$. If a prime divided both $d_i=m/e_i$ and $d_j=m/e_j$, then $\lcm(e_i,e_j)<m$, contrary to the equality obtained by omitting the third index. Hence the $d_i$ are pairwise coprime.
\end{proof}

\begin{lemma}\label{lem:P1involution}
Let $\tau$ be a nontrivial involution of a rational curve in characteristic two. Then $\tau$ has exactly one fixed point. After a suitable choice of a coordinate $x$, it has the form
\[
 \tau(x)=x+1.
\]
\end{lemma}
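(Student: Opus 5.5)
We identify the rational curve with $\PP^1$ and regard $\tau$ as an element of order two in $\operatorname{PGL}(2,K)$. Concretely, $\tau$ is represented by a matrix $A\in\operatorname{GL}(2,K)$ with $A\notin K^*I$ and $A^2=\lambda I$ for some $\lambda\in K^*$. Since $K$ is algebraically closed, we may rescale $A$ by $\lambda^{-1/2}$ and assume $A^2=I$.

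The key step uses characteristic two. Here $A^2-I=(A-I)^2$, so $A-I$ is a nonzero nilpotent $2\times2$ matrix. It therefore has rank one and satisfies $\ker(A-I)=\operatorname{im}(A-I)$. Consequently $A$ has the single eigenvalue $1$, with a one-dimensional eigenspace, and is conjugate in $\operatorname{GL}(2,K)$ to the Jordan block
\[
\begin{pmatrix}1&1\\0&1\end{pmatrix}.
\]
The fixed points of $\tau$ on $\PP^1$ are exactly the eigenlines of $A$, so $\tau$ has exactly one fixed point.

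For the normal form, the conjugating matrix gives a change of coordinate on $\PP^1$, that is, a choice of a new coordinate $x$ with $K(\PP^1)=K(x)$. In this coordinate $\tau$ acts by the Jordan block above, which means $\tau(x)=x+1$. Equivalently, we can argue geometrically: choose $x$ so that the fixed point is the point at infinity. Then $\tau$ fixes $\infty$ and is an affine map $x\mapsto ax+b$. The condition $\tau^2=1$ gives $a^2=1$, hence $a=1$ in characteristic two, and $b\neq0$ because $\tau$ is nontrivial. Replacing $x$ by $x/b$ then gives $\tau(x)=x+1$.

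The only point needing care is the reduction to $A^2=I$, which requires the existence of square roots in $K$. Everything else is immediate, so we expect no real obstacle.
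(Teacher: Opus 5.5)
Your proof is correct. The paper gives no argument of its own for this lemma. It simply cites the standard description of involutions in $\operatorname{PGL}(2,K)$, namely the classification of finite subgroups of $\operatorname{PGL}(2,K)$ (Theorem~11.91 in HKT). It uses the same source for Lemmas~\ref{lem:P1cyclic} and~\ref{lem:Dickson}.

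Your route is a direct, self-contained verification. After normalizing to $A^2=I$, the characteristic-two identity $A^2-I=(A-I)^2$ forces $A$ to be unipotent and non-scalar. Hence $A$ is conjugate to a single Jordan block, which gives both the unique fixed point and the normal form $x\mapsto x+1$ at once.

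What your version buys is transparency about exactly where characteristic two enters. In odd characteristic, $A^2=I$ with $A\neq\pm I$ gives a diagonalizable matrix with eigenvalues $\pm1$, hence two fixed points; this is the contrast the paper stresses in its introduction. What the citation buys is brevity, and uniformity with the other facts about $\operatorname{PGL}(2,K)$ that the paper takes from the same classification.

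Your alternative affine argument is also fine: move the fixed point to $\infty$, deduce $a=1$ from $a^2=1$, and rescale by $b$. It relies on the existence of a fixed point, which your matrix argument has already supplied.
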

This is the standard description of involutions in $\operatorname{PGL}(2,K)$ in characteristic two; see \cite[Theorem~11.91]{HKT}.

\begin{lemma}\label{lem:P1cyclic}
Let $C$ be a cyclic group of odd order $n>1$ acting on a rational curve. Then $C$ fixes exactly two points. After choosing them as $0$ and $\infty$, a generator of $C$ has the form $x\mapsto\zeta_nx$. If an involution $t$ normalizes $C$ and satisfies $t\rho t=\rho^{-1}$ for every $\rho\in C$, then it exchanges $0$ and $\infty$ and has the form $x\mapsto c/x$ for some $c\in K^*$.
\end{lemma}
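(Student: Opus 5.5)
The plan is to realize $C$ inside $\operatorname{PGL}(2,K)\cong\Aut(\PP^1)$ and diagonalize a generator. Let $\rho$ generate $C$ and lift it to a matrix $A\in\mathrm{GL}(2,K)$. Since $\rho$ has odd order $n$, some scalar multiple of $A$ satisfies $A^n=\lambda I$, and rescaling $A$ by an $n$th root of $\lambda$ gives $A^n=I$. The polynomial $X^n-1$ is separable in characteristic two because $n$ is odd, so $A$ is diagonalizable. Its eigenvalues are distinct, since otherwise $A$ would be scalar and $\rho$ trivial. In the eigenbasis $\rho$ becomes $x\mapsto\mu x$, where $\mu$ is a ratio of $n$th roots of unity. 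Because $\rho$ has order exactly $n$, $\mu$ is a primitive $n$th root of unity, and we may call it $\zeta_n$.

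Next I count fixed points. The map $x\mapsto\zeta_n^jx$ with $\zeta_n^j\ne1$ fixes exactly $0$ and $\infty$, since $\zeta_n^jx=x$ forces $x=0$. So every nontrivial element of $C$ fixes exactly these two points. In particular $C$ fixes exactly two points, and after choosing them as $0$ and $\infty$ the generator has the stated form.

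For the involution, suppose $t\rho t=\rho^{-1}$. Then $t$ permutes the common fixed-point set $\{0,\infty\}$ of $C$, because $t$ conjugates $C$ onto itself. If $t$ fixed both points, it would have the form $x\mapsto cx$. An involution of this form needs $c^2=1$, so $c=1$ in characteristic two, and $t$ would be trivial. This also agrees with Lemma~\ref{lem:P1involution}, which says a nontrivial involution has only one fixed point. If $t$ swapped $0$ and $\infty$ in one direction only, it would fix one of them and move the other, contradicting that $t$ preserves the set $\{0,\infty\}$. Hence $t$ exchanges $0$ and $\infty$.

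A Möbius map with $t(0)=\infty$ and $t(\infty)=0$ has the form $x\mapsto c/x$. As a check, $t\rho t$ sends $x$ to $c/(\zeta_n c/x)=\zeta_n^{-1}x$, which is $\rho^{-1}$ as required. Every map $x\mapsto c/x$ is an involution, so nothing more is needed.

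No step is a real obstacle. The only point needing care is that diagonalizability uses that $n$ is odd, so that $X^n-1$ is separable in characteristic two.
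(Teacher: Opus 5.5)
Your proof is correct. It does not follow the paper, because the paper gives no argument for this lemma: it cites it as part of the standard description of finite subgroups of $\operatorname{PGL}(2,K)$ (Dickson's classification, via Theorem~11.91 of Hirschfeld--Korchm\'aros--Torres). That is the same source the paper uses for Lemma~\ref{lem:P1involution} and Lemma~\ref{lem:Dickson}.

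You instead give a self-contained linear-algebra proof:
\begin{enumerate}
\item normalize a lift so that $A^n=I$;
\item use the separability of $X^n-1$ to diagonalize $A$, which is exactly where the hypothesis $2\nmid n$ enters;
\item read off both the fixed points and the shape of $t$.
\end{enumerate}
This approach has two advantages. It is transparent. It also proves a little more than the statement asks: every nontrivial element of $C$ fixes exactly $0$ and $\infty$. Moreover, you exclude the case where $t$ fixes both points using $c^2=1\Rightarrow c=1$, so your argument shows that in characteristic two every nontrivial involution normalizing $C$ automatically has the form $x\mapsto c/x$ and therefore inverts $C$. The hypothesis $t\rho t=\rho^{-1}$ then serves only as a consistency check. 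Alternatively, that hypothesis alone excludes the diagonal case in any characteristic: diagonal maps commute with $\rho$, and $\rho\neq\rho^{-1}$ because $n>1$ is odd.

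The citation route costs the paper nothing, since the full classification is needed anyway for Lemma~\ref{lem:Dickson}.

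Two minor points of presentation. First, your sentence about $t$ swapping the points ``in one direction only'' can be dropped: $t$ induces a permutation of the two-element set $\{0,\infty\}$, which is either the identity or the transposition. Second, you implicitly assume the action is faithful, so that $C$ embeds in $\operatorname{PGL}(2,K)$. That is the intended reading and is how the lemma is used in the proof of Theorem~\ref{thm:index}.
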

This is the standard form of cyclic and dihedral subgroups of $\operatorname{PGL}(2,K)$; see again \cite[Theorem~11.91]{HKT}.

We shall also use the standard uniqueness property of the hyperelliptic involution. If a curve of genus at least two admits a degree-two map onto a rational curve, the corresponding involution is unique and is therefore central in the full automorphism group. Consequently every automorphism leaves its fixed field invariant and induces an automorphism of the rational quotient; the kernel of the induced action is generated by the hyperelliptic involution. See, for example, \cite[Chapter~11]{HKT}.

We shall use the following consequence of Dickson's classification only when determining the full automorphism groups of the extremal dihedral curves.

\begin{lemma}\label{lem:Dickson}
Let $A$ be a finite subgroup of $\operatorname{PGL}(2,K)$ which contains a dihedral group $D_{2m}$, where $m\ge3$ is odd. Then one of the following holds:
\begin{enumerate}
\item $A$ is dihedral of order $2n$, with $n$ odd;
\item $A=E\rtimes C_n$, where $E$ is a nontrivial elementary abelian $2$-group and $n$ is odd;
\item $A\cong\operatorname{PSL}(2,q)=\operatorname{PGL}(2,q)$ for some $q=2^e$.
\end{enumerate}
An elementary abelian $2$-subgroup of $\operatorname{PGL}(2,K)$ has one common fixed point and acts freely outside it. Moreover, in the standard action of $\operatorname{PSL}(2,q)$ on $\PP^1(K)$, the orbits have lengths
\[
 q+1,\qquad q^2-q,\qquad q^3-q.
\]
The first orbit is $\PP^1(\FF_q)$; the last length occurs for every point outside $\PP^1(\FF_{q^2})$.
\end{lemma}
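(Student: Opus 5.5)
The plan is to derive the lemma from Dickson's classification of finite subgroups of $\operatorname{PGL}(2,K)$ in characteristic two, as recorded in \cite[Theorem~11.91]{HKT}. First I would establish the two structural facts about elements. An element of order two is unipotent: it is conjugate to $x\mapsto x+1$ and fixes exactly one point (Lemma~\ref{lem:P1involution}). No element has order $4$, since a unipotent element of $\operatorname{PGL}(2,K)$ has order $2$. An element of odd order $n>1$ is semisimple: it is conjugate to $x\mapsto\zeta_n x$ and fixes exactly two points (Lemma~\ref{lem:P1cyclic}).

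Two distinct commuting involutions must share their unique fixed point. Hence a $2$-subgroup $E$ fixes a common point, which we move to $\infty$. Then $E$ consists of translations $x\mapsto x+b$ and is elementary abelian. Since a translation $x\mapsto x+b$ with $b\ne0$ has no fixed point in the affine line, $E$ acts freely outside $\infty$. This gives the statement about elementary abelian $2$-subgroups.

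Next I would run through Dickson's list in characteristic two. It consists of the cyclic groups of odd order, the dihedral groups $D_{2n}$, the groups $E\rtimes C_n$ with $n$ odd (the stabilizer of a point in a finite group), and the groups $\operatorname{PSL}(2,q)=\operatorname{PGL}(2,q)$ with $q=2^e$. There are no exceptional $A_4$, $S_4$ or $A_5$ cases: $A_4\cong E_4\rtimes C_3$, $A_5\cong\operatorname{PSL}(2,4)$, and $S_4$ contains an element of order $4$. Since $A\supseteq D_{2m}$ is nonabelian, $A$ is not cyclic.

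If $A$ is dihedral of order $2n$ with $n$ even, I would show that $n=2$, so that $A\cong E_4$, which cannot contain $D_{2m}$. For $n>2$ even, $n$ is not divisible by $4$, since a rotation of order $4$ would have order $4$ in $\operatorname{PGL}(2,K)$. Write $n=2n'$ with $n'>1$ odd. Then the central involution $r^{n'}$ commutes with $r^2$, which has odd order. But a translation $x\mapsto x+c$ fixing $\infty$ cannot commute with a nontrivial map $x\mapsto\zeta x$, because it would have to preserve the fixed-point pair $\{0,\infty\}$ while fixing only $\infty$. This contradiction leaves exactly cases (1)--(3).

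For the orbit lengths in $\operatorname{PSL}(2,q)$, of order $q^3-q$, I would compute stabilizers.
\begin{itemize}
\item A point of $\PP^1(\FF_q)$ has the Borel subgroup of order $q(q-1)$ as stabilizer, and $\operatorname{PGL}(2,q)$ is transitive on $\PP^1(\FF_q)$. This gives the orbit $\PP^1(\FF_q)$ of length $q+1$.
\item A point of $\PP^1(\FF_{q^2})\setminus\PP^1(\FF_q)$ has as stabilizer a nonsplit torus of order $q+1$. The group acts transitively on these $q^2-q$ points, giving length $q^2-q$.
\item Every nontrivial element of $\operatorname{PGL}(2,q)$ has an eigenvector over $\FF_{q^2}$, because its characteristic polynomial has degree two over $\FF_q$. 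So all its fixed points lie in $\PP^1(\FF_{q^2})$. Points outside $\PP^1(\FF_{q^2})$ therefore have trivial stabilizer and orbit length $q^3-q$.
\end{itemize}

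The main obstacle is being careful that Dickson's list in characteristic two is quoted in the correct form, including the statement that $\operatorname{PSL}(2,q)=\operatorname{PGL}(2,q)$ when $q$ is even. The rest is elementwise verification.
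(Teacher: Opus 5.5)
Your proposal is correct and follows the same route as the paper, which gives no separate proof and simply invokes Dickson's classification as recorded in \cite[Theorem~11.91]{HKT}. You also supply the routine checks that the paper leaves to that citation: no elements of order four, exclusion of the even dihedral groups via an involution commuting with an element of odd order, and the stabilizer computations giving the orbit lengths $q+1$, $q^2-q$ and $q^3-q$; all of these are sound.
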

These statements are part of the standard classification of finite subgroups of $\operatorname{PGL}(2,K)$; see \cite[Theorem~11.91]{HKT}. Notice that the last two orbit lengths are even, whereas $q+1$ is odd.

\begin{lemma}\label{lem:kummernormalform}
Let $K(\YY)/K(x)$ be a cyclic Kummer extension of odd degree $n$. Assume that it ramifies only over $a_1,\ldots,a_v\in\PP^1$. Then it can be written in the form
\[
 y^n=f(x),
\]
where the zeros and poles of $f$ are contained in $\{a_1,\ldots,a_v\}$. Let $c_i=\ord_{a_i}(f)$, where $\ord_{a_i}(f)$ denotes the order of the zero of $f$ at $a_i$, or the negative of the order of its pole. Then
\[
 e_i=\frac{n}{\gcd(n,c_i)}
\]
is the ramification index over $a_i$, and $\sum_i c_i\equiv0\pmod n$. Replacing $y$ by $y^u$ with $\gcd(u,n)=1$ multiplies all the residues $c_i\pmod n$ by $u$.

The residues $c_i$ determine $f$ up to multiplication by an $n$th power in $K(x)$ and by a nonzero constant. More precisely, if two rational functions have the same orders modulo $n$ at every point of $\PP^1$, their quotient is $c h(x)^n$ for some $c\in K^*$ and $h(x)\in K(x)$.

Let $\rho$ be given by $\rho(y)=\zeta_ny$. If an automorphism $\tau$ of $\YY$ leaves $K(x)$ invariant and satisfies $\rho\tau=\tau\rho^a$, then
\[
 \tau(y)=h(x)y^a
\]
for some $h(x)\in K(x)$, where the exponent is read modulo $n$. In particular, if $\tau$ centralizes $\rho$, then $\tau(y)=h(x)y$.
\end{lemma}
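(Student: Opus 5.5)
The plan is to run the usual Kummer theory argument in the tame setting; $n$ is odd, so $x\mapsto x^n$ type arguments and roots of unity of order $n$ are available in characteristic two. Since $K$ is algebraically closed and $\gcd(n,2)=1$, $K$ contains a primitive $n$th root of unity $\zeta_n$. Then every cyclic extension of degree $n$ of $K(x)$ is of the form $K(x)(y)$ with $y^n=f(x)\in K(x)$, and $f$ is not an $e$th power for any divisor $e>1$ of $n$. This is Hilbert 90 applied to a generator $\rho$ of the Galois group: take $y$ to be an eigenvector of $\rho$ with eigenvalue $\zeta_n$, which exists because $\rho$ is diagonalizable, being of order prime to the characteristic.

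First compute the ramification at a place $a$ with $c=\ord_a(f)$, using the standard local computation. Write $f=u\,t^{c}$ with $t$ a local parameter and $u$ a unit. Since $u$ has an $n$th root in the completion, the local extension is generated by an $n$th root of $t^c$, which has degree $n/\gcd(n,c)$ and is totally ramified. Hence $e_a=n/\gcd(n,c)$. In particular the cover is unramified at $a$ exactly when $n\mid c$. We want the zeros and poles of $f$ to lie in $\{a_1,\dots,a_v\}$, but at an unramified point we only know $n\mid\ord_a(f)$. So multiply $f$ by $h^n$, with $h\in K(x)$ chosen to cancel those orders: this is possible because $K(x)$ is rational, so any divisor with prescribed orders at finitely many points, together with a compensating order at one fixed ramified point, is the divisor of some $h$. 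Replacing $y$ by $yh$ gives the normal form. The sum $\sum c_i=\deg(\mathrm{div} f)=0$ gives the congruence modulo $n$; in fact it gives equality before we reduce. Replacing $y$ by $y^u$ gives $(y^u)^n=f^u$, which multiplies every $c_i$ by $u$, and $y^u$ still generates the field since $u$ is invertible mod $n$.

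For uniqueness, suppose $f$ and $f'$ have the same orders modulo $n$ everywhere. Then $\mathrm{div}(f/f')=nD$ for some divisor $D$ of degree $0$ on $\PP^1$. Since $\PP^1$ has trivial degree-zero Picard group, $D=\mathrm{div}(h)$, so $f/f'=c\,h^n$. This is the step needing the most care, but it is immediate on the rational curve.

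Finally take $\tau$ leaving $K(x)$ invariant with $\rho\tau=\tau\rho^a$. Compute $\rho(\tau(y))=\tau(\rho^a(y))=\tau(\zeta_n^a y)=\zeta_n^a\tau(y)$, using that $\tau$ fixes constants. So $\tau(y)$ lies in the $\zeta_n^a$-eigenspace of $\rho$ acting on $K(\YY)$ over $K(x)$. That eigenspace is $K(x)\,y^a$, because $K(\YY)=\bigoplus_{j}K(x)y^j$ with $\rho$ acting on $y^j$ by $\zeta_n^j$, and these eigenvalues are distinct. Hence $\tau(y)=h(x)y^a$. Taking $a=1$ gives the centralizing case. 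The main obstacle is only bookkeeping, namely checking that a primitive $n$th root exists and that the eigenvalues are distinct, both of which hold because $n$ is odd.
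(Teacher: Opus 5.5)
Your proposal is correct and follows the paper's proof. The uniqueness step, where $\mathrm{div}(f/f')=nD$ with $D$ of degree zero and hence principal on $\PP^1$, is the same as the paper's. So is the equivariance step: your eigenspace decomposition $K(\YY)=\bigoplus_j K(x)y^j$ is exactly the paper's coefficient comparison in the basis $1,y,\ldots,y^{n-1}$.

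The only difference is that you also prove the standard Kummer-theory facts: the existence of the generator via Hilbert~90, the local computation $e=n/\gcd(n,c)$, and clearing the orders at unramified points by an $n$th power. The paper simply cites these from Stichtenoth's book (Proposition~3.7.3).
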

\begin{proof}
The assertions about the equation and the ramification indices are standard facts from Kummer theory; see \cite[Proposition~3.7.3]{Stichtenoth}. We justify the two additional statements used later.

Suppose that $f_1$ and $f_2$ have the same orders modulo $n$ at every point. Then
\[
 (f_1/f_2)=nD
\]
for a divisor $D$ of degree zero on $\PP^1$. Every divisor of degree zero on $\PP^1$ is principal, so $D=(h)$ for some $h\in K(x)$. Therefore $f_1/f_2=ch^n$ for a constant $c\in K^*$.

Finally, write
\[
 \tau(y)=h_0(x)+h_1(x)y+\cdots+h_{n-1}(x)y^{n-1}.
\]
The relation $\rho\tau=\tau\rho^a$ gives
\[
 \sum_{i=0}^{n-1}\zeta_n^i h_i(x)y^i
 =\sum_{i=0}^{n-1}\zeta_n^a h_i(x)y^i.
\]
The representation in the basis $1,y,\ldots,y^{n-1}$ is unique. Hence $h_i=0$ for every $i\not\equiv a\pmod n$, and therefore $\tau(y)=h(x)y^a$.
\end{proof}

\begin{lemma}\label{lem:rational}
If $G$ is cyclic of order $N>2g-2$, then $\XX/G$ is rational. For the points $Q\in\XX/G$ over which the cover ramifies, the quantities $\delta_Q$ satisfy
\begin{equation}\label{eq:budget}
 \sum_Q\delta_Q=2+\frac{2g-2}{N}<3.
\end{equation}
Moreover, the stabilizer of some point contains the full Sylow $2$-subgroup of $G$.
\end{lemma}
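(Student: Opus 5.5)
We prove Lemma~\ref{lem:rational} with the normalized Riemann--Hurwitz formula \eqref{eq:normalized}, applied to the cyclic group $G$ of order $N>2g-2$.

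\emph{Rationality.} Put $\gamma=g(\XX/G)$. Since $g\ge2$, the left-hand side of \eqref{eq:normalized} satisfies
\[
 0<\frac{2g-2}{N}<1.
\]
The $\delta_Q$ are nonnegative, so \eqref{eq:normalized} gives $2\gamma-2<1$, hence $\gamma\le1$. If $\gamma=1$, then $\sum_Q\delta_Q=(2g-2)/N<1$. Each ramified $Q$ has $\delta_Q\ge1-1/e_Q\ge1/2$, and $\delta_Q\ge1$ if it is wild. So at most one point is ramified, and that point is tame.
\begin{itemize}
\item If no point ramifies, the sum is zero, forcing $2g-2=0$, which contradicts $g\ge2$.
\item If exactly one tame point $Q$ ramifies, let $G_Q$ be its stabilizer and $T\le G_Q$ a nontrivial subgroup of odd prime order. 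Since $G$ is abelian, $\XX/T\to\XX/G$ is Galois with group $G/T$, and $\XX\to\XX/T$ is a cyclic tame (Kummer) cover. Its ramification lies only over the preimages of $Q$, and these are permuted transitively by $G/T$. I would therefore argue more directly: the subgroup generated by all inertia groups is just $G_Q$, because all stabilizers over $Q$ coincide in an abelian group. Hence $\XX/G_Q\to\XX/G$ is unramified. An unramified cover of an elliptic curve has genus one, so $\XX/G_Q$ is elliptic. Then $\XX\to\XX/G_Q$ is cyclic and tame, and it ramifies at points of $\XX/G_Q$ lying in a single orbit. Riemann--Hurwitz for this cover gives $2g-2=\sum_P d_P$, which is consistent, so this route does not close the case on its own. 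The cleaner route is to reduce to a cyclic group of prime order $\ell\mid|G_Q|$, where $G_Q$ is cyclic of order $e$. The cover $\XX\to\XX/\langle\sigma\rangle$, with $\sigma$ of order $\ell$, is Kummer of degree $\ell$. Over an elliptic base, a cyclic Kummer cover of prime degree cannot ramify at exactly one point. This is the first statement of Lemma~\ref{lem:kummer}, and its proof works over any base: the sum of the exponents $c_i$ must be divisible by $\ell$. Apply it to the $N/e$ points over $Q$, seen on the quotient by $\langle\sigma\rangle$. I expect this case to be the main obstacle, since all fixed points of $\sigma$ lie over a single orbit, which may contain several points. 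To finish it I would use the divisor-class argument: $\XX/G_Q\to\XX/G$ is an unramified cyclic cover of degree $N/e$ of an elliptic curve, and the Kummer condition must be examined on it. Alternatively, use the bound $N\le4g+2$, or simply note that the normalized formula gives $(2g-2)/N=\delta_Q=1-1/e$, so $N(e-1)=e(2g-2)$. Combined with the Kummer obstruction over the elliptic curve $\XX/G_Q$ (a single $G_Q$-orbit of fixed points, i.e.\ $N/e$ points each of index $e$), the degree-zero condition $\sum c_i\equiv0$ with all $c_i$ equal yields $(N/e)c\equiv0\pmod e$. I would work this out as the key computation.
\end{itemize}
Once the case $\gamma=1$ is excluded, $\gamma=0$, and \eqref{eq:normalized} gives \eqref{eq:budget} directly. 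The bound $<3$ follows from $2g-2<N$.

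\emph{Sylow statement.} Write $N=2^a n$ with $n$ odd, and let $S$ be the Sylow $2$-subgroup of $G$, assuming $a\ge1$. The quotient $\XX/S\to\XX/G$ is Galois with odd group $G/S$. Suppose no stabilizer contains $S$. Then every inertia group meets $S$ in a proper subgroup, so the subgroup of $S$ generated by the $2$-parts of the inertia groups, which is cyclic, is a proper subgroup $S'<S$. Consequently $\XX/(S'\times C_n)\to\XX/G\cong\PP^1$ is an unramified cover of degree $|S/S'|>1$. This is impossible, since $\PP^1$ has no nontrivial unramified covers (Riemann--Hurwitz gives $2g'-2=-2d$). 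Hence some stabilizer contains $S$.
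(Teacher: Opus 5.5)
Most of your proof is correct. This covers the exclusion of $\gamma\ge2$, the subcases of $\gamma=1$ with no ramified point, with a wild point, or with two ramified points, and the budget identity. Your Sylow argument (generation of $G$ by stabilizers, the chain of cyclic $2$-subgroups, no unramified covers of $\PP^1$) is essentially the paper's. The genuine gap is the remaining case: $\gamma=1$ with exactly one ramified point $Q$, which is tame. You list several routes but close none of them, ending with ``I would work this out as the key computation''.

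The computation you propose cannot close this case. On $\XX/G_Q$ the branch locus is a whole orbit of $N/e$ points. Even granting that all residues equal some $c$ prime to $e$, the degree condition only gives $e\mid N/e$, which is not contradictory. For example, $N=9$, $e=3$, $g=4$ satisfies $N(e-1)=e(2g-2)$, $N>2g-2$ and $3\mid 9/3$. The bound $N\le4g+2$ does not help either, since $9\le18$. The same obstruction defeats the quotient by $\langle\sigma\rangle$, as you noticed yourself.

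The missing idea is to apply the one-point obstruction on the base $\XX/G$, where there really is only one branch point, to the largest odd-degree quotient of the cover. Let $S$ be the Sylow $2$-subgroup of $G$ and $m=[G:S]$. The stabilizers are tame, hence of odd order, so $G_P\cap S=\{1\}$. Therefore $\XX/S\to\XX/G$ is cyclic of odd degree $m$, hence Kummer: $w^m=f$ with $f\in K(\XX/G)$. It ramifies over $Q$ with index $e\ge3$ and nowhere else. Thus $m\mid\ord_{Q'}(f)$ for every $Q'\ne Q$. Since $\deg(f)=0$, this forces $m\mid\ord_Q(f)$, so the cover would be unramified at $Q$, a contradiction.

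This is exactly the first assertion of Lemma~\ref{lem:kummer}, which you correctly observed holds over any base; you applied it to the wrong cover. Passing to $\XX/S$ is needed because $\XX\to\XX/G$ may have even degree, and then it is not Kummer in characteristic two. Pulling back to $\XX/G_Q$ or $\XX/\langle\sigma\rangle$ instead spreads the single branch point into an orbit, and the degree condition loses its force. This is also how the paper closes the case.
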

\begin{proof}
Set $h=g(\XX/G)$. Since $N>2g-2$, we have $0<(2g-2)/N<1$. An unramified cover would give $(2g-2)/N=2h-2$, an integer, which is impossible. Thus the cover is ramified. By \eqref{eq:normalized},
\[
 0<\frac{2g-2}{N}=2h-2+\sum_Q\delta_Q<1.
\]
If $h\ge2$, the term $2h-2$ is at least two, which is impossible. Suppose that $h=1$. Then
\[
 0<\sum_Q\delta_Q<1.
\]
If the cover is wildly ramified at a point $P$, then $\delta_Q\ge1$ for $Q=\Phi(P)$, which is impossible. Hence the cover would have to be tame.

Write $N=2^a m$, with $m$ odd, and let $S\le G$ be the subgroup of order $2^a$. Put
\[
 \YY=\XX/S,\qquad \ZZ=\XX/G.
\]
We then have a tower
\[
 \XX\longrightarrow\YY\longrightarrow\ZZ,
\]
and $\YY\to\ZZ$ is cyclic of odd degree $m$.

If $m=1$, then $G=S$ is a $2$-group. Since $\XX\to\ZZ$ is tame, every point stabilizer is trivial. Thus the cover is unramified, a contradiction with what was proved above.

Assume $m>1$. The cover $\XX\to\ZZ$ is ramified and tame. Hence some point stabilizer has odd order greater than one, and its image gives a ramification point of $\YY\to\ZZ$. Thus $\YY\to\ZZ$ is ramified. By Lemma~\ref{lem:kummer}, it ramifies over at least two points. At each of them the ramification index is odd and at least three. Therefore these two points contribute at least
\[
 2\left(1-\frac13\right)=\frac43>1.
\]
This is again impossible. Therefore $h=0$, and \eqref{eq:budget} holds.

Let $B$ be the subgroup of $G$ generated by all the point stabilizers $G_P$. The cover $\XX/B\to\XX/G$ is unramified, because the image of every stabilizer in $G/B$ is trivial. If $B\ne G$, then $[G:B]\ge2$. Since $\XX/G$ is rational, Riemann--Hurwitz for this unramified cover gives
\[
 2g(\XX/B)-2=-2[G:B]\le-4,
\]
which is impossible. Therefore $B=G$. As $G$ is cyclic, its subgroups of $2$-power order form a chain. One of the stabilizers must consequently contain the entire Sylow $2$-subgroup of $G$.
\end{proof}

\begin{remark}\label{rem:ASreduction}
We shall use the reduced form of an Artin--Schreier equation; see \cite[Proposition~3.7.8]{Stichtenoth}. Starting from $y^2+y=f(z)$, replace $y$ by $y+h(z)$. The right-hand side becomes $f(z)+h(z)^2+h(z)$. If $f$ has a pole of even order $2r$, one can choose the leading term of $h$ so that the term of order $2r$ cancels. Repeating this operation gives an equation in which all pole orders are odd. If $Q$ is a pole of order $j$ in this reduced equation, then the different exponent at the point above $Q$ is $j+1$.
\end{remark}

Two equations $y^2+y=f(x)$ and $y^2+y=f'(x)$ define the same quadratic extension of $K(x)$ if and only if $f(x)+f'(x)=h(x)^2+h(x)$ for some $h(x)\in K(x)$. This gives the following standard lifting criterion.
\begin{lemma}\label{lem:ASlift}
Let $\XX$ be given by an Artin--Schreier equation
\[
 y^2+y=f(x),\qquad f(x)\in K(x),
\]
and let $\alpha$ be an automorphism of $K(x)$. Then $\alpha$ lifts to an automorphism of $K(\XX)$ if and only if there exists $h(x)\in K(x)$ such that
\begin{equation}\label{eq:ASlift}
 f(\alpha(x))+f(x)=h(x)^2+h(x).
\end{equation}
If \eqref{eq:ASlift} holds, the two lifts are
\[
 (x,y)\longmapsto(\alpha(x),y+h(x)),
 \qquad
 (x,y)\longmapsto(\alpha(x),y+h(x)+1).
\]
Moreover, every pole of a function of the form $h^2+h$ has even order.
\end{lemma}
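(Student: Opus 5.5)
The statement to prove is Lemma~\ref{lem:ASlift}: an automorphism $\alpha$ of $K(x)$ lifts to $K(\XX)$ exactly when \eqref{eq:ASlift} is solvable, the lifts are then the two displayed maps, and every pole of a function $h^2+h$ has even order. The plan is to use the fact recalled just before the statement: two Artin--Schreier equations define the same quadratic extension of $K(x)$ precisely when their right-hand sides differ by an element of the form $h^2+h$.

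\emph{Sufficiency.} Suppose \eqref{eq:ASlift} holds, and define $\sigma$ on $K(x)[y]$ by $x\mapsto\alpha(x)$ and $y\mapsto y+h(x)$. The defining relation must go to zero:
\[
 \sigma(y)^2+\sigma(y)+f(\alpha(x)) = y^2+y+h^2+h+f(\alpha(x)) = y^2+y+f(x),
\]
using characteristic two and \eqref{eq:ASlift}. Hence $\sigma$ induces an endomorphism of $K(\XX)$. It is injective and $K(\XX)$ has degree two over $K(x)$, which $\sigma$ maps onto itself, so $\sigma$ is an automorphism. Composing with the Artin--Schreier involution $y\mapsto y+1$ gives the second lift.

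\emph{Necessity.} Let $\sigma$ be an automorphism of $K(\XX)$ with $\sigma|_{K(x)}=\alpha$, and put $y'=\sigma(y)$. Applying $\sigma$ to the equation gives $y'^2+y'=f(\alpha(x))$, with $y'\in K(\XX)\setminus K(x)$. So $f(x)$ and $f(\alpha(x))$ define the same quadratic extension of $K(x)$, and by the criterion above their sum has the form $h^2+h$.

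To make this self-contained, write $y'=u+vy$ with $u,v\in K(x)$ and $v\ne0$. Substituting and using $y^2=y+f$ gives
\[
 u^2+u+v^2f + (v^2+v)\,y = f(\alpha(x)).
\]
Comparing coefficients of $y$ forces $v^2+v=0$, so $v=1$. The constant terms then give $f(\alpha(x))+f(x)=u^2+u$, so $h=u$.

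Any lift $\sigma$ has the form $y\mapsto y+u$ with $u^2+u=f(\alpha(x))+f(x)$. Two solutions $u$ differ by a root of $T^2+T$, that is, by $0$ or $1$, so there are exactly two lifts.

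\emph{Poles.} If $h$ has a pole of order $k\ge1$ at a point, then $h^2$ has a pole of order $2k>k$ there. Hence $h^2+h$ has a pole of order exactly $2k$, which is even. If $h$ has no pole at the point, neither does $h^2+h$.

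I expect no real obstacle. The only delicate point is uniqueness of lifts: it uses that $\sigma$ is determined by its restriction $\alpha$ and the image of $y$, and that the coefficient comparison forces $v=1$.
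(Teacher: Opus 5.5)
Your proof is correct and follows the same idea the paper relies on. The paper gives no separate argument: it derives the lemma from the criterion, stated just before it, that $y^2+y=f$ and $y^2+y=f'$ define the same extension of $K(x)$ if and only if $f+f'=h^2+h$. Your coefficient comparison in the basis $1,y$ makes that criterion self-contained, and it also shows there are exactly two lifts; your pole-order argument is the standard one.
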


We record the two standard estimates for higher ramification groups that will be used below.

\begin{lemma}\label{lem:breakestimates}
Suppose first that the first ramification group of a cyclic point stabilizer is $C_4$. Write its higher ramification groups in the form
\[
 C_4=G_P^{(1)}=\cdots=G_P^{(j_1)},
\]
\[
 G_P^{(j_1+1)}=\cdots=G_P^{(j_2)}=C_2,
 \qquad G_P^{(j_2+1)}=\{1\}.
\]
Thus $j_1$ is the largest integer $i$ for which $G_P^{(i)}$ has order four, and $j_2$ is the largest integer $i$ for which $G_P^{(i)}$ is nontrivial. Then
\[
 j_1\ge1,\qquad j_2\ge3j_1.
\]

Suppose next that a cyclic group of order eight is the point stabilizer. Define $r_1,r_2,r_3$ in the same way: its higher ramification groups have order eight up to $r_1$, order four from $r_1+1$ to $r_2$, order two from $r_2+1$ to $r_3$, and are trivial afterwards. Then
\begin{equation}\label{eq:breakbounds}
 r_1\ge1,\quad r_2\ge3r_1,\quad r_3\ge3r_2+2r_1.
\end{equation}
\end{lemma}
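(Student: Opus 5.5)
The plan is to translate both statements into the upper numbering and combine the Hasse--Arf theorem with the standard growth estimate for upper ramification jumps of cyclic $p$-extensions in characteristic $p$. Let $S$ be the cyclic $2$-group in question, so $S=C_4$ or $S=C_8$. In the $C_4$ case $S=G_P^{(1)}$ by hypothesis. In the $C_8$ case $S=G_P$, and $G_P^{(1)}$ is the Sylow $2$-subgroup of $G_P$, so again $G_P^{(1)}=S$. Since the filtration of $S$ by the $G_P^{(i)}$ starts at $i=1$ with the full group, the first lower jump is at least one. This gives $j_1\ge1$ and $r_1\ge1$ at once.

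\textbf{Upper jumps.} Let $u_1<u_2$, respectively $u_1<u_2<u_3$, be the upper ramification jumps obtained from Herbrand's function. Because $S$ is cyclic, hence abelian, the Hasse--Arf theorem says these are integers. Computing Herbrand's function $\varphi$ piecewise gives, for $C_4$,
\[
 u_1=j_1,\qquad u_2=j_1+\frac{j_2-j_1}{2},
\]
and for $C_8$,
\[
 u_1=r_1,\qquad u_2=r_1+\frac{r_2-r_1}{2},\qquad u_3=u_2+\frac{r_3-r_2}{4}.
\]
Equivalently,
\[
 j_2=j_1+2(u_2-u_1),
 \qquad
 r_2=r_1+2(u_2-u_1),
 \qquad
 r_3=r_2+4(u_3-u_2).
\]

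\textbf{Growth of upper jumps.} The key input is the standard estimate for a cyclic extension of degree $p^n$ of a local field of characteristic $p$:
\[
 u_{k+1}\ge p\,u_k
\]
for consecutive upper jumps. This is Schmid's theorem, proved through Witt vectors: the conductor of a Witt vector of length $k+1$ is at least $p$ times that of its truncation. It also appears as an exercise in Serre's \emph{Local Fields}, Ch.~IV, §3.

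I expect this growth estimate to be the main obstacle if a self-contained argument is wanted. My first attempt would be to cite Schmid's theorem directly. Failing that, I would prove it from the lower-numbering fact that for $s\in G_P^{(i)}\setminus G_P^{(i+1)}$ one has $s^2\in G_P^{(2i)}$ in characteristic two, obtained from the expansion of $s(u)-u$. One then converts that statement into the upper numbering.

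\textbf{Conclusion.} With $p=2$ and the growth estimate, the inequalities follow by arithmetic. For $C_4$,
\[
 j_2=j_1+2(u_2-u_1)\ge j_1+2u_1=3j_1 .
\]
For $C_8$ the same computation gives $r_2\ge3r_1$. Moreover $u_3-u_2\ge u_2$ and $u_2=(r_1+r_2)/2$, so
\[
 r_3\ge r_2+4u_2=r_2+2(r_1+r_2)=3r_2+2r_1 ,
\]
which is exactly \eqref{eq:breakbounds}.
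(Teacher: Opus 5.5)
Your argument is correct and is essentially the paper's route. The paper gives no proof beyond citing Schmid, Elder--Keating and Serre, Chapter~IV. Your conversion of the lower breaks through Herbrand's function, followed by Schmid's inequality $u_{k+1}\ge2u_k$, is exactly the computation behind those citations, and your formulas $u_2=(r_1+r_2)/2$ and $r_3=r_2+4(u_3-u_2)$ are right.

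One caution concerns your fallback. The estimate $s^2\in G_P^{(2i)}$ holds for every wild automorphism of $K[[u]]$, whether of finite order or not. It only yields $j_2\ge2j_1$. Even with the parity information from Hasse--Arf, this still permits $(j_1,j_2)=(3,7)$, that is, $u_2=5<6=2u_1$. So that route cannot replace Schmid's theorem. The missing factor is arithmetic: in the Artin--Schreier--Witt description, the second equation $y_1^2+y_1=x_1+y_0^3+y_0^2$ contains the term $y_0^3$, of odd pole order $3j_1$. You should therefore cite Schmid rather than try to derive the growth estimate from the lower-numbering bound.
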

These inequalities are standard consequences of the ramification theory of cyclic extensions; see \cite{Schmid}, \cite[Lemma~2.2 and Theorem~2.3]{EK}, and \cite[Chapter~IV]{Serre}.

We also recall a standard local fact for tame cyclic groups. If a cyclic group $T=\langle\sigma\rangle$ of odd order $t$ fixes a point $P$, then one can choose a local parameter $v$ at $P$ such that $\sigma(v)=\zeta_t v$, where $\zeta_t$ is a primitive $t$th root of unity. This is the usual linear form of a tame cyclic action at a fixed point; see \cite[Chapter~IV]{Serre}.

We shall also use transitivity of the different; see \cite[Chapter~III]{Serre}. Let
\[
 \XX\longrightarrow\YY\longrightarrow\ZZ
\]
be a tower of covers, and let $P\mapsto Q\mapsto R$ be corresponding points. Transitivity gives
\[
 d(P\mid R)=d(P\mid Q)+e(P\mid Q)d(Q\mid R).
\]
Since $e(P\mid R)=e(P\mid Q)e(Q\mid R)$, division by $e(P\mid R)$ gives
\begin{equation}\label{eq:towerdifferent}
 \frac{d(P\mid R)}{e(P\mid R)}
 =\frac{d(Q\mid R)}{e(Q\mid R)}
  +\frac{d(P\mid Q)}{e(P\mid R)}
 \ge\frac{d(Q\mid R)}{e(Q\mid R)}.
\end{equation}
The following lemma summarizes the estimates needed for cyclic stabilizers.

\begin{lemma}\label{lem:local}
Suppose that a point stabilizer $G_P$ is cyclic of order $e=2^bt$, where $t$ is odd, and let $Q$ be the image of $P$ on $\XX/G_P$.
\begin{enumerate}
\item If $b=1$, there is an odd integer $j\ge t$, divisible by $t$, such that $G_P^{(i)}=C_2$ for $1\le i\le j$ and $G_P^{(i)}=\{1\}$ for $i>j$. In particular,
\begin{equation}\label{eq:order2different}
 \delta_Q=\frac{(2t-1)+j}{2t}
 =1-\frac1{2t}+\frac j{2t}
 \ge\frac32-\frac1{2t}.
\end{equation}
Let $T\le G_P$ be the subgroup of order $t$. The quadratic cover $\XX/T\to\XX/G_P$ has, in reduced Artin--Schreier form, a pole of order $j/t$ at $Q$.
\item If $b=2$, then $\delta_Q\ge9/4-1/(4t)\ge2$.
\item If $b\ge3$, then $\delta_Q\ge7/2$.
\end{enumerate}
\end{lemma}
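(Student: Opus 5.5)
The plan is to compute the different exponent $d_P$ directly from the lower ramification filtration of the cyclic group $G_P=S\times T$. Here $S\cong C_{2^b}$ is the Sylow $2$-subgroup, so $S=G_P^{(1)}$, and $T\cong C_t$ is the tame part. Then $\delta_Q=d_P/e$ follows from
\[
 d_P=(e-1)+\sum_{i\ge1}(|G_P^{(i)}|-1),
\]
where for $i\ge1$ every $G_P^{(i)}$ is a subgroup of $S$. The key extra input is the interaction between $T$ and the jumps of $S$. Take a local parameter $v$ with $\sigma(v)=\zeta_t v$ for a generator $\sigma$ of $T$. For $\tau\in S$ with $\ord_P(\tau(v)-v)=i+1$, write $\tau(v)=v+cv^{i+1}+\cdots$. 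Since $G_P$ is abelian, $\sigma\tau\sigma^{-1}=\tau$. Comparing leading coefficients gives $\zeta_t^{i}=1$, so every lower jump $i$ of $S$ is divisible by $t$. This is the standard fact \cite[Chapter~IV]{Serre} that the jumps are multiples of $t$ when the tame part commutes with the wild part.

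\textbf{Case $b=1$.} There is a single jump $j$, with $G_P^{(i)}=C_2$ for $1\le i\le j$. By the argument above, $t\mid j$. To see that $j$ is odd, pass to the quadratic cover $\XX/T\to\XX/G_P$, which is an Artin--Schreier extension with $T$ acting trivially on it. Let $P'$ be the image of $P$ in $\XX/T$, and let $u$ be a local parameter at $P'$. Then the break of $S$ computed in $\XX/T$ is $j/t$, because the lower numbering restricted to the quotient by the tame subgroup scales by $e(P\mid P')=t$. Equivalently, one can read this off from transitivity \eqref{eq:towerdifferent}: $d(P\mid R)=(t-1)+t\,d(P'\mid R)$ and $d_P=2t-1+j$ together give $d(P'\mid R)=1+j/t$.

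By Remark~\ref{rem:ASreduction}, the reduced pole order at $Q$ equals $j/t$ and is odd. Since $t$ is odd, $j$ is odd, and $j\ge t$. Then
\[
 d_P=(2t-1)+j,
\]
which yields \eqref{eq:order2different}, with the lower bound attained at $j=t$.

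\textbf{Case $b=2$.} Here $S\cong C_4$, and Lemma~\ref{lem:breakestimates} gives breaks $j_1\ge1$ and $j_2\ge3j_1$. Moreover, $t$ divides both $j_1$ and $j_2$, so $j_1\ge t$. This gives
\[
 d_P=(4t-1)+3j_1+(j_2-j_1)\ge 4t-1+2j_1+3j_1\ge 4t-1+5t=9t-1.
\]
Dividing by $4t$ gives $\delta_Q\ge 9/4-1/(4t)$, which is at least $2$ since $t\ge1$.

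\textbf{Case $b\ge3$.} If $b=3$, use \eqref{eq:breakbounds} together with $r_1\ge t$:
\[
 d_P=(8t-1)+7r_1+3(r_2-r_1)+(r_3-r_2),
\]
and substitute the minimal values. If $b>3$, the same cannot be done directly, since Lemma~\ref{lem:breakestimates} only covers $C_8$. The plan is instead to reduce to the subgroup $C_8\times T$ via the tower $\XX\to\XX/U\to\XX/G_P$, where $U$ is the subgroup of index $8$ in $S$. Transitivity \eqref{eq:towerdifferent} gives $\delta_Q\ge$ the corresponding $\delta$ for the quotient, whose stabilizer is $C_8\times T$ (using Herbrand for the upper numbering of quotients). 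The main obstacle I expect is exactly this reduction, namely checking that the $C_8$ estimate applies to the quotient stabilizer. Alternatively, one can apply the estimate to the subgroup $C_8\le S$ via restriction, using lower numbering, in place of the quotient. For $C_8$ itself the crude count gives $d_P\ge 8t-1+7t+3\cdot2t+(5t)$, and hence $\delta_Q\ge 7/2$.
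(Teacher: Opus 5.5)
Parts (1) and (2) are correct and follow the paper's argument: comparing coefficients against a linearized tame generator shows that $t$ divides the lower jumps, and you then count the different. In (1) you take a slightly different route to the pole order $j/t$. You use transitivity of the different through the tame cover $\XX\to\XX/T$, which gives $d(P'\mid Q)=1+j/t$, and read off oddness from Remark~\ref{rem:ASreduction}. This is a tidy substitute for the paper's appeal to Herbrand's formula, and it gives both assertions at once.

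Part (3), however, is not established as written. Your final count is wrong. You have $8t-1+7t+6t+5t=26t-1$, and $(26t-1)/(8t)<7/2$ for every $t$ (it equals $25/8$ at $t=1$), so the stated conclusion does not follow. The correct use of \eqref{eq:breakbounds} is $r_3-r_2\ge2r_2+2r_1\ge8r_1\ge8t$. This gives $d_P\ge29t-1$ and $\delta_Q\ge29/8-1/(8t)\ge7/2$. Applying Lemma~\ref{lem:breakestimates} to the Sylow subgroup $S\cong C_8$ when $t>1$ is legitimate, because the lower filtration of $S$ is $G_P^{(i)}\cap S$.

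For $b>3$ you leave the reduction as an expected obstacle, but it is immediate. $G_P/U$ is cyclic of order $8t$ and is the full stabilizer of the image of $P$ in the cover $\XX/U\to\XX/G_P$. So the corrected $b=3$ computation applies to that cover as it stands, with no Herbrand transport of filtrations, and \eqref{eq:towerdifferent} finishes. The paper is more economical still. It quotients by the subgroup of index eight in $G_P$ itself, which contains the tame part, so the quotient stabilizer is exactly $C_8$. Lemma~\ref{lem:breakestimates} then gives a different exponent of at least $7+7+6+8=28$ directly.

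Your suggested alternative, restricting to the subgroup $C_8\le S$, does not work. Restriction only sees the last three lower jumps of $S$, where the weights $|G_P^{(i)}|-1$ are smallest. Combined with \eqref{eq:towerdifferent}, it yields only $\delta_Q\ge28/(2^bt)+\delta'$, with $\delta'$ coming from a stabilizer of order $2^{b-3}t$. For example, for $b=4$ and $t=1$ this is $7/4+1<7/2$.
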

\begin{proof}
First let $e=2t$, and write $G_P=\langle\tau\rangle\times T$, where $\tau$ has order two and $T=\langle\sigma\rangle$ has order $t$. For positive $i$, $G_P^{(i)}$ is a subgroup of $\langle\tau\rangle$. Since $P$ is wildly ramified, these groups are $\langle\tau\rangle$ for $1\le i\le j$ and trivial thereafter, for some positive integer $j$.

We verify the divisibility assertion directly. Choose a local parameter $v$ at $P$ with $\sigma(v)=\zeta_t v$, which is possible because $t$ is odd. By the definition of $G_P^{(i)}$, the first nonzero term of $\tau(v)-v$ has degree $j+1$; write
\[
 \tau(v)=v+c v^{j+1}+\text{terms of degree greater than }j+1,
 \qquad c\ne0.
\]
The equality $\tau\sigma=\sigma\tau$ gives, on comparing the coefficient of $v^{j+1}$, $\zeta_t c=\zeta_t^{j+1}c$. Therefore $t\mid j$. The quotient map $\XX\to\XX/\langle\tau\rangle$ has degree two. In reduced Artin--Schreier form, the pole order corresponding to $P$ is $j$; see \cite[Chapter~11]{HKT}. Hence $j$ is odd. Since $t\mid j$ and $j>0$, we have $j\ge t$. There are $j$ groups of order two after $G_P^{(0)}$, and therefore
\[
 d_P=(2t-1)+j.
\]
Division by $2t$ gives \eqref{eq:order2different}.

For the last assertion in (1), let $n$ be the pole order in the reduced Artin--Schreier equation for
\[
 \XX/T\longrightarrow\XX/G_P.
\]
The standard formula for the ramification break of a quotient gives, in this particular case,
\[
 n=\frac{1}{|G_P|}\sum_{i=1}^{j}|G_P^{(i)}|
   =\frac{1}{2t}\sum_{i=1}^{j}2
   =\frac jt;
\]
see \cite[Chapter~IV]{Serre}. For a quadratic Artin--Schreier extension this ramification break is precisely the pole order in a reduced equation. This proves the last assertion.

For (2), let $j_1$ be the largest integer for which $G_P^{(j_1)}$ has order four, and let $j_2$ be the largest integer for which $G_P^{(j_2)}$ has order two. The same coefficient comparison as above gives $t\mid j_1$, so $j_1\ge t$. Lemma~\ref{lem:breakestimates} gives $j_2\ge3j_1$. We now count the terms in the definition of the different. The term with $i=0$ is $4t-1$. There are $j_1$ further terms, for $1\le i\le j_1$, in which the ramification group has order four, and hence each contributes $4-1=3$. Finally, there are $j_2-j_1$ terms in which the group has order two, and each contributes one. Therefore
\[
 d_P=(4t-1)+3j_1+(j_2-j_1)
     =4t-1+2j_1+j_2\ge9t-1.
\]
Dividing by $4t$ proves (2).

For (3), let $T\le G_P$ be the subgroup of index eight. In the cover
\[
 \XX/T\longrightarrow\XX/G_P
\]
the stabilizer at the image of $P$ is cyclic of order eight. Let $r_1,r_2,r_3$ be the largest indices for which its ramification groups have orders eight, four and two, respectively. The inequalities \eqref{eq:breakbounds} give
\[
 r_1\ge1,\qquad r_2\ge3r_1,\qquad
 r_3\ge3r_2+2r_1.
\]
In particular, $r_2\ge3$ and $r_3\ge11$. We again count the contributions to the different. The term with $i=0$ is seven. The next $r_1$ terms contribute seven each; the following $r_2-r_1$ terms contribute three each; and the last $r_3-r_2$ nonzero terms contribute one each. Hence
\[
 7+7r_1+3(r_2-r_1)+(r_3-r_2)
 =7+4r_1+2r_2+r_3\ge28.
\]
The different exponent for this degree-eight cover is therefore at least $28$. Formula~\eqref{eq:towerdifferent}, applied to the tower obtained from $T$, gives
\[
 \delta_Q=\frac{d_P}{|G_P|}\ge\frac{28}{8}=\frac72.
\]
\end{proof}

\begin{lemma}\label{lem:arithmetic}
Let $m$ be odd, and let $d_0,d_1,d_2$ be pairwise coprime divisors of $m$, with $m/d_i\ge3$ for every $i$. Then
\begin{equation}\label{eq:arithmetic}
 m+2\ge d_0+2d_1+2d_2.
\end{equation}
\end{lemma}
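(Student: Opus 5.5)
The plan is to reduce everything to the product $P=d_0d_1d_2$. Since $d_0,d_1,d_2$ are pairwise coprime divisors of $m$, their product divides $m$, so $P\le m$. Each $d_i$ is odd, and any $d_i>1$ satisfies $d_i\ge3$; moreover, two $d_i$ that are both greater than one are coprime and hence distinct. I will then split according to how many of the $d_i$ equal $1$, and in each case compare $d_0+2d_1+2d_2$ either with $m$ directly, using the hypothesis $m/d_i\ge3$, or with $P+2\le m+2$.

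\emph{At least two of the $d_i$ equal $1$.} Let $d$ be the remaining one. The hypothesis gives $d\le m/3$, and the left-hand side is at most $2d+3$ (the worst case is $d_0=1$). Since $2m/3+3\le m+2$ is equivalent to $m\ge3$, and $m\ge3d_i\ge3$ holds, this case follows. If all three $d_i$ are $1$, the claim reads $5\le m+2$, which is again $m\ge3$.

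\emph{Exactly one of the $d_i$ equals $1$.} Let $x,y\ge3$ be the other two. They are coprime, so $xy\le m$, and it suffices to show that the left-hand side is at most $xy+2$. There are two subcases.
\begin{itemize}
\item If $d_0=1$, the inequality $1+2x+2y\le xy+2$ is equivalent to $(x-2)(y-2)\ge3$. This holds because both factors are odd and at least $1$, and they cannot both be $1$ since $x\ne y$.
\item If $d_1=1$ (the case $d_2=1$ is symmetric), put $x=d_0$ and $y=d_2$. The inequality $x+2+2y\le xy+2$ is equivalent to $(x-2)(y-1)\ge2$, which is immediate from $x\ge3$ and $y\ge3$.
\end{itemize}

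\emph{All three $d_i$ exceed $1$.} Then $d_0d_1d_2\le m$ with each $d_i\ge3$. It suffices to check $d_0+2d_1+2d_2\le d_0d_1d_2$. Let $M$ be the largest of the three. The left-hand side is at most $5M$, while the product is at least $9M$.

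The argument is elementary. The only point needing care is the exactly-one-equals-$1$ case with $d_0=1$, which is where equality can occur: for example $m=15$ with $(d_0,d_1,d_2)=(1,3,5)$ gives $17=17$. There one must use both that $x\ne y$ and that $x,y$ are odd, rather than just $x,y\ge3$; with $x=y=3$ the inequality $(x-2)(y-2)\ge3$ would fail.
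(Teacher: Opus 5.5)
Your proof is correct and follows essentially the same route as the paper: the bound $m\ge d_0d_1d_2$ from pairwise coprimality, a case split according to which $d_i$ equal $1$, the key inequality $(d_1-2)(d_2-2)\ge3$ coming from oddness and distinctness, and the hypothesis $m\ge3d_i$ in the degenerate cases. The differences are only cosmetic: you group the cases by how many $d_i$ equal $1$, you use the factorization $(x-2)(y-1)\ge2$ in place of the paper's subcases $d_0=3$ and $d_0\ge5$, and you use a crude comparison of the weighted sum with the product when all three exceed $1$.
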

\begin{proof}
We distinguish the possible cases according to which of the integers $d_i$ are equal to one.

Assume first that $d_1,d_2>1$. Since they are odd and coprime, their smallest possible values are three and five, in some order. Hence
\[
 (d_1-2)(d_2-2)\ge3,
\]
which is equivalent to
\[
 d_1d_2+1\ge2d_1+2d_2.
\]
The integers $d_0,d_1,d_2$ are pairwise coprime divisors of $m$, so their product divides $m$. Thus $m\ge d_0d_1d_2$, and
\[
 \begin{aligned}
 m+2-d_0-2d_1-2d_2
 &\ge d_0d_1d_2+2-d_0-2d_1-2d_2\\
 &\ge d_1d_2+1-2d_1-2d_2\ge0.
 \end{aligned}
\]

Assume next that $d_1=1<d_2$ and $d_0>1$. The numbers $d_0,d_2$ are odd and coprime, and $m\ge d_0d_2$. If $d_0=3$, then $d_2\ge5$, and
\[
 d_0d_2-d_0-2d_2=d_2-3\ge0.
\]
If $d_0\ge5$, then $d_2\ge3$, and
\[
 d_0d_2-d_0-2d_2
 =d_0(d_2-1)-2d_2
 \ge5(d_2-1)-2d_2=3d_2-5>0.
\]
In both cases $m+2\ge d_0+2+2d_2$. The case $d_2=1<d_1$ is obtained by exchanging $d_1$ and $d_2$.

Suppose that $d_0=d_1=1<d_2$. The inequality $m/d_2\ge3$ gives
\[
 m\ge3d_2\ge3+2d_2=d_0+2d_1+2d_2.
\]
The case $d_0=d_2=1<d_1$ is symmetric. Finally, if $d_1=d_2=1$, then $m\ge3d_0$. If $d_0>1$, this gives $m\ge d_0+4$; if $d_0=1$, the desired inequality is $m+2\ge5$, which follows from $m\ge3$. This proves the lemma.
\end{proof}

For a $2$-group $S$, the Deuring--Shafarevich formula reads
\begin{equation}\label{eq:DS}
 \gamma(\XX)-1=|S|(\gamma(\XX/S)-1)
                 +\sum_{\mathcal O\text{ short}}(|S|-|\mathcal O|).
\end{equation}
For an Artin--Schreier extension of degree two of a rational function field, with $r$ poles of odd orders $j_1,\ldots,j_r$, the formulas become
\begin{equation}\label{eq:ASgenus}
 2g(\XX)-2=-4+\sum_{i=1}^r(j_i+1),\qquad\gamma(\XX)=r-1.
\end{equation}
See \cite[Chapter 11]{HKT} and \cite{Nakajima}.

We shall use one further standard fact in Section~\ref{sec:hermitian}. If a curve that is maximal over $\FF_{q^2}$ admits a nonconstant $\FF_{q^2}$-morphism onto a curve $\mathcal C$, then $\mathcal C$ is also $\FF_{q^2}$-maximal. See \cite[Chapter~10]{HKT} and \cite{GSX}.

\section{Large cyclic groups}

Throughout this section, $G\le\Aut(\XX)$ is cyclic of order
\[
 N=2^a m,
\]
where $m$ is odd. We first exclude $a\ge2$. We then treat separately $a=0$ and $a=1$.

\begin{proposition}\label{prop:no4}
Under the hypotheses of Theorem~\ref{thm:cyclic}, $4\nmid N$.
\end{proposition}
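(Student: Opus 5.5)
Assume $a\ge2$ and derive a contradiction from the budget \eqref{eq:budget}. By Lemma~\ref{lem:rational}, $\XX/G$ is rational, $\sum_Q\delta_Q=2+(2g-2)/N<3$, and some point $P_0$ has a stabilizer $G_{P_0}$ containing the full Sylow $2$-subgroup $S\cong C_{2^a}$ of $G$. Write $|G_{P_0}|=2^at$ with $t$ odd, and let $Q_0$ be the image of $P_0$. Lemma~\ref{lem:local} bounds the contribution of $Q_0$ from below: if $a\ge3$, then $\delta_{Q_0}\ge7/2>3$, which is already a contradiction. So only $a=2$ remains, and then $\delta_{Q_0}\ge 9/4-1/(4t)\ge2$. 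The rest of the budget is therefore less than $1$.

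Next, look at the other branch points. Any other wildly ramified branch point would contribute $\delta_Q\ge1$, so every other branch point is tame, with odd stabilizer $T_Q$. Every odd stabilizer lies in the odd part $C_m$ of $G$.

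\emph{Case $m=1$, or no tame branch point.} Then $Q_0$ is the only branch point, so $G=G_{P_0}$ and the cover $\XX\to\PP^1$ is totally ramified at one point. Using the second form of Lemma~\ref{lem:local}(2), $\delta_{Q_0}=(4t-1+2j_1+j_2)/(4t)$ with $t\mid j_1$ and $j_2\ge3j_1$. I would sharpen the break estimate: $j_1$ and $j_2$ must be such that the $C_2$-quotient (Artin--Schreier with odd pole order) is consistent. One gets $j_1$ odd and $j_2\ge 3j_1$, and Hasse--Arf for cyclic groups gives $j_2\equiv j_1\pmod 4$ in upper numbering. This forces $j_2\ge 3j_1+2$ when $j_1$ is odd, and hence $d_P\ge 4t-1+5j_1+2\ge 9t+1$. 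Then $2g-2=-2N+d_P$ gives $2g-2\ge -8t+9t+1=t+1$. Combined with $N=4t\ge2g+1$, this yields $4t\ge t+5$, so it does not immediately contradict, and more care is needed. I would use the exact genus: $2g=2+(2j_1+j_2-4t-1)\ge 2j_1+j_2-4t+1\ge 5j_1-4t+3\ge t+3$. This gives $N=4t$ versus $2g+1\ge t+4$, which is not contradictory either. So the real constraint must come from the budget inequality $\delta_{Q_0}<3$, i.e.\ $2j_1+j_2<8t+1$, together with $t\mid j_1$ and a divisibility $t\mid j_2$. The latter holds by the same coefficient comparison for the order-two group, and $j_2\equiv j_1\pmod 4$ by Hasse--Arf. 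With $j_1=t$ this gives $j_2=3t+4k$ and $j_2\ge3t$, plus $t\mid j_2$, so $j_2\ge 3t+4t$ unless $j_2=3t$, which Hasse--Arf forbids for $t$ odd only if \dots. This is the step I expect to be the main obstacle: pinning down the parity/congruence of the upper breaks so that $j_2=3t$ is excluded and $j_2\ge7t$ follows, giving $\delta_{Q_0}\ge(4t-1+2t+7t)/(4t)>3$.

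\emph{Case with tame branch points.} There are at most how many tame points? Each contributes at least $1-1/3=2/3$, so there is exactly one, with odd index $e\ge3$. This gives $\delta_{Q_0}<1+1/e$. But then the subgroup generated by the stabilizers must be $G$ (as in the proof of Lemma~\ref{lem:rational}). Also, $\XX/S\to\PP^1$ is a cyclic Kummer cover of degree $m$ ramified over at most two points, so by Lemma~\ref{lem:kummer} both indices equal $m$, i.e.\ $t=m=e$, and then $\delta_{Q_0}\ge 9/4-1/(4m)\ge 1+1/m$ for $m\ge3$. This contradicts $\delta_{Q_0}<1+1/e$.

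Hence $a\le1$.
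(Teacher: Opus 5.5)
Your reduction to $a=2$, to a single wild point $Q_0$ with $\delta_{Q_0}\ge2$, and to at most one tame point matches the paper. However, the subcase with no tame branch point is left open, and the route you sketch cannot close it. The values $j_1=t$, $j_2=3t$ satisfy every local constraint:
\begin{itemize}
\item $t\mid j_1$ and $t\mid j_2$;
\item Hasse--Arf, which here says $j_2\equiv j_1\pmod{2t}$;
\item $u_2=2u_1$ for the upper breaks, which Schmid's theorem allows. This occurs for the Witt vector extension $(z^{-1},0)$ followed by a tame base change of degree $t$.
\end{itemize}
So $j_2\ge3j_1+2$ and $j_2\ge7t$ are false, and $\delta_{Q_0}=(9t-1)/(4t)<3$ cannot be excluded by any local argument.

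The missing idea is global and simple. You already have $G=G_{P_0}$, i.e.\ $t=m$. If $m=1$, then $N=4<5\le2g+1$. If $m>1$, the degree-$m$ Kummer cover $\XX/S\to\XX/G\cong\PP^1$ is ramified only over $Q_0$, which Lemma~\ref{lem:kummer} excludes. The paper uses exactly this to handle both of your subcases at once. The Kummer cover can be neither unramified nor ramified over a single point, so it ramifies over $Q_0$ and exactly one tame point $Q_1$, with index $m$ at both. This gives $t=m$ and stabilizers of orders $4m$ and $m$.

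In the subcase with one tame point, the bound $\delta_{Q_0}<1+1/e$ is an arithmetic slip. From $\delta_{Q_0}+(1-1/e)<3$ you only get $\delta_{Q_0}<2+1/e$. With the correct bound, your comparison with $\tfrac{9}{4}-\tfrac{1}{4m}$ works only for $m\ge5$; for $m=3$ one has $\tfrac{13}{6}<\tfrac{7}{3}$.

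The budget really is insufficient here. Take $N=12$, stabilizers of orders $12$ and $3$, $j_1=3$, $j_2=9$. These data are consistent with Riemann--Hurwitz, giving $g=6$ and $\sum_Q\delta_Q=17/6<3$. They are excluded only because $N=12<13=2g+1$.

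So $\sum_Q\delta_Q<3$ is too weak, and you must use $N\ge2g+1$ directly, as the paper does. Riemann--Hurwitz gives $2g-2\ge4m\bigl(\tfrac{9}{4}-\tfrac{1}{4m}+1-\tfrac{1}{m}\bigr)-8m=5m-5$. Combined with $4m\ge2g+1$, this forces $m\le2$, a contradiction.
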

\begin{proof}
Suppose that $a\ge2$. Lemma~\ref{lem:rational} gives a point $Q_0\in\XX/G$ such that the stabilizer of a point above $Q_0$ has order $2^a t$, for some divisor $t$ of $m$. If $a\ge3$, Lemma~\ref{lem:local}(3) gives $\delta_{Q_0}\ge7/2$, whereas \eqref{eq:budget} gives $\sum_Q\delta_Q<3$. This is impossible.

Suppose that $a=2$. Lemma~\ref{lem:local}(2) gives
\[
 \delta_{Q_0}\ge\frac94-\frac1{4t}\ge2.
\]
The cover is wildly ramified over $Q_0$. It cannot be wildly ramified over another point, since that point would contribute at least one and the sum $\sum_Q\delta_Q$ would be at least three, a contradiction. If the cover is tamely ramified over a point $Q\ne Q_0$, its ramification index is odd and at least three, so $Q$ contributes at least $2/3$. Hence there is at most one such point.

If $m=1$, then $N=4<2g+1$ because $g\ge2$, against the assumption. Suppose $m>1$, and set $S=C_4\le G$. The quotient $\XX/S\to\XX/G$ has cyclic group $G/S$ of odd order $m$. Let $P$ be a point of $\XX$ above $Q\in\XX/G$, and put $e_Q=|G_P|$. The stabilizer of the image of $P$ in $G/S$ is $G_PS/S$. Hence the ramification index of $\XX/S\to\XX/G$ over $Q$ is
\[
 e'_Q=|G_PS/S|=\frac{|G_P|}{|G_P\cap S|}
     =\frac{e_Q}{\gcd(e_Q,4)},
\]
where the last equality follows because $G$ is cyclic. At $Q_0$ this is $e'_{Q_0}=t$, since $e_{Q_0}=4t$. At every other point of $\XX/G$ over which $\XX\to\XX/G$ ramifies, the stabilizer has odd order, and therefore $e'_Q=e_Q$. We know that there is at most one such point. The Kummer cover $\XX/S\to\XX/G$ has degree $m>1$ and target curve $\XX/G\cong\PP^1$. It cannot be unramified, and Lemma~\ref{lem:kummer} excludes ramification over exactly one point. Consequently it ramifies over exactly two points, $Q_0$ and another point $Q_1$. The last assertion of Lemma~\ref{lem:kummer} for a two-point cover gives
\[
 e'_{Q_0}=e'_{Q_1}=m.
\]
Thus $t=m$, and the stabilizers for $\XX\to\XX/G$ above $Q_0,Q_1$ have orders $4m,m$, respectively. Using \eqref{eq:normalized} and Lemma~\ref{lem:local}(2) gives
\[
 \sum_Q\delta_Q\ge\left(\frac94-\frac1{4m}\right)
                         +\left(1-\frac1m\right).
\]
Indeed, multiplying the last inequality by $4m$ and subtracting $8m$ yields
\[
 2g-2\ge-8m+4m\left(\frac94-\frac1{4m}+1-\frac1m\right)
            =5m-5.
\]
Consequently $2g+1\ge5m-2$, whereas the hypothesis says $4m=N\ge2g+1$. These inequalities give $m\le2$, contradicting the assumption that $m>1$ is odd.
\end{proof}

\begin{proof}[Proof of Theorem~\ref{thm:cyclic} when $N$ is odd]
The cover $\XX\to\XX/G$ is tame, and Lemma~\ref{lem:rational} shows that $\XX/G$ is rational. If the cover ramified over only one point, this would contradict Lemma~\ref{lem:kummer}. If it ramified over exactly two points, the same lemma would give $g=0$. Therefore it ramifies over at least three points.

At each of these points the ramification index is an odd integer $e\ge3$. Its contribution to \eqref{eq:budget} is
\[
 \delta_Q=1-\frac1e\ge1-\frac13=\frac23.
\]
Since five such contributions would sum to at least $10/3>3$, the cover ramifies over at most four points.

Assume that the cover ramifies over four points, with indices $e_1\le e_2\le e_3\le e_4$. If $e_1\ge5$, the four contributions sum to at least $4(1-1/5)>3$. If $e_1=3$ and $e_2\ge5$, their sum is at least $2/3+3(4/5)>3$. If $e_1=e_2=3$ and $e_3\ge7$, the sum is at least $2(2/3)+2(6/7)>3$. Therefore $e_1=e_2=3$ and $e_3$ is either three or five.

If $e_3=3$, Lemma~\ref{lem:kummer} applied with $i=4$ shows that $N=\lcm(3,3,3)=3$. Then $e_4=3$. Riemann--Hurwitz gives
\[
 2g-2=-2\cdot3+4(3-1)=2,
\]
so $g=2$ and $N=3<2g+1$.
If $e_3=5$, Lemma~\ref{lem:kummer} applied with $i=4$ gives $N=\lcm(3,3,5)=15$. The fourth index is a divisor of $15$ and is at least five, hence $e_4=5$ or $15$. In the first case
\[
 2g-2=15\left(-2+\frac23+\frac23+\frac45+\frac45\right)=14;
\]
in the second the last $4/5$ becomes $14/15$ and $2g-2=16$. Thus $g=8$ or $9$, and in either case $N=15<2g+1$. Therefore the cover cannot ramify over four points.

There are exactly three points of $\XX/G$ over which the cover ramifies. Choose a coordinate $x$ on $\XX/G$ so that they are $0,1,\infty$. By Lemma~\ref{lem:kummernormalform}, after multiplying $y$ by a rational function and taking a constant $N$th root, a Kummer equation has the form
\[
 y^N=x^r(x+1)^s.
\]
The exponent at infinity is $-r-s$ modulo $N$. All three exponent residues are nonzero because the cover ramifies over all three points. Choose representatives between one and $N-1$. Their sum is divisible by $N$ and lies strictly between zero and $3N$, so it is equal to $N$ or $2N$. Replacing $y$ by $y^{-1}$ changes every residue to its negative and exchanges these two possibilities. We may therefore arrange $r+s<N$.

The extension has degree $N$ precisely when $\gcd(N,r,s)=1$. If a common divisor $d>1$ existed, the right-hand side would be a $d$th power and the Kummer equation would have degree at most $N/d$. Conversely, if the right-hand side were a proper power in $K(x)$, its valuations $r$ and $s$ at $0$ and $1$ would share a divisor with $N$. The ramification indices at the three ramification points are
\[
 \frac N{\gcd(N,r)},\qquad
 \frac N{\gcd(N,s)},\qquad
 \frac N{\gcd(N,r+s)}.
\]
Substituting their contributions $1-\gcd(N,r)/N$, $1-\gcd(N,s)/N$ and $1-\gcd(N,r+s)/N$ in \eqref{eq:normalized} gives
\[
 2g-2=-2N+3N-\gcd(N,r)-\gcd(N,s)-\gcd(N,r+s),
\]
which is \eqref{eq:tamegenus}. Conversely, each of the three greatest common divisors is at least one. Hence $2g\le N-1$, or equivalently $N\ge2g+1$. This proves case~\textup{(I)}.
\end{proof}

Assume now that $N$ is even. By Proposition~\ref{prop:no4}, $N=2m$, where $m\ge3$ is odd. Let $P=C_2$ and $B=C_m$ be the unique subgroups of $G$ of those orders. Put
\[
 \YY=\XX/P,\qquad\ZZ=\XX/G\cong\PP^1.
\]
The extension $K(\YY)/K(\ZZ)$ is cyclic Kummer of degree $m$.

\begin{lemma}\label{lem:onewild}
There is exactly one point $Q\in\ZZ$ such that $\XX\to\ZZ$ is wildly ramified over $Q$.
\end{lemma}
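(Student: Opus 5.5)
Existence comes from Lemma~\ref{lem:rational}, and uniqueness from the budget~\eqref{eq:budget}.

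\emph{Existence.} By Lemma~\ref{lem:rational}, some point stabilizer $G_{P'}$ contains the full Sylow $2$-subgroup of $G$. Here that subgroup is the unique subgroup $P$ of order two. Since $\XX$ has characteristic two, the group $G_{P'}^{(1)}$ is the Sylow $2$-subgroup of $G_{P'}$, so $G_{P'}^{(1)}=P\neq\{1\}$. Hence $\XX\to\ZZ$ is wildly ramified over $Q=\Phi(P')$.

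\emph{Uniqueness.} Suppose the cover were wildly ramified over two distinct points $Q,Q'$. The stabilizers there have even order $2t$ with $t$ odd. By Lemma~\ref{lem:local}(1) each satisfies
\[
\delta\ge\frac32-\frac1{2t}\ge1 .
\]
So these two points contribute at least $2$ to $\sum\delta$. By~\eqref{eq:budget} the total is strictly less than $3$, so the remaining points contribute less than $1$. This alone is not yet a contradiction, so I would sharpen the estimate.

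\emph{Sharpening.} If $t>1$ at either point, that point already has $\delta\ge\frac32-\frac16=\frac43$. Then:
\begin{itemize}
\item if both have $t>1$, the sum is at least $\frac83$, leaving less than $\frac13$ for all other points;
\item if one has $t=1$, the sum is at least $1+\frac43=\frac73$.
\end{itemize}
Any additional tame point has odd index at least $3$, so it contributes at least $\frac23$. Combined with the wild contributions this exceeds $3$ in every subcase except the one with both $t=1$, where the wild points contribute exactly $2$ and the extra point at least $\frac23$, giving $\frac83<3$. So the main case to handle is: wild ramification over $Q,Q'$ with $t$ values as above, and possibly one further tame point.

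Now I use the Kummer cover $\YY\to\ZZ$ of odd degree $m\ge3$. Its ramification index over each point is $e_Q/\gcd(e_Q,2)$, that is $t$ at the wild points and $e$ at a tame point. This cover cannot be unramified (as $\ZZ\cong\PP^1$), and by Lemma~\ref{lem:kummer} it cannot ramify over exactly one point. By the same lemma, two-point ramification forces both indices to equal $m$, and three-point ramification forces $\lcm$ conditions. I split into cases.

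\begin{itemize}
\item \emph{Both wild points have $t=1$.} Then $\YY\to\ZZ$ ramifies only at tame points, and there is at most one of those. That contradicts Lemma~\ref{lem:kummer}.
\item \emph{One $t=1$, the other $t>1$.} The sum $\frac73$ leaves room for no further tame point, since $\frac73+\frac23=3$ is not strictly less than $3$. So $\YY\to\ZZ$ ramifies over exactly one point, which is impossible.
\item \emph{Both $t,t'>1$.} Only these two points remain, so Lemma~\ref{lem:kummer} forces $t=t'=m$. Then each point has $\delta\ge\frac32-\frac1{2m}$, and the sum is at least $3-\frac1m$.
\end{itemize}

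In the last case I substitute into~\eqref{eq:normalized} with $N=2m$:
\[
2g-2\ge 2m\left(1-\frac1m\right)=2m-2 ,
\]
so $g\ge m$. This gives $N=2m\le2g<2g+1$, contradicting the hypothesis.

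The main obstacle is this last computation. It is where the hypothesis $N\ge2g+1$, and not merely the cruder budget, is actually used, and where the exact value $\delta\ge\frac32-\frac1{2t}$ matters; a plain bound $\delta\ge1$ would not suffice.
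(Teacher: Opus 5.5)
Your proof is correct and is essentially the paper's own argument. It uses existence from Lemma~\ref{lem:rational}, the budget \eqref{eq:budget} to bound the extra tame points, and Lemma~\ref{lem:kummer} applied to $\YY\to\ZZ$ to rule out the cases with some $t=1$; it ends with the same Riemann--Hurwitz estimate, forcing $t_1=t_2=m$ and hence $g\ge m$. Only cosmetic fixes are needed: two wild points with $t=1$ contribute at least $2$ (not exactly $2$), and you should state explicitly that a third wild point is excluded because it would add at least $1$ to the sum.
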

\begin{proof}
Lemma~\ref{lem:rational} gives at least one point of $\ZZ$ over which the cover is wildly ramified. Each such point contributes at least one to \eqref{eq:budget}, so there are at most two. Suppose that there are two, with stabilizers of orders $2t_1$ and $2t_2$.

Assume first that $\XX\to\ZZ$ ramifies over no other point. The Kummer cover $\YY\to\ZZ$ can then ramify only over these two points. It is a nontrivial tame cover of the rational curve $\ZZ$, so it cannot be unramified; by Lemma~\ref{lem:kummer}, it cannot ramify over exactly one point either. Hence it ramifies over both points, with indices $t_1,t_2$. Lemma~\ref{lem:kummer} now forces $t_1=t_2=m$. By \eqref{eq:order2different}, each contribution is at least $3/2-1/(2m)$. Substituting their sum into \eqref{eq:normalized} gives
\[
 2g-2\ge2m\left(-2+3-\frac1m\right)=2m-2.
\]
Hence $g\ge m$, whereas $2m\ge2g+1$ implies $g<m$.

Suppose that $\XX\to\ZZ$ is also tamely ramified over another point. If one $t_i>1$, then $t_i\ge3$ and the corresponding contribution is at least $3/2-1/6=4/3$. The second wildly ramified point contributes at least one, and the tamely ramified point contributes at least $2/3$. Their sum is at least three, against \eqref{eq:budget}.

It remains to consider $t_1=t_2=1$. The cover $\YY\to\ZZ$ is unramified over these two points. Since it has degree $m>1$, it must ramify over at least two further points by Lemma~\ref{lem:kummer}. They contribute at least $2(2/3)$. Together with the two wildly ramified points this gives more than three, again a contradiction.
\end{proof}

\begin{proof}[Completion of the proof of Theorem~\ref{thm:cyclic}]
Let $Q_0\in\ZZ$ be the unique point over which $\XX\to\ZZ$ is wildly ramified. Write $2t$ for the order of the stabilizer above $Q_0$, and let $j$ be the integer in Lemma~\ref{lem:local}(1). Thus $j$ is an odd multiple of $t$ and $j\ge t$. The point $Q_0$ contributes at least one. Every point over which the cover is tamely ramified contributes at least $2/3$. Therefore there are at most two such points.

There is at least one. Indeed, if $t>1$, the Kummer cover $\YY\to\ZZ$ ramifies over $Q_0$ and must ramify over another point. If $t=1$, it is unramified over $Q_0$ and must ramify over at least two points.

Suppose first that $\XX\to\ZZ$ is tamely ramified over exactly one point $Q_1$. If $t=1$, then $\YY\to\ZZ$ is unramified over $Q_0$ and can ramify only over $Q_1$, which is impossible by Lemma~\ref{lem:kummer}. Thus $t>1$, and $\YY\to\ZZ$ ramifies over both $Q_0$ and $Q_1$. Lemma~\ref{lem:kummer} gives ramification index $m$ at both points. Hence $t=m$, and the stabilizers for $\XX\to\ZZ$ have orders $2m$ and $m$. Their different exponents are $2m-1+j$ and $m-1$. The fiber over $Q_0$ has one point, and the fiber over $Q_1$ has two. Thus \eqref{eq:RH} gives
\begin{equation}\label{eq:twobranchwild}
 2g-2=-4m+(2m-1+j)+2(m-1)=j-3.
\end{equation}
The equality $2g-2=j-3$ is equivalent to $2g+1=j$. The hypothesis $N=2m\ge2g+1$ gives $j\le2m$. But $j$ is a positive odd multiple of $m$; the only possibility is $j=m$. Consequently $g=(m-1)/2$.

Suppose now that $\XX\to\ZZ$ is tamely ramified over two points $Q_1,Q_2$, with indices $e_1,e_2$. Assume first that $t=1$. Then $\YY\to\ZZ$ is unramified over $Q_0$ and ramifies over $Q_1,Q_2$. Both ramification indices are $m$. The stabilizers for $\XX\to\ZZ$ therefore have orders $2,m,m$. The fiber over $Q_0$ has $m$ points, each with different exponent $j+1$. Each of the other two fibers has two points, each with different exponent $m-1$. Hence
\begin{equation}\label{eq:threebranchwild}
 2g-2=2m-4+m(j-1).
\end{equation}
Indeed, $2g+1=2m-1+m(j-1)$. The inequality $2g+1\le2m$ therefore yields $m(j-1)\le1$. Since $m\ge3$ and $j$ is a positive odd integer, we obtain $j=1$ and $g=m-1$.

It remains to treat $t>1$. The Kummer cover $\YY\to\ZZ$ has three ramification points of indices $t,e_1,e_2$. Put $d_0=m/t$, $d_1=m/e_1$, and $d_2=m/e_2$. By Lemma~\ref{lem:kummer}, these three integers are pairwise coprime. Moreover,
\[
 \frac{m}{d_0}=t\ge3,\qquad
 \frac{m}{d_i}=e_i\ge3\quad(i=1,2),
\]
so Lemma~\ref{lem:arithmetic} applies.

Above $Q_0$ the stabilizer has order $2t$, the different exponent is $2t-1+j$, and the fiber has $m/t=d_0$ points. Above $Q_i$, for $i=1,2$, the fiber has $2m/e_i=2d_i$ points, each with different exponent $e_i-1$. Its total contribution to \eqref{eq:RH} is
\[
 2d_i(e_i-1)=2m-2d_i.
\]
Consequently
\[
 \begin{aligned}
 2g-2
 &=-4m+d_0(2t-1+j)+(2m-2d_1)+(2m-2d_2)\\
 &=2m-2d_1-2d_2+d_0(j-1)\\
 &\ge2m-2d_1-2d_2+m-d_0\\
 &\ge2m-2,
 \end{aligned}
\]
because $j\ge t$, so $d_0j\ge m$, and Lemma~\ref{lem:arithmetic} gives $m+2\ge d_0+2d_1+2d_2$. This implies $g\ge m$, contrary to $2m\ge2g+1$.

In both remaining cases $\YY\to\ZZ$ ramifies over exactly two points, with ramification index $m$ at each. Lemma~\ref{lem:kummer} gives $\YY\cong\PP^1$. Choose coordinates $x,z$ such that $K(\YY)=K(x)$, $K(\ZZ)=K(z)$, and $z=x^m$. The field $K(\XX)^B$, where $B=C_m$, has degree two over $K(z)$. This quadratic extension ramifies over a point $Q\in\ZZ$ precisely when the stabilizer above $Q$ for $\XX\to\ZZ$ has even order. By Lemma~\ref{lem:onewild}, this happens only at $Q_0$. Hence its Artin--Schreier equation has exactly one pole, at $Q_0$. By Lemma~\ref{lem:local}(1), its order is $j/t=1$.

In the first case $\YY\to\ZZ$ ramifies over $Q_0$. Put $Q_0$ at $z=\infty$. A function in $K(z)$ with no pole except a simple pole at infinity has the form $az+b$. Since $K$ is algebraically closed, the constant $b$ can be removed by an Artin--Schreier change of variable. Hence
\[
 y^2+y=az,\qquad a\ne0.
\]
Substituting $z=x^m$ and scaling $x$ gives \eqref{eq:onepole}.

In the second case $\YY\to\ZZ$ is unramified over $Q_0$. Put its two ramification points at $z=0,\infty$ and put $Q_0$ at $z=1$. A function in $K(z)$ with only a simple pole at $z=1$ has the form $c/(z+1)+b$. Removing the constant gives
\[
 y^2+y=\frac{c}{z+1},\qquad c\ne0.
\]
Substitution $z=x^m$ yields \eqref{eq:ordinary}.

The extensions $K(\YY)/K(z)$ and $K(\XX)^B/K(z)$ have coprime degrees $m$ and two. Their intersection is therefore $K(z)$, and their compositum has degree $2m$. Since $[K(\XX):K(z)]=2m$, this compositum is $K(\XX)$. In both displayed equations, the automorphisms $x\mapsto\zeta_m x$ and $y\mapsto y+1$ commute. Their product has order $2m$ and generates the original group.

In case~\textup{(II)}, the function $x^m\in K(x)$ has one pole of odd order $m$. Formula \eqref{eq:ASgenus} gives $2g-2=-4+(m+1)=m-3$ and $\gamma(\XX)=0$. In case~\textup{(III)}, the polynomial $x^m+1$ has $m$ distinct roots, and $c/(x^m+1)$ has a simple pole at each of them. Formula \eqref{eq:ASgenus} gives $2g-2=-4+2m$ and $\gamma(\XX)=m-1=g$. Finally, the respective cyclic orders are $2m=4g+2$ and $2m=2g+2$, both at least $2g+1$. This proves the converse and Theorem~\ref{thm:cyclic}.
\end{proof}

\begin{remark}
The theorem also shows that the bound \eqref{eq:abelianbound} is attained: the curve $y^2+y=x^{2g+1}$ has a cyclic group of order $4g+2$.
\end{remark}

\section{Groups with a cyclic subgroup of index two}
The aim of this section is to classify the curves admitting a group $H\le\Aut(\XX)$ with a cyclic subgroup $G$ of index two and $|H|>4g+4$. We first determine whether the cyclic groups of even order in Theorem~\ref{thm:cyclic} can be contained in such a group.

For the curves in cases~\textup{(II)} and~\textup{(III)} of Theorem~\ref{thm:cyclic}, put
\[
 \iota:(x,y)\mapsto(x,y+1),\qquad
 \varphi:(x,y)\mapsto(\zeta_mx,y).
\]
Then $G=\langle\iota,\varphi\rangle$ is cyclic of order $2m$.

\begin{proposition}\label{prop:normalizers}
For $g\ge2$, the normalizer of $G$ in the full automorphism group is
\begin{equation}\label{eq:normalizers}
 \NN_{\Aut(\XX)}(G)=
 \begin{cases}
 G,&y^2+y=x^m,\\
 D_{4m},&y^2+y=c/(x^m+1).
 \end{cases}
\end{equation}
In the second case, the normalizer is generated by $G$ and
\begin{equation}\label{eq:reflection}
 t:(x,y)\mapsto(x^{-1},y+d),\qquad d^2+d=c.
\end{equation}
\end{proposition}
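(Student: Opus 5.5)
The plan is to reduce everything to automorphisms of the rational function field $K(x)$ and then apply the lifting criterion of Lemma~\ref{lem:ASlift}. In both families the involution $\iota:(x,y)\mapsto(x,y+1)$ generates the Galois group of the degree-two extension $K(\XX)/K(x)$, so $\XX/\langle\iota\rangle$ is rational. Since $g\ge2$, $\iota$ is the hyperelliptic involution. By the uniqueness property recalled in the preliminaries it is central in $\Aut(\XX)$, every automorphism leaves $K(x)$ invariant, and $\Aut(\XX)/\langle\iota\rangle$ embeds in $\operatorname{PGL}(2,K)$ acting on $x$.

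First step: locate the image $\bar\NN$ of $\NN:=\NN_{\Aut(\XX)}(G)$ in $\operatorname{PGL}(2,K)$. The image of $G$ is $\bar G=\langle x\mapsto\zeta_mx\rangle\cong C_m$, with $m$ odd. Any element of $\bar\NN$ normalizes $\bar G$, so it permutes the two fixed points $0,\infty$ of $\bar G$ (Lemma~\ref{lem:P1cyclic}). Hence every element of $\bar\NN$ has the form $\alpha(x)=\lambda x$ or $\alpha(x)=\lambda/x$ with $\lambda\in K^*$. Conversely, any lift of such an $\alpha$ normalizes $G$: it commutes with $\iota$ (which is central) and conjugates $\varphi$ into a lift of an element of $\bar G$. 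So $\NN$ is exactly the full preimage of the set of liftable maps of these two shapes.

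Second step: test liftability with \eqref{eq:ASlift}, using that $h^2+h$ has only poles of even order.
\begin{itemize}
\item For $f=x^m$ and $\alpha=\lambda x$, the function $f(\alpha x)+f=(\lambda^m+1)x^m$ has a pole of odd order $m$ at $\infty$ unless $\lambda^m=1$.
\item For $f=x^m$ and $\alpha=\lambda/x$, the function $\lambda^mx^{-m}+x^m$ has odd-order poles at both $0$ and $\infty$, so it never lifts.
\end{itemize}
Thus $\bar\NN=\bar G$, giving $|\NN|=2m$ and $\NN=G$.

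For $f=c/(x^m+1)$ the checks run as follows.
\begin{itemize}
\item For $\alpha=\lambda x$ with $\lambda^m\ne1$, the sum $f(\lambda x)+f(x)$ has simple poles at the roots of $x^m+1$, which are not cancelled. So again $\lambda^m=1$.
\item For $\alpha=\lambda/x$, we have $f(\lambda/x)=cx^m/(x^m+\lambda^m)$. If $\lambda^m\ne1$, simple poles at the roots of $x^m+1$ survive, so there is no lift. If $\lambda^m=1$, the sum becomes $c(x^m+1)/(x^m+1)=c$. This equals $h^2+h$ with $h=d$ constant and $d^2+d=c$, which is solvable since $K$ is algebraically closed.
\end{itemize}
Hence $\bar\NN\cong D_{2m}$, generated by $\bar G$ and $x\mapsto1/x$, and $|\NN|=4m$. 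The lift of $x\mapsto1/x$ is exactly $t$ in \eqref{eq:reflection}.

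Third step: identify the group in the second case. We have $t^2:(x,y)\mapsto(x,y+2d)=\mathrm{id}$, and $t$ commutes with $\iota$. A direct computation gives $t\varphi t:(x,y)\mapsto(\zeta_m^{-1}x,y)=\varphi^{-1}$. Hence
\[
\NN=\langle\iota\rangle\times\langle\varphi,t\rangle\cong C_2\times D_{2m}\cong D_{4m},
\]
where the last isomorphism holds because $m$ is odd: $\iota\varphi$ has order $2m$ and is inverted by $t$.

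The main obstacle I anticipate is the first step. One must justify carefully that every element of $\NN$ preserves $K(x)$, via the hyperelliptic uniqueness, which is where $g\ge2$ is used: it forces $m\ge5$ in case (II) and $m\ge3$ in case (III). One must also check that normalizing $\bar G$ forces the shapes $\lambda x$ and $\lambda/x$. The pole-order checks in the second step are routine.
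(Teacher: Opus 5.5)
Your proposal is correct and follows essentially the same route as the paper: centrality of the hyperelliptic involution $\iota$, reduction to the maps $x\mapsto\lambda x$ and $x\mapsto\lambda/x$ that normalize $\langle x\mapsto\zeta_m x\rangle$, and the lifting criterion of Lemma~\ref{lem:ASlift}. The only difference is minor. Where the paper argues that a liftable map must preserve the branch locus ($\{\infty\}$ in the first case, $\{x^m=1\}$ in the second), you rule out $\lambda^m\ne1$ and $x\mapsto\lambda/x$ directly through the odd-order poles of $f(\alpha(x))+f(x)$, which is equally valid.
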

\begin{proof}
The fixed field of $\iota$ is the rational function field $K(x)$, so $\iota$ is the hyperelliptic involution. For a curve of genus at least two the hyperelliptic involution is unique; see \cite[Chapter~11]{HKT}. Consequently $\iota$ is central in $\Aut(\XX)$.

Since $\iota$ is central, every automorphism of $\XX$ leaves $K(x)$ invariant and induces an automorphism of this field. If it normalizes $G$, the induced automorphism normalizes $\langle\varphi\rangle\cong C_m$. This cyclic group fixes exactly the places $x=0$ and $x=\infty$. Hence the induced automorphism preserves or exchanges them. So, it has the form $x\mapsto ax$ or $x\mapsto a/x$, for some $a\in K^*$.

For $y^2+y=x^m$, the extension $K(\XX)/K(x)$ ramifies only over the infinite place of $K(x)$. An automorphism that lifts must preserve this place. Hence $x\mapsto a/x$ cannot lift. Consider $x\mapsto ax$. By Lemma~\ref{lem:ASlift}, it lifts if and only if there is $h\in K(x)$ such that
\[
 h^2+h=(a^m+1)x^m.
\]
If $a^m+1\ne0$, the right-hand side has a pole of odd order $m$ at infinity. This is impossible because every pole of $h^2+h$ has even order. Therefore $a^m=1$. Both lifts of these $m$ dilations are already in $G$, proving the first case.

For $y^2+y=c/(x^m+1)$, the extension $K(\XX)/K(x)$ ramifies over the $m$ places determined by $x^m=1$. A map $x\mapsto ax$ or $x\mapsto a/x$ preserves this set only if $a^m=1$. Every such dilation $x\mapsto ax$ lifts by leaving $y$ fixed. The map $x\mapsto x^{-1}$ also lifts, because in characteristic two
\[
 \frac{c}{x^{-m}+1}+\frac{c}{x^m+1}=c.
\]
Choose $d$ as in \eqref{eq:reflection}. Lemma~\ref{lem:ASlift} shows that $t$ lifts $x\mapsto x^{-1}$. A direct calculation gives $t^2=1$. Moreover, $t$ centralizes $\iota$ and satisfies $t\varphi t=\varphi^{-1}$. The group generated by $G$ and $t$ has order $4m$ and contains all possible lifts, because the kernel of the restriction to $K(x)$ is $\langle\iota\rangle$. If $\rho:(x,y)\mapsto(\zeta_mx,y+1)$, then $t\rho t=\rho^{-1}$, so this group is $D_{4m}$.
\end{proof}

\begin{corollary}\label{cor:oddG}
Under the hypotheses of Theorem~\ref{thm:index}, the cyclic subgroup $G$ has odd order.
\end{corollary}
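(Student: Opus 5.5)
Suppose, for a contradiction, that $|G|$ is even, and write $N=|G|$. Since $|H|=2N>4g+4$, we get $N>2g+2\ge2g+1$. Theorem~\ref{thm:cyclic} therefore applies to $G$, and because $N$ is even we are in case~(II) or case~(III). In case~(III) we have $N=2m=2g+2$. This contradicts $N>2g+2$, so case~(III) is excluded at once.

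It remains to treat case~(II), where $\XX\cong\{y^2+y=x^m\}$ and $G=\langle\iota,\varphi\rangle$, after the isomorphism of Theorem~\ref{thm:cyclic}. A subgroup of index two is normal, so $H\le\NN_{\Aut(\XX)}(G)$. Proposition~\ref{prop:normalizers} gives $\NN_{\Aut(\XX)}(G)=G$ for this curve. Hence $H=G$, which contradicts $[H:G]=2$. So $|G|$ is odd.

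The only point that needs care is that Proposition~\ref{prop:normalizers} is stated for the displayed generators. We must check that the identification of Theorem~\ref{thm:cyclic} carries the given $G$ onto exactly that subgroup. It does, because the theorem asserts an isomorphism carrying $G$ onto the displayed cyclic group. Conjugating $H$ by this isomorphism preserves both normality and index, so the argument transfers without change. The proof is short, and I expect no real obstacle.
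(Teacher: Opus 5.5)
Your proof is correct and follows the paper's argument essentially step for step. Case~(III) is excluded because $N=2g+2$, and case~(II) because $G$ is self-normalizing by Proposition~\ref{prop:normalizers}, so it cannot be an index-two subgroup of $H$. Your extra remark about transporting $G$ onto the displayed generators makes explicit a point the paper leaves implicit, and it is harmless.
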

\begin{proof}
Put $N=|G|=|H|/2$. From $|H|>4g+4$ we obtain $N>2g+2$. Suppose that $N$ is even. By Theorem~\ref{thm:cyclic}, $\XX$ belongs to case~\textup{(II)} or~\textup{(III)}. In case~\textup{(III)}, $N=2g+2$, contrary to $N>2g+2$. In case~\textup{(II)}, Proposition~\ref{prop:normalizers} gives $\NN_{\Aut(\XX)}(G)=G$. Since an index-two subgroup is normal, we have $H\le\NN_{\Aut(\XX)}(G)=G$, again a contradiction. Thus $N$ is odd. Finally, $H$ cannot be cyclic, because its order would contradict \eqref{eq:abelianbound}.
\end{proof}

\begin{proof}[Proof of Theorem~\ref{thm:index}]
Assume the hypotheses of Theorem~\ref{thm:index}. By Corollary~\ref{cor:oddG}, $G=C_N$ with $N$ odd. The group $H$ has order $2N$; by Sylow's theorem it contains an involution $t$ outside $G$. Since $G$ is normal of index two, $H=G\rtimes\langle t\rangle$. If $\rho$ generates $G$, conjugation by $t$ sends $\rho$ to $\rho^a$ for some integer $a$ coprime to $N$. Applying $t$ twice gives $a^2\equiv1\pmod N$, and thus
\begin{equation}\label{eq:semidirect}
 H=\langle\rho,t\mid\rho^N=t^2=1,\ t\rho t=\rho^a\rangle,
 \qquad a^2\equiv1\pmod N.
\end{equation}
The induced involution on $\XX/G\cong\PP^1$ is nontrivial. Indeed, an automorphism acting trivially on $K(\XX)^G$ belongs to the Galois group $G$ of $K(\XX)/K(\XX)^G$, while $t\notin G$.

By Theorem~\ref{thm:cyclic}, there are exactly three points of $\XX/G$ over which the $G$-cover ramifies. The induced involution permutes them. By Lemma~\ref{lem:P1involution}, it has exactly one fixed point on $\XX/G$. Hence it cannot fix all three points: it fixes one of them and exchanges the other two. Choose the fixed point as infinity and the exchanged points as $0,1$. The last assertion of Lemma~\ref{lem:P1involution} allows us to choose the coordinate so that the involution is $x\mapsto x+1$.

Write $y^N=x^r(x+1)^s$. Since $t$ exchanges $0$ and $1$, the corresponding stabilizers have the same order. Their indices are $N/\gcd(N,r)$ and $N/\gcd(N,s)$, so $\gcd(N,r)=\gcd(N,s)$. A common divisor also divides $\gcd(N,r,s)=1$. Both greatest common divisors therefore equal one, and the two points are totally ramified. Since $r$ is invertible modulo $N$, choose $u$ with $ur\equiv1\pmod N$, replace $y$ by $y^u$, and multiply the result by an appropriate rational function. The new residue at $x=1$ is $s'\equiv us\pmod N$. We choose $1\le s'<N$ and then rename $s'$ as $s$. Thus the two residues are $1,s$, with $\gcd(s,N)=1$.
Thus, from now on, we use the equation
\[
 y^N=x(x+1)^s.
\]

Let $\rho$ act by $\rho(y)=\zeta_Ny$. From $t\rho t=\rho^a$ and $t^2=1$ we obtain
\[
 \rho t=t\rho^a.
\]
Lemma~\ref{lem:kummernormalform} now gives
\[
 t(y)=h(x)y^a
\]
for some $h(x)\in K(x)$. The map $t$ exchanges $x=0$ and $x=1$. At these two points the exponent residues of the Kummer equation are $1$ and $s$, respectively. Therefore multiplication by $a$ must send the first residue to the second one, and hence $a\equiv s\pmod N$. Since $a^2\equiv1\pmod N$, we obtain
\begin{equation}\label{eq:exponent}
 s\equiv a\pmod N,\qquad s^2\equiv1\pmod N.
\end{equation}
The congruence $s^2\equiv1\pmod N$ makes $\ell=(s^2-1)/N$ an integer. The following map is a lift of $x\mapsto x+1$:
\begin{equation}\label{eq:Kummerlift}
 t_0:(x,y)\mapsto\left(x+1,\frac{y^s}{(x+1)^\ell}\right),
 \qquad \ell=\frac{s^2-1}{N}.
\end{equation}
Indeed, its $N$th power in the second coordinate is
\[
 \frac{(x(x+1)^s)^s}{(x+1)^{\ell N}}
 =x^s(x+1)^{s^2-\ell N}=x^s(x+1),
\]
which is the defining equation after $x$ is replaced by $x+1$. Applying the map twice to $y$ gives
\[
 \frac{y^{s^2}}{(x+1)^{\ell s}x^\ell}
 =y\,\frac{(x(x+1)^s)^\ell}{(x+1)^{\ell s}x^\ell}=y.
\]
Thus it is indeed an involution.

The original involution $t$ and $t_0$ induce the same automorphism of $K(x)$. Therefore $t_0t^{-1}$ belongs to the Galois group $G$. In particular $t_0\in H$, and we may replace $t$ by $t_0$. From now on we again denote this involution by $t$.

Put $M=\gcd(N,s+1)$ and $k=N/M$. Since the first two ramification points are totally ramified, their three greatest common divisors in \eqref{eq:tamegenus} are $1,1,M$, respectively. Therefore
\begin{equation}\label{eq:NminusM}
 2g=N-M=M(k-1).
\end{equation}
The numbers $s-1$ and $s+1$ differ by two, and $N$ is odd; hence no prime dividing $N$ divides both. As $N$ divides $(s-1)(s+1)$, each prime power in $N$ must divide exactly one of these two factors. By definition, $M$ is the product of those prime powers dividing $s+1$, and $k=N/M$ is the product of the others. It follows that
\begin{equation}\label{eq:CRT}
 \gcd(M,k)=1,\qquad s\equiv-1\pmod M,\qquad s\equiv1\pmod k.
\end{equation}
Both $M$ and $k$ are odd. Since $2g=N-M$, the inequality $N>2g+2$ becomes $N>N-M+2$, or $M>2$. Also $g=M(k-1)/2\ge2$ forces $k>1$. Thus $M,k\ge3$.

Decompose $G=C_k\times C_M$. Relations \eqref{eq:semidirect} and \eqref{eq:CRT} say that $t$ centralizes $C_k$ and sends every element of $C_M$ to its inverse; in particular, if $b$ generates $C_M$, then $tbt=b^{-1}$. Therefore
\begin{equation}\label{eq:directproduct}
 H\cong C_k\times D_{2M}.
\end{equation}
This gives the group-theoretic part of Theorem~\ref{thm:index}.

Let $L=C_k\le G$ and $\YY=\XX/L$. Above each of $0,1$, the $G$-cover has one point, fixed by all of $G$, hence also by $L$. At infinity the $G$-stabilizer has order $N/M=k$ by \eqref{eq:NminusM}; it therefore equals $L$. The fiber has $N/k=M$ points, all fixed by $L$. No other point has a nontrivial stabilizer in $L$, because the $G$-cover has only these three ramification points. The cover $\XX\to\YY$ is tame of degree $k$, totally ramified at $M+2$ points. Its different is $(M+2)(k-1)$, and Riemann--Hurwitz gives
\[
 M(k-1)-2=k(2g(\YY)-2)+(M+2)(k-1).
\]
The left side equals $M(k-1)-2$ by \eqref{eq:NminusM}; subtracting $(M+2)(k-1)$ from both sides gives $-2k=k(2g(\YY)-2)$. Thus $g(\YY)=0$.

The group $H/L\cong D_{2M}$ acts faithfully on this rational curve. By Lemma~\ref{lem:P1cyclic}, we may choose a coordinate $u$ such that
\[
 b:u\mapsto\zeta_Mu,\qquad t:u\mapsto c/u
\]
for some $c\in K^*$. Since $K$ is algebraically closed, we can replace $u$ by a constant multiple and obtain $c=1$. Thus
\[
 b:u\mapsto\zeta_Mu,\qquad t:u\mapsto u^{-1}.
\]

The $M+2$ ramification points of $\XX\to\YY$ lie over the three points of $\XX/G$ considered above. Two of them are fixed by $C_M$, and the remaining $M$ points form one $C_M$-orbit. Consequently they are
\[
 0,\quad\infty,\quad\text{and}\quad \{u:u^M=\lambda\}
\]
for some $\lambda\in K^*$. The map $u\mapsto u^{-1}$ sends the last set to $\{u:u^M=\lambda^{-1}\}$. Since it preserves the ramification points, $\lambda=\lambda^{-1}$. Therefore $\lambda^2=1$, and in characteristic two this implies $\lambda=1$. Hence the set of ramification points is
\begin{equation}\label{eq:branchset}
 \{0,\infty\}\cup\{u:u^M=1\}.
\end{equation}

The cover $\XX\to\YY$ is a Kummer cover of degree $k$, with Galois group $L$. Since $L$ is central in $H$, both $b$ and $t$ commute with a generator of $L$. By the last assertion of Lemma~\ref{lem:kummernormalform}, their lifts send a Kummer generator $v$ to a rational function times $v$. It follows that they preserve the exponent residues of the Kummer equation. The map $t$ exchanges $0$ and $\infty$, while $b$ is transitive on the other $M$ ramification points. Therefore the residues at $0$ and $\infty$ are equal, say to $\alpha$, and the residues at the remaining $M$ points are all equal, say to $\beta$.

The cover is totally ramified at these $M+2$ points. Lemma~\ref{lem:kummernormalform} gives
\[
 \gcd(\alpha,k)=\gcd(\beta,k)=1.
\]
The sum of all exponent residues is zero modulo $k$, and hence
\[
 2\alpha+M\beta\equiv0\pmod k.
\]
Since $k$ is odd and $\beta$ is invertible modulo $k$, the integer $2\beta^{-1}$ is invertible modulo $k$. Replacing the Kummer generator by the corresponding power, as in Lemma~\ref{lem:kummernormalform}, changes $\beta$ into $2$. The congruence becomes
\[
 2\alpha+2M\equiv0\pmod k,
\]
and, since $k$ is odd, $\alpha\equiv-M\pmod k$. A rational function with precisely these residues is
\[
 u^{-M}(u^M+1)^2.
\]
Indeed, it has pole order $M$ at both $0$ and $\infty$, and zero order two at every root of $u^M=1$. Two rational functions on $\PP^1$ with the same orders modulo $k$ at every point differ by a $k$th power times a nonzero constant. Since $K$ is algebraically closed, the constant also has a $k$th root. Thus, after changing the Kummer generator, the equation is
\[
 v^k=u^{-M}(u^M+1)^2=u^M+u^{-M}.
\]
Rename $u,v$ as $x,y$ to obtain \eqref{eq:family}.

The equation is unchanged by $u\mapsto\zeta_Mu$ and $u\mapsto u^{-1}$ when $v$ is fixed. A lift of either map that commutes with $L$ sends $v$ to a constant $k$th root of unity times $v$. Choose $b$ as a generator of the subgroup $C_M\le G$. Since $b^M=1$, its multiplier is both a $k$th and an $M$th root of unity. The equality $\gcd(k,M)=1$ forces this multiplier to be one. Similarly, $t^2=1$ and $k$ is odd, so the multiplier of $t$ is one. Finally, $L$ acts by multiplying $v$ by $\zeta_k$. Renaming $u,v$ as $x,y$ gives the group in \eqref{eq:Haction}.

Let $k,M\ge3$ be odd and coprime. In \eqref{eq:family}, the right-hand side is
\[
 x^{-M}(x^M+1)^2.
\]
It has poles of order $M$ at $0,\infty$ and zeros of order two at each of the $M$ distinct roots of $x^M=1$. Since $\gcd(k,M)=1$ and $k$ is odd, all these orders are coprime to $k$. The Kummer equation therefore has degree $k$ and is totally ramified at precisely these $M+2$ points. Riemann--Hurwitz for the extension $K(x,y)/K(x)$ gives
\[
 2g-2=-2k+(M+2)(k-1)=M(k-1)-2.
\]
The three maps $a,b,t$ in Theorem~\ref{thm:index} preserve the equation. The map $a$ multiplies $y$ by a $k$th root of unity and hence commutes with $b,t$. The map $t$ sends $x$ to $x^{-1}$, so $tbt=b^{-1}$. Their restrictions to $K(x)$ give a dihedral group of order $2M$, while the kernel on $K(x)$ is $\langle a\rangle$ of order $k$. Thus the generated group has order $2kM$ and is $C_k\times D_{2M}$. Because $\gcd(k,M)=1$, the subgroup $\langle a,b\rangle$ is cyclic of order $kM$ and has index two. Finally,
\begin{equation}\label{eq:excess}
 |H|-(4g+4)=2kM-\bigl(2M(k-1)+4\bigr)=2M-4>0.
\end{equation}
This proves Theorem~\ref{thm:index}.
\end{proof}

\begin{remark}\label{rem:char2phenomenon}
The family in Theorem~\ref{thm:index} is large because the characteristic is two. To see this, let $K$ have characteristic different from two, with the characteristic not dividing $kM$, and consider the same equation
\[
 y^k=x^M+x^{-M}=\frac{x^{2M}+1}{x^M}.
\]
The polynomial $x^{2M}+1$ has $2M$ distinct roots. Thus the degree-$k$ Kummer cover of the $x$-line is totally ramified over these $2M$ points and over $0,\infty$. Riemann--Hurwitz gives
\[
 2g-2=-2k+(2M+2)(k-1),
\]
and hence
\[
 g=M(k-1).
\]
The maps $a,b,t$ in \eqref{eq:Haction} still generate a group $C_k\times D_{2M}$ of order
\[
 2kM=2g+2M\le3g<4g+4,
\]
where the inequality follows from $k\ge3$, and therefore $M\le g/2$. In characteristic two, however,
\[
 x^{2M}+1=(x^M+1)^2.
\]
The $2M$ simple zeros merge into $M$ double zeros. The Kummer cover is ramified over $M+2$ points, its genus becomes $M(k-1)/2$, and the same group has order $4g+2M>4g+4$.
\end{remark}

\begin{remark}
The classification concerns the triple $(\XX,H,G)$ with its prescribed cyclic subgroup. It gives the displayed action for every pair $(k,M)$, without a continuous parameter. It does not assert that $H=\Aut(\XX_{k,M})$, nor that curves arising from different pairs of the same genus cannot be isomorphic. Proposition~\ref{prop:autcriterion} below gives a criterion for the first question.
\end{remark}

We conclude this section with some information about the ramification of the group in Theorem~\ref{thm:index}. The equation \eqref{eq:family} gives
\[
 z=x^M,\qquad w=z+z^{-1}=y^k.
\]
Then $K(\XX)^G=K(z)$ and $K(\XX)^H=K(w)$: the extensions have degrees $kM$ and $2kM$, respectively. This also exhibits both quotients as rational curves.

\begin{proposition}\label{prop:Hramification}
For $\XX_{k,M}$ and the group in Theorem~\ref{thm:index}, the $G$-cover ramifies over $z=0,\infty,1$, and the corresponding point stabilizers have orders $kM,kM,k$. The $H$-cover is tamely ramified over $w=\infty$, where the stabilizer has order $kM$, and wildly ramified over $w=0$, where the stabilizer has order $2k$. It is unramified elsewhere. At a point $P$ above $w=0$, the ramification groups are
\[
 H_P^{(0)}=C_{2k},\qquad H_P^{(i)}=C_2\ (1\le i\le k),
 \qquad H_P^{(i)}=\{1\}\ (i>k).
\]
\end{proposition}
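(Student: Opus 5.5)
We work with the tower $\XX\to\XX/G=\PP^1_z\to\XX/H=\PP^1_w$, where $z=x^M$ and $w=z+z^{-1}=y^k$, and read off stabilizers from the explicit description of the ramification in the proof of Theorem~\ref{thm:index}.

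\emph{The $G$-cover.} The cover $\XX\to\PP^1_x$ is totally ramified of index $k$ at $x=0,\infty$ and at the $M$ roots of $x^M=1$, and nowhere else. The cover $\PP^1_x\to\PP^1_z$, $z=x^M$, is totally ramified of index $M$ at $x=0,\infty$, and it maps the roots of $x^M=1$ to $z=1$. By multiplicativity of ramification indices in the tower, the stabilizers over $z=0,\infty$ have order $kM$ and those over $z=1$ have order $k$. No other point ramifies, since neither layer ramifies elsewhere.

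\emph{The $H$-cover.} The map $z\mapsto w=z+z^{-1}$ is the quotient by the involution $z\mapsto z^{-1}$ induced by $t$. It is ramified only at its unique fixed point $z=1$ (Lemma~\ref{lem:P1involution}), which maps to $w=0$. It exchanges $z=0$ and $z=\infty$, both lying over $w=\infty$. Hence the $H$-stabilizer over $w=\infty$ equals the $G$-stabilizer there, of order $kM$. Since $kM$ is odd, this ramification is tame. Over $w=0$ the fiber consists of the $M$ points over $z=1$, each with $G$-stabilizer of order $k$. The $H$-orbit is this same set of size $M=2kM/|H_P|$, so $|H_P|=2k$ and the ramification is wild. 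Concretely, $H_P=\langle a\rangle\times\langle t'\rangle$ for a suitable involution $t'$ fixing $P$: for instance $t$ fixes the points over $x=1$, and conjugating it by powers of $b$ gives the involutions fixing the other points. So $H_P\cong C_{2k}$, cyclic because $a$ is central. Elsewhere $H$ is unramified, since $G$ is, and $w$ has no further branch points.

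\emph{Higher ramification groups.} The main step is computing the break $j$ at $P$ over $w=0$. By Lemma~\ref{lem:local}(1), with $t=k$, we have $H_P^{(i)}=C_2$ for $1\le i\le j$ and trivial for $i>j$, where $j$ is an odd multiple of $k$. Moreover $j/k$ is the pole order of the reduced Artin--Schreier equation of $\XX/\langle a\rangle\to\XX/H_P$. The cleanest route is to compute $j$ globally with Riemann--Hurwitz for $H$ over $\PP^1_w$:
\[
2g-2=-4kM+2\bigl(kM-1\bigr)+M\bigl((2k-1)+j\bigr).
\]
Here the fiber over $w=\infty$ has $2$ points with different exponent $kM-1$ each, and the fiber over $w=0$ has $M$ points with different exponent $2k-1+j$ each. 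Substituting $2g-2=M(k-1)-2$ gives
\[
Mk-M-2=-2kM-2+2kM-M+Mj,
\]
that is, $Mj=Mk$, so $j=k$. This yields the stated filtration. As a sanity check, locally the quotient $\PP^1_z\to\PP^1_w$ has a simple pole at $z=1$ in reduced form, since it is $z^2+wz+1=0$; this is consistent with $j/k=1$.

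The only delicate point is justifying that $H_P$ is cyclic and correctly counting fiber sizes. Everything else is bookkeeping.
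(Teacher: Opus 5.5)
Your proposal is correct. For the stabilizers and fibers it matches the paper in substance. The one small difference is how further short orbits are excluded: you use multiplicativity of ramification indices in the tower $\XX\to\PP^1_z\to\PP^1_w$, together with the fact that $z\mapsto z+z^{-1}$ ramifies only at $z=1$. The paper instead checks directly that a fixed point of an element of $H\setminus G$ satisfies $x^2=\zeta_M^i$, hence $x^M=1$ and $y=0$.

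The genuine difference is how you compute the break $j$.

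\emph{The paper's route.} It works locally at $P=(1,0)$. From $\ord_P(x+1)=k$ and $\ord_P(y)=2$ it builds the uniformizer $v=y^{(k+1)/2}/(x+1)$ and observes $t(v)=xv$. Then $\ord_P(t(v)-v)=\ord_P((x+1)v)=k+1$, which gives $j=k$ straight from the definition of $H_P^{(i)}$.

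\emph{Your route.} You use only the shape of the filtration: for $i\ge1$ only the involution survives, so $d_P=2k-1+j$. Riemann--Hurwitz for $H$ over $\PP^1_w$ then determines the single unknown $j$, once you feed in $2g=M(k-1)$. This is legitimate, because that genus comes from the Kummer cover $K(x,y)/K(x)$ in the converse part of Theorem~\ref{thm:index}, independently of the present statement. Your arithmetic $-4kM+2(kM-1)+M(2k-1+j)=M(j-1)-2$ is also right.

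\emph{Comparison.} The paper's computation is self-contained: it needs neither the genus nor the complete list of branched fibers with their exact sizes. Yours avoids finding a uniformizer, but a miscount of any fiber or stabilizer would silently shift $j$.

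Your sanity check is sound once made precise. Substituting $z=wY$ in $z^2+wz+1=0$ and then $Y\mapsto Y+1/w$ gives $Y^2+Y=1/w$, a simple pole at $w=0$. The completion of $\XX/G\to\XX/H$ at $z=1$ is the same local extension as $\XX/\langle a\rangle\to\XX/H_P$ at $P$. So this independently confirms $j/k=1$ via Lemma~\ref{lem:local}(1).
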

\begin{proof}
The $G$-cover is the composition of the degree-$k$ Kummer cover $K(x,y)/K(x)$ with the degree-$M$ cover $z=x^M$. At $x=0,\infty$, both covers are totally ramified. Since $k$ and $M$ are coprime, the stabilizer in $G$ there has order $kM$; these points lie over $z=0,\infty$. At $z=1$, the rational function $z+z^{-1}=(z+1)^2/z$ has a zero of order two. Because $k$ is odd, the degree-$k$ cover is totally ramified there, whereas $x^M=z$ is unramified. Its stabilizer therefore has order $k$.

The involution $t$ exchanges $z=0$ and $z=\infty$, and both map to $w=\infty$. The fiber of the $H$-cover over $w=\infty$ therefore consists of two points. Each is fixed by $G$, so its stabilizer has order $kM$; the cover is tamely ramified over $w=\infty$.

Consider now $w=0$. In characteristic two,
\[
 z+z^{-1}=0\quad\Longleftrightarrow\quad z^2+1=(z+1)^2=0,
\]
so $z=1$. Its fiber consists of the $M$ points $(x,y)$ with $x^M=1$ and $y=0$. At $P=(1,0)$, the elements $a$ and $t$ fix both coordinates. Hence
\[
 H_P=\langle a,t\rangle\cong C_k\times C_2=C_{2k}.
\]
The stabilizers at the other $M-1$ points are conjugate to $H_P$.

There is no further short orbit. Indeed, an element of $H\setminus G$ induces the automorphism $x\mapsto\zeta_M^i/x$ of $K(x)$, for some $i$. A fixed point satisfies $x^2=\zeta_M^i$. Raising this equality to the $M$th power gives $(x^M)^2=1$, and characteristic two yields $x^M=1$. The equation of the curve then gives $y=0$. Thus every fixed point of an element in $H\setminus G$ lies in the fiber over $w=0$.

We compute the ramification groups at $P=(1,0)$ using a local parameter. The equation $y^k=x^{-M}(x^M+1)^2$ gives $\ord_P(x+1)=k$ and $\ord_P(y)=2$: the factor $x^M+1$ has a simple zero at $x=1$, and the degree-$k$ Kummer cover is totally ramified there. As $k$ is odd, the function
\[
 v=\frac{y^{(k+1)/2}}{x+1}
\]
has order $2(k+1)/2-k=1$, so it is a local parameter. Since $t$ sends $x$ to $x^{-1}$ and fixes $y$, we find $t(v)=xv$. Hence $\ord_P(t(v)-v)=\ord_P((x+1)v)=k+1$. By the definition of the groups $H_P^{(i)}$, the involution $t$ belongs to them for $1\le i\le k$ and not for $i>k$. The odd-order subgroup $\langle a\rangle$ occurs only in $H_P^{(0)}$. This proves the stated filtration.
\end{proof}

The expression $x^n+x^{-n}$ is a polynomial in $u=x+x^{-1}$. To see this directly, define
\[
 P_0(U)=0,\qquad P_1(U)=U,\qquad
 P_n(U)=U P_{n-1}(U)+P_{n-2}(U).
\]
The recurrence directly gives
\begin{equation}\label{eq:polynomial}
 P_n(x+x^{-1})=x^n+x^{-n}.
\end{equation}

\begin{proposition}\label{prop:involutionquotient}
For $t:(x,y)\mapsto(x^{-1},y)$, the quotient has equation
\begin{equation}\label{eq:quotientmodel}
 \XX_{k,M}/\langle t\rangle:\quad y^k=P_M(u),
 \qquad u=x+x^{-1},
\end{equation}
and
\begin{equation}\label{eq:quotientgenus}
 g(\XX_{k,M}/\langle t\rangle)=\frac{(M-1)(k-1)}4,
 \qquad
 \gamma(\XX_{k,M})=2\gamma(\XX_{k,M}/\langle t\rangle).
\end{equation}
In particular,
\begin{equation}\label{eq:rankbound}
 \gamma(\XX_{k,M})\le g(\XX_{k,M})-\frac{k-1}{2}<g(\XX_{k,M}).
\end{equation}
Thus every curve in Theorem~\ref{thm:index} has even $2$-rank and none is ordinary.
\end{proposition}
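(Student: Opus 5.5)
The plan has three parts: identify the fixed field of $t$, compute the quotient genus by Riemann--Hurwitz, and obtain the $2$-rank relation from the Deuring--Shafarevich formula \eqref{eq:DS} applied to $S=\langle t\rangle$.

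\textbf{Quotient model.} The fixed field of $t$ contains $u=x+x^{-1}$ and $y$. By \eqref{eq:polynomial}, $y^k=x^M+x^{-M}=P_M(u)$. The field $K(x,y)$ has degree two over $K(u,y)$, because $x$ satisfies $x^2+ux+1=0$ and $t$ is nontrivial. Hence $K(u,y)$ is the full fixed field, and it is given by \eqref{eq:quotientmodel}.

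\textbf{Genus of the quotient.} By Proposition~\ref{prop:Hramification}, the fixed points of $t$ on $\XX_{k,M}$ lie over $w=0$ and have the form $(x,0)$ with $x^2=1$. The only such point is $P=(1,0)$, and at this point the different exponent of $\langle t\rangle$ is $(2-1)+k=k+1$, by the filtration computed there (the $H_P^{(i)}$ contain $t$ exactly for $0\le i\le k$). Riemann--Hurwitz for the degree-two cover then gives
\[
 M(k-1)-2=2\bigl(2g(\XX/\langle t\rangle)-2\bigr)+(k+1).
\]
Solving yields $4g(\XX/\langle t\rangle)=Mk-M-k+1=(M-1)(k-1)$.

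\textbf{The $2$-rank.} Apply \eqref{eq:DS} with $S=\langle t\rangle$. There is exactly one short orbit, namely $\{P\}$, of length one. This gives
\[
 \gamma(\XX)-1=2\bigl(\gamma(\XX/\langle t\rangle)-1\bigr)+1,
\]
which is $\gamma(\XX)=2\gamma(\XX/\langle t\rangle)$. Using $\gamma\le g$ for the quotient,
\[
 \gamma(\XX)\le\frac{(M-1)(k-1)}{2}=g-\frac{k-1}{2},
\]
and $k\ge3$ makes this strictly less than $g$. Evenness of the $2$-rank is immediate from the doubling relation.

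\textbf{Main obstacle.} The only delicate point is confirming that $t$ has a unique fixed point on $\XX_{k,M}$ and that its different exponent there is $k+1$. Both facts already follow from Proposition~\ref{prop:Hramification} and its proof: that argument shows that fixed points of elements of $H\setminus G$ satisfy $x^M=1$, and specializing to $t$ gives $x^2=1$, hence $x=1$ in characteristic two.
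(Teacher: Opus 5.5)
Your proof is correct and follows the paper's argument step for step. Both identify the fixed field via the quadratic $x^2+ux+1=0$, locate the unique fixed point over $x=1$ with different exponent $k+1$ read off from the filtration in Proposition~\ref{prop:Hramification}, compute the genus by Riemann--Hurwitz, and get the $2$-rank relation from the Deuring--Shafarevich formula with a single short orbit.
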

\begin{proof}
The functions $u=x+x^{-1}$ and $y$ are fixed by $t$ and satisfy \eqref{eq:quotientmodel} by \eqref{eq:polynomial}. Conversely, $x$ satisfies $x^2+ux+1=0$ over $K(u,y)$. This quadratic equation has degree two in the function field, because $t$ exchanges its roots $x$ and $x^{-1}$ nontrivially. Hence $K(u,y)$ is precisely the fixed field.

A fixed point of $t$ must lie over a solution of $x=x^{-1}$. In characteristic two this equation is $(x+1)^2=0$, so $x=1$ is the only possibility. The Kummer cover is totally ramified there and has a single point above it. Proposition~\ref{prop:Hramification} shows that the higher ramification groups of $\langle t\rangle$ have order two for $1\le i\le k$. Thus its different exponent at the unique fixed point is $(2-1)+k(2-1)=k+1$. Riemann--Hurwitz for this degree-two cover gives
\[
 M(k-1)-2=2(2g(\XX_{k,M}/\langle t\rangle)-2)+(k+1).
\]
Solving this equality gives
\[
 4g(\XX/\langle t\rangle)=(M-1)(k-1),
\]
which is the first assertion of \eqref{eq:quotientgenus}. For the $2$-rank, the one fixed point is the only short orbit of $\langle t\rangle$. Formula \eqref{eq:DS} becomes
\[
 \gamma(\XX)-1=2(\gamma(\XX/\langle t\rangle)-1)+(2-1),
\]
which simplifies to $\gamma(\XX)=2\gamma(\XX/\langle t\rangle)$. Finally, $\gamma(\XX/\langle t\rangle)\le g(\XX/\langle t\rangle)$, whence
\[
 \gamma(\XX)\le\frac{(M-1)(k-1)}2
               =g(\XX)-\frac{k-1}{2}<g(\XX).
\]
This proves \eqref{eq:rankbound}.
\end{proof}

\begin{example}\label{ex:g5}
The curve
\[
 \XX_{3,5}:\quad y^3=x^5+x^{-5}
\]
has genus five and the explicit group $C_3\times D_{10}$ of order $30=6g>4g+4=24$. Its cyclic subgroup of index two has order fifteen. The quotient by the subgroup $\langle t\rangle$, where $t(x,y)=(x^{-1},y)$, has genus two and equation $y^3=u^5+u^3+u$. Proposition~\ref{prop:involutionquotient} gives $\gamma(\XX_{3,5})\le4$, so even this curve at the top of the spectrum is not ordinary.
\end{example}
\section{Spectrum of orders and dihedral groups}
We now determine the possible orders of the groups in Theorem~\ref{thm:index}, prove the bound $6g$, and then study dihedral groups.

We first prove formula \eqref{eq:spectrum}. Equation \eqref{eq:NminusM} gives
\[
 g=M\frac{k-1}{2}.
\]
Since $k$ is odd, $(k-1)/2$ is an integer. Hence $M\mid g$.

Conversely, let $M\mid g$ be odd, with $M\ge3$, and put
\[
 k=\frac{2g}{M}+1.
\]
Then $k$ is odd and $k\ge3$. The pair $(k,M)$ occurs in Theorem~\ref{thm:index} if and only if $\gcd(k,M)=1$. For every such pair, the theorem gives a group of order
\[
 |H|=2kM=4g+2M.
\]
This is exactly the set in \eqref{eq:spectrum}.

We next prove the bound. Since $M\mid g$ and $M\ge3$, we have $M\le g$. Therefore
\[
 |H|=4g+2M\le6g.
\]
Equality holds if and only if $M=g$. In that case
\[
 k=\frac{2g}{M}+1=3.
\]
The conditions of Theorem~\ref{thm:index} now say that $g$ is odd, $g\ge3$, and $3\nmid g$. The first such genus is five. This proves the equality statement in Corollary~\ref{cor:spectrumintro}.

It remains to prove that the curves in Theorem~\ref{thm:index} are not hyperelliptic. Suppose otherwise, and let $\iota$ be the hyperelliptic involution. It is central in $\Aut(\XX)$. The group $G=C_N$ has odd order, so $G\cap\langle\iota\rangle=\{1\}$. Hence
\[
 \langle G,\iota\rangle\cong C_N\times C_2\cong C_{2N}.
\]
Moreover, $N>2g+2$. Thus Theorem~\ref{thm:cyclic} applies to this cyclic group. In case~\textup{(II)}, $2N=4g+2$, so $N=2g+1$. In case~\textup{(III)}, $2N=2g+2$, so $N=g+1$. Both conclusions contradict $N>2g+2$. Therefore $\XX$ is not hyperelliptic.

\begin{corollary}\label{cor:universal}
Every group $H\le\Aut(\XX)$ containing a cyclic subgroup of index two satisfies $|H|\le6g$. Equality holds precisely for the dihedral groups of order twelve on the genus-two curves in Theorem~\ref{thm:dihedral}, and for the groups $C_3\times D_{2g}$ on $\XX_{3,g}$, where $g\ge5$ is odd and $3\nmid g$.
\end{corollary}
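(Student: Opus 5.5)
We split according to whether $|H|>4g+4$ or $|H|\le4g+4$. If $|H|>4g+4$, Theorem~\ref{thm:index} applies and gives $|H|=2kM=4g+2M$ with $M\mid g$ odd. The bound $|H|\le6g$ and the description of equality as $C_3\times D_{2g}$ on $\XX_{3,g}$, with $g\ge5$ odd and $3\nmid g$, are then exactly the content of Corollary~\ref{cor:spectrumintro}, which was just proved. So the remaining work concerns groups with $|H|\le4g+4$.

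Since $g\ge2$, we have $4g+4\le6g$, with equality exactly when $g=2$. This gives $|H|\le6g$ in all cases. For $g\ge3$ the inequality is strict, so $|H|=6g$ forces $|H|>4g+4$, and we are back in the first case.

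It remains to treat $g=2$ with $|H|=12=4g+4$. Here $H$ has a cyclic subgroup $G$ of order six. By Theorem~\ref{thm:cyclic}, an even cyclic group with $N=6\ge2g+1=5$ occurs only in case~(III), where $N=2g+2$ and $m=3$. The curve is therefore $y^2+y=c/(x^3+1)$. By Proposition~\ref{prop:normalizers}, $H$ lies in the normalizer $D_{4m}=D_{12}$. Since $|H|=12$, $H$ is exactly this dihedral group, and it is the even-genus equality case $y^2+y=c/(x^{g+1}+1)$ of Theorem~\ref{thm:dihedral}.

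Conversely, these genus-two dihedral groups of order twelve have a cyclic subgroup of index two (namely $G$) and attain $6g$. The groups $C_3\times D_{2g}$ attain $6g$ by Theorem~\ref{thm:index}.

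For completeness we also rule out odd $|G|=6$, which is impossible since $6$ is even. We should also check that no other group of order $12$ containing a $C_6$ arises. This is automatic from the normalizer argument, because an index-two subgroup is normal, so $H\le\NN_{\Aut(\XX)}(G)$.

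The main subtlety is making sure the $g=2$ boundary case is handled exactly. We must confirm that the curve in case~(III) with $m=3$ coincides with the first family of Theorem~\ref{thm:dihedral} for $g=2$, and that $N=2g+2=6$ is the only admissible cyclic order there. Everything else reduces to results already established.
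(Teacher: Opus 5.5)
Your proposal is correct and follows the paper's proof essentially step for step. The case $|H|>4g+4$ is handled by Corollary~\ref{cor:spectrumintro}; for $|H|\le4g+4$, equality forces $g=2$ and $|G|=6$, hence case~(III) of Theorem~\ref{thm:cyclic} (case~(II) would require $|G|=10$), and Proposition~\ref{prop:normalizers} then identifies $H$ with $D_{12}$. Your explicit converse check is a harmless addition, and the remark about odd $|G|=6$ is vacuous.
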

\begin{proof}
If $|H|\le4g+4$, then $4g+4\le6g$ for $g\ge2$. Equality in this range requires $g=2$ and $|H|=12$. The cyclic subgroup has order six. By Theorem~\ref{thm:cyclic}, the curve belongs to case~\textup{(III)}: in case~\textup{(II)} the cyclic order in genus two is ten. Proposition~\ref{prop:normalizers} shows that $H$ is the dihedral group of order twelve. If $|H|>4g+4$, Corollary~\ref{cor:spectrumintro} gives the bound and its equality cases.
\end{proof}

\begin{corollary}\label{cor:powers2}
If $g$ is a power of two, then no group with a cyclic subgroup of index two has order greater than $4g+4$.
\end{corollary}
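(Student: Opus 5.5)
The plan is to read this off Corollary~\ref{cor:spectrumintro}. By that corollary, any group $H$ with a cyclic subgroup of index two and $|H|>4g+4$ has order $4g+2M$ for some odd $M\ge3$ with $M\mid g$. It therefore suffices to show that when $g=2^e$ no such $M$ exists.

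The only odd divisor of a power of two is $1$. So there is no odd $M\ge3$ dividing $g$, and the set $\mathcal S_2(g)$ in \eqref{eq:spectrum} is empty.

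To make the appeal to the corollary explicit, we trace the divisibility back to Theorem~\ref{thm:index}. Suppose $|H|>4g+4$. The theorem gives odd coprime $k,M\ge3$ with $2g=M(k-1)$. Then $g=M\cdot\frac{k-1}{2}$, and $\frac{k-1}{2}$ is an integer because $k$ is odd. Hence the odd integer $M\ge3$ divides $g=2^e$, which is a contradiction. There is no real obstacle here; the only point is that $(k-1)/2$ is an integer, so that $M$ genuinely divides $g$ rather than merely $2g$. Even without this, $M$ odd dividing $2g=2^{e+1}$ already forces $M=1$.
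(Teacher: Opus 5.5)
Your proposal is correct and follows the paper's own argument: the spectrum \eqref{eq:spectrum} (equivalently, Theorem~\ref{thm:index} via $g=M(k-1)/2$) requires an odd divisor $M\ge3$ of $g$, and a power of two has none. Your explicit trace back through $2g=M(k-1)$ just spells out what the paper gets from Corollary~\ref{cor:spectrumintro}.
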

\begin{proof}
By \eqref{eq:spectrum}, an order above $4g+4$ requires an odd divisor $M\mid g$ with $M\ge3$. A power of two has no such divisor.
\end{proof}

For each fixed genus, formula \eqref{eq:spectrum} reduces the classification to a short test on the odd divisors of $g$. The genera $2,3,4,8,10,12,16$ have empty spectrum. Table~\ref{tab:spectrum} gives the nonempty spectra through genus twenty.

\begin{table}[htbp]
\centering
\caption{The complete nonempty spectra for $2\le g\le20$. Each entry is $(k,M;|H|)$.}\label{tab:spectrum}
\begin{tabular}{cl@{\qquad}cl}
\toprule
$g$&$(k,M;|H|)$&$g$&$(k,M;|H|)$\\
\midrule
5&$(3,5;30)$&14&$(5,7;70)$\\
6&$(5,3;30)$&15&$(7,5;70),\ (11,3;66)$\\
7&$(3,7;42)$&17&$(3,17;102)$\\
9&$(7,3;42)$&18&$(5,9;90),\ (13,3;78)$\\
11&$(3,11;66)$&19&$(3,19;114)$\\
13&$(3,13;78)$&20&$(9,5;90)$\\
\bottomrule
\end{tabular}
\end{table}

\begin{proof}[Proof of Theorem~\ref{thm:dihedral}]
Let $D=D_{2N}$ be dihedral. Suppose that $|D|>4g+4$. By Theorem~\ref{thm:index},
\[
 D\cong C_k\times D_{2M},
\]
where $k,M\ge3$ are odd. Let $c$ be a generator of $C_k$ and let $t$ be a reflection. In the direct product, $tc=ct$, so $tct=c$. On the other hand, $c$ belongs to the rotation subgroup of the dihedral group $D$, and the defining relation of a dihedral group gives $tct=c^{-1}$. Hence $c=c^{-1}$, so $c^2=1$. This is impossible because $c$ has odd order $k\ge3$. Therefore $|D|\le4g+4$.

Suppose $|D|=4g+4$. The rotation subgroup has order $N=2g+2$. By Theorem~\ref{thm:cyclic}, $4\nmid N$. Since $N=2(g+1)$, this implies that $g+1$ is odd; hence $g$ is even. Case~\textup{(II)} of that theorem would give $N=4g+2$, which is impossible for $g\ge2$. Thus $\XX$ belongs to case~\textup{(III)}, with $m=g+1$. The rotation subgroup is normal in $D$, so $D$ is contained in its normalizer. Proposition~\ref{prop:normalizers} says that this normalizer is dihedral of order $4m=4g+4$. Hence it is equal to $D$. Conversely, the automorphism in \eqref{eq:reflection} gives equality for every even $g\ge2$ and every $c\ne0$.

We next exclude $|D|=4g+2$. In this case the rotation subgroup is $G=C_N$, where $N=2g+1$ is odd. By Theorem~\ref{thm:cyclic}, the cover $\XX\to\XX/G$ ramifies over three points. A reflection $t$ fixes one of them and exchanges the other two. As in the proof of Theorem~\ref{thm:index}, normalize the exponent residues at the exchanged points to $1$ and $s$. If $\rho$ generates $G$, the dihedral relation is
\[
 t\rho t=\rho^{-1}.
\]
Lemma~\ref{lem:kummernormalform} then gives $s\equiv-1\pmod N$. The residue at the third point is
\[
 -1-s\equiv0\pmod N.
\]
Thus the cover is unramified over that point, a contradiction. Hence a dihedral group cannot have order $4g+2$.

Now let $g$ be odd. We have already proved that equality $|D|=4g+4$ requires $g$ even. We have also excluded $|D|=4g+2$. Since $|D|$ is even, it follows that $|D|\le4g$.

Let $g=m\ge3$ be odd and suppose that $D=D_{4m}$ acts on a curve of genus $m$. Its rotation subgroup is $G=C_{2m}$. Although $2m<2g+1$, we still have $2m>2g-2$, so Lemma~\ref{lem:rational} applies. In particular $\XX/G$ is rational and \eqref{eq:budget} gives
\[
 \sum_Q\delta_Q=2+\frac{2m-2}{2m}=3-\frac1m<3.
\]
By Lemma~\ref{lem:rational}, some stabilizer contains the involution in $G$. Thus $\XX\to\XX/G$ is wildly ramified over at least one point. Each such point contributes at least one, so there are at most two.

Suppose that there is exactly one such point $Q$. Write $2t$ for the order of the stabilizer above $Q$, where $t\mid m$ is odd. Every reflection of $D$ induces an involution on the rational curve $\XX/G$ and must fix $Q$. The fiber over $Q$ contains
\[
 \frac{2m}{2t}=\frac mt
\]
points. This number is odd. The reflection permutes the fiber in orbits of length one or two, so it fixes a point $P$ in the fiber.

The stabilizer $D_P$ contains $G_P=C_{2t}$ and the reflection. Since $[D:G]=2$, it has order $4t$ and is dihedral. If $t>1$, its Sylow $2$-subgroup is not normal: conjugating a reflection by a nontrivial rotation of odd order gives a different reflection. This contradicts the normality of the Sylow $2$-subgroup of a point stabilizer recalled in Section~2. Hence $t=1$.

When $t=1$, the degree-$m$ Kummer cover $\XX/C_2\to\XX/G$ is unramified over $Q$. It must ramify over at least two other points. Each of these points contributes at least $2/3$ to the cover $\XX\to\XX/G$. If there were three, the total would be at least $1+3(2/3)=3$, contrary to the preceding inequality. Hence there are exactly two. Lemma~\ref{lem:kummer} shows that the ramification index at each is $m$ and that $\XX/C_2$ is rational. Using \eqref{eq:threebranchwild} with $g=m$ gives
\[
 2m-2=2m-4+m(j-1),\qquad m(j-1)=2.
\]
Since $m\ge3$, the equation $m(j-1)=2$ has no integer solution. Therefore $\XX\to\XX/G$ is wildly ramified over two points.

Suppose first that one of the two stabilizers has order $2t$ with $t>1$. Its contribution is at least $3/2-1/(2t)\ge4/3$ by Lemma~\ref{lem:local}(1). The other point contributes at least one. If the cover were also tamely ramified over another point, that point would contribute at least $2/3$. The total would be at least three, contrary to \eqref{eq:budget}. Thus there is no further ramification point.

The odd part of the second stabilizer must also be nontrivial. Otherwise the degree-$m$ Kummer cover would ramify over only one point, which is impossible by Lemma~\ref{lem:kummer}. If both stabilizers had order two, the Kummer cover would be unramified over them and would have to ramify over at least two further points. The total contribution to $\XX\to\XX/G$ would then be at least
\[
 2+2\left(\frac23\right)>3,
\]
again impossible. We conclude that the two stabilizers have nontrivial odd part and that there are no other ramification points.

Thus the degree-$m$ Kummer cover ramifies over exactly these two points. Lemma~\ref{lem:kummer} gives ramification index $m$ at both and shows that the quotient by the central involution is rational. Denote the two integers from Lemma~\ref{lem:local}(1) by $j_1,j_2$. The stabilizers for $\XX\to\XX/G$ have order $2m$, so both fibers consist of one point. Riemann--Hurwitz gives
\[
 2m-2=-4m+(2m-1+j_1)+(2m-1+j_2)=j_1+j_2-2.
\]
This equality is $j_1+j_2=2m$. Each $j_i$ is a positive multiple of $m$ by Lemma~\ref{lem:local}(1). Therefore $j_1=j_2=m$.

Choose a coordinate $x$ on the rational quotient $\XX/C_2$ so that the two points over which the Kummer cover ramifies are $x=0,\infty$. By Lemma~\ref{lem:P1cyclic}, the quotient group $G/C_2=C_m$ acts by $x\mapsto\zeta_mx$. A coordinate on $\XX/G$ is $z=x^m$. Lemma~\ref{lem:local}(1) says that the intermediate quadratic Artin--Schreier equation over $K(z)$ has simple poles at $z=0,\infty$ and no others. After removing a constant term, it has the form $y^2+y=Az+B/z$, with $A,B\ne0$. Substituting $z=x^m$ gives
\[
 y^2+y=A x^m+B x^{-m},\qquad A,B\ne0.
\]
Choose a dilation $x\mapsto ax$ such that $Aa^m=B a^{-m}$; this is possible over the algebraically closed field $K$. The two new coefficients are equal and their product is $AB$, so their common value is $c=\sqrt{AB}\ne0$. We obtain
\[
 y^2+y=c(x^m+x^{-m}).
\]
The three automorphisms
\[
 x\mapsto\zeta_mx,\qquad y\mapsto y+1,
 \qquad (x,y)\mapsto(x^{-1},y)
\]
preserve this equation. The first two commute and generate a cyclic group of order $2m$. The third map conjugates a generator of this cyclic group to its inverse. Hence the three maps generate a dihedral group of order $4m$.

It remains to compare this group with the original one. A reflection induces the automorphism $x\mapsto a/x$ of $K(x)$, for some $a\in K^*$. Under this substitution the right-hand side becomes
\[
 c\bigl(a^m x^{-m}+a^{-m}x^m\bigr).
\]
By Lemma~\ref{lem:ASlift}, its difference from $c(x^m+x^{-m})$ must be of the form $h^2+h$. If $a^m\ne1$, this difference has a pole of odd order $m$ at $0$ and at infinity, which is impossible by the last assertion of that lemma. Hence $a^m=1$. Composing the reflection with a suitable rotation changes $a$ to one. The resulting lift of $x\mapsto x^{-1}$ differs from $(x,y)\mapsto(x^{-1},y)$ by at most the central involution $y\mapsto y+1$, which already belongs to the rotation group. Hence the original group is the displayed group.

Finally, the Artin--Schreier equation has two poles, each of order $m$. Formula \eqref{eq:ASgenus} gives
\[
 2g-2=-4+2(m+1)=2m-2,
 \qquad \gamma(\XX)=2-1=1.
\]

It remains to determine the full automorphism group of each curve attaining equality. In both cases put
\[
 \sigma:(x,y)\longmapsto(x,y+1),
 \qquad A=\Aut(\XX).
\]
The fixed field of $\sigma$ is $K(x)$. Hence $\sigma$ is the hyperelliptic involution and, since $g\ge2$, it is central in $A$. It follows that every element of $A$ leaves $K(x)$ invariant. The kernel of the induced homomorphism
\[
 A\longrightarrow\operatorname{Aut}(K(x))\cong\operatorname{PGL}(2,K)
\]
is $\langle\sigma\rangle$. We may therefore regard
\[
 \overline A=A/\langle\sigma\rangle
\]
as a finite subgroup of $\operatorname{PGL}(2,K)$. It preserves the set $B$ of points of the $x$-line over which $K(\XX)/K(x)$ ramifies. If the equation is $y^2+y=f(x)$, Lemma~\ref{lem:ASlift} says that a projectivity $\alpha$ belongs to $\overline A$ if and only if
\[
 f(\alpha(x))+f(x)=h(x)^2+h(x)
\]
for some $h(x)\in K(x)$. The group $\overline A$ contains
\[
 \overline D=\langle x\mapsto\zeta_mx,\ x\mapsto x^{-1}\rangle
 \cong D_{2m},
\]
where $m=g$ in odd genus and $m=g+1$ in even genus.

Assume first that $g$ is odd. Then $f(x)=c(x^m+x^{-m})$ and $B=\{0,\infty\}$. Every element of $\overline A$ has the form $x\mapsto ax$ or $x\mapsto a/x$. For $\alpha(x)=ax$ we have
\[
 f(\alpha(x))+f(x)
 =c(a^m+1)x^m+c(a^{-m}+1)x^{-m}.
\]
If $a^m\ne1$, this function has a pole of odd order $m$ at both $0$ and $\infty$, so it cannot be of the form $h^2+h$. Thus $a^m=1$. The same calculation applies to $\alpha(x)=a/x$. Consequently $\overline A=\overline D$, and therefore $A$ is the displayed dihedral group of order $4m=4g$.

Assume now that $g$ is even. Then $m=g+1$ is odd,
\[
 f(x)=\frac{c}{x^m+1},
 \qquad B=\{x\in K:x^m=1\}.
\]
All points of $B$ are simple poles of $f$.

Let $\tau\in\overline A$ have odd order $n>1$. We claim that $\tau$ fixes no point of $B$. Suppose that it fixes $P\in B$. By Lemma~\ref{lem:P1cyclic}, we can choose a coordinate $u$ on the rational curve such that $P$ is the infinite point and
\[
 \tau(u)=\omega u,
\]
where $\omega$ has order $n$. Since $f$ has a simple pole at $P$, we can write
\[
 f=\lambda u+g(u),
\]
where $\lambda\ne0$ and $g(u)$ is regular at $P$. It follows that
\[
 f(\tau(u))+f(u)=\lambda(\omega+1)u+g(\omega u)+g(u).
\]
The last two terms are regular at $P$, while
\[
 \lambda(\omega+1)u\ne0.
\]
Therefore $f(\tau(u))+f(u)$ has a simple pole at $P$. On the other hand, every pole of $h^2+h$ has even order. This contradicts Lemma~\ref{lem:ASlift} and proves the claim.

Next, let $E\le\overline A$ be an elementary abelian $2$-subgroup. By Lemma~\ref{lem:Dickson}, $E$ has one common fixed point and acts freely outside that point. We now apply the three possibilities in the same lemma.

Suppose first that $\overline A$ is dihedral of order $2n$, with $n$ odd. Its rotation subgroup contains the cyclic group of order $m$ in $\overline D$, so $m\mid n$. On the other hand, the claim above shows that the rotation subgroup acts freely on $B$. Thus every orbit on $B$ has length $n$, and $n\mid |B|=m$. Hence $n=m$ and $\overline A=\overline D$.

Suppose next that $\overline A=E\rtimes C_n$. Since $E$ is normal, its unique fixed point $Q$ is fixed by all of $\overline A$. In particular, it is fixed by the subgroup $C_m$ generated by $x\mapsto\zeta_mx$. Hence $Q$ is $0$ or $\infty$. The claim shows that $Q\notin B$. Therefore $E$ acts freely on $B$, so $|E|$ divides $|B|=m$. This is impossible, because $|E|$ is a power of two greater than one and $m$ is odd.

Finally, suppose that $\overline A\cong\operatorname{PSL}(2,q)$, where $q=2^e$. The case $q=2$ is dihedral and has already been considered, so assume $q\ge4$. By Lemma~\ref{lem:Dickson}, the only orbit of odd length in the standard action on $\PP^1(K)$ is $\PP^1(\FF_q)$, of length $q+1$. Since the invariant set $B$ has odd cardinality, it must contain this orbit. But every point of $\PP^1(\FF_q)$ is fixed by an element of odd order $q-1\ge3$: for example, the stabilizer of the infinite point contains the maps $x\mapsto ax$, with $a\in\FF_q^*$. This contradicts the claim.

The three cases show that $\overline A=\overline D$. Hence $|A|=2|\overline A|=4m=4g+4$. Since $A$ contains the displayed dihedral group of this order, the two groups are equal. This completes the proof.
\end{proof}

\section{Hermitian curves and maximality}\label{sec:hermitian}
Throughout this section $q=2^h$ and $\mathcal H_q$ denotes the Hermitian curve over $\FF_{q^2}$, which we take in the form
\begin{equation}\label{eq:hermitian}
 \mathcal H_q:\quad y^{q+1}=x^q+x .
\end{equation}
It is $\FF_{q^2}$-maximal of genus $q(q-1)/2$, and every curve $\FF_{q^2}$-covered by $\mathcal H_q$ is $\FF_{q^2}$-maximal; see \cite[Chapter 10]{HKT} and \cite{GSX}.

Our first observation is that the Hermitian curve itself belongs to the family $\XX_{k,M}$.

\begin{theorem}\label{thm:hermitianmember}
Let $q=2^h\ge4$. The map $(x,y)\mapsto(x,\,y^2/x)$ is a $K$-isomorphism of $\mathcal H_q$ onto
\[
 \XX_{q+1,q-1}:\quad Y^{q+1}=x^{q-1}+x^{-(q-1)} .
\]
In particular, $\mathcal H_q$ admits the group $C_{q+1}\times D_{2(q-1)}$ of Theorem~\ref{thm:index}, of order $2(q^2-1)=4g+2(q-1)$, whose cyclic subgroup of index two has order $q^2-1=2g+q-1$.
\end{theorem}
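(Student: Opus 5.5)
The proof is a direct field computation, followed by an appeal to Theorem~\ref{thm:index}. Put $Y=y^2/x$. From $y^{q+1}=x^q+x=x(x^{q-1}+1)$ we get
\[
 Y^{q+1}=\frac{y^{2(q+1)}}{x^{q+1}}=\frac{x^2(x^{q-1}+1)^2}{x^{q+1}}
 =x^{-(q-1)}\bigl(x^{q-1}+1\bigr)^2=x^{q-1}+x^{-(q-1)},
\]
where the last equality uses characteristic two: $(x^{q-1}+1)^2=x^{2(q-1)}+1$. So $(x,Y)$ satisfies the equation of $\XX_{q+1,q-1}$, and the map $(x,y)\mapsto(x,y^2/x)$ sends $\mathcal H_q$ to $\XX_{q+1,q-1}$. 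This gives an inclusion $K(x,Y)\subseteq K(x,y)$.

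To show the map is an isomorphism we must prove $K(x,Y)=K(x,y)$. Both fields have degree $q+1$ over $K(x)$: for $\mathcal H_q$ this is the standard Kummer description, and for $\XX_{q+1,q-1}$ it follows from the converse part of Theorem~\ref{thm:index}, since $k=q+1$ and $M=q-1$ are odd and coprime. It is cleaner, however, to recover $y$ explicitly. Since $\gcd(2,q+1)=1$, we have $y=y^{q+2}/y^{q+1}=(y^2)^{(q+2)/2}/y^{q+1}$, and $(q+2)/2$ is an integer because $q$ is even. Substituting $y^2=xY$ and $y^{q+1}=x^q+x$ expresses $y$ as $(xY)^{(q+2)/2}/(x^q+x)$, which lies in $K(x,Y)$. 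Hence the two fields coincide and the map is birational, hence an isomorphism of nonsingular models.

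The group statement then follows from Theorem~\ref{thm:index} with $k=q+1$ and $M=q-1$. The hypotheses $k,M\ge3$, both odd and coprime, hold because $q\ge4$ and $\gcd(q+1,q-1)$ divides $2$ while both are odd. The genus is $2g=M(k-1)=(q-1)q$, which matches $g(\mathcal H_q)=q(q-1)/2$. The group order is $2kM=2(q^2-1)$, and $4g+2M=2q(q-1)+2(q-1)=2(q^2-1)$, so the order is $4g+2(q-1)$. The cyclic subgroup $\langle a,b\rangle$ has order $kM=q^2-1=2g+q-1$.

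The only step that needs real care is showing the map is an isomorphism rather than merely a morphism; the explicit inversion formula for $y$ settles this.
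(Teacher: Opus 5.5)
Your proposal is correct and follows the paper's proof: you use the same substitution $Y=y^2/x$ and the same characteristic-two squaring identity, then apply Theorem~\ref{thm:index} with $(k,M)=(q+1,q-1)$ and check the genus, the order and the index-two subgroup in the same way. The only difference is that you prove $K(x,Y)=K(x,y)$ through the explicit inverse $y=(xY)^{(q+2)/2}/(x^q+x)$, whereas the paper compares degrees over $K(x)$, using the pole of order $q-1$ at $x=0$ and $\gcd(q+1,q-1)=1$; your route is slightly more direct and also writes down the inverse map.
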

\begin{proof}
On $\mathcal H_q$ put $Y=y^2/x$. Squaring \eqref{eq:hermitian} gives $y^{2(q+1)}=(x^q+x)^2$, whence
\[
 Y^{q+1}=\frac{(x^q+x)^2}{x^{q+1}}=\frac{x^{2q}+x^2}{x^{q+1}}=x^{q-1}+x^{-(q-1)} .
\]
Thus $K(x,Y)\subseteq K(x,y)=K(\mathcal H_q)$ and $K(x,Y)$ is the function field of $\XX_{q+1,q-1}$. The field $K(x,y)$ has degree $q+1$ over $K(x)$ by \eqref{eq:hermitian}. The subfield $K(x,Y)$ has the same degree: the function $x^{q-1}+x^{-(q-1)}$ has a pole of order $q-1$ at $x=0$, and
\[
 \gcd(q+1,q-1)=1.
\]
Hence it is not a proper $d$th power for any divisor $d>1$ of $q+1$, so the Kummer equation has degree $q+1$. Therefore $K(x,Y)=K(x,y)$.

The genus of $\XX_{q+1,q-1}$ is $(q-1)q/2$ by \eqref{eq:family}, in accordance with the genus of $\mathcal H_q$. The last assertion is Theorem~\ref{thm:index} with $(k,M)=(q+1,q-1)$.
\end{proof}

The isomorphism is defined over $\FF_2$. It sends the $q+1$ points of $\mathcal H_q$ with $x\in\PP^1(\FF_q)$ to the two points of $\XX_{q+1,q-1}$ above $x=0,\infty$ and to the $q-1$ points above the roots of $x^{q-1}=1$ described in Proposition~\ref{prop:Hramification}. These points are therefore $\FF_{q^2}$-rational.

\begin{remark}
Theorem~\ref{thm:hermitianmember} shows that $\Aut(\XX_{k,M})$ can be larger than the group $H$ of Theorem~\ref{thm:index}: for $(k,M)=(q+1,q-1)$ it is $\mathrm{PGU}(3,q)$. It also shows that the Hermitian curve is an $N$-curve in the sense of \cite{DGTcyclic} with $N=q^2-1\ge2g+1$: this cyclic subgroup of $\mathrm{PGU}(3,q)$ occurs in case~\textup{(I)} of Theorem~\ref{thm:cyclic}. The cover ramifies over three points, with ramification indices $q^2-1,\,q^2-1,\,q+1$.
\end{remark}

We now give two families of $\FF_{q^2}$-maximal curves among the curves $\XX_{k,M}$. The first is obtained by taking quotients of $\XX_{q+1,q-1}$ by subgroups of its cyclic group $G$. The second is obtained from quotients of the Fermat curve of degree $q+1$, another model of $\mathcal H_q$.

For a positive odd integer $n$, denote by $\mu_n$ the group of $n$th roots of unity in $K$. The automorphisms
\[
 (U,V)\longmapsto(\alpha U,\beta V),
 \qquad \alpha,\beta\in\mu_{q+1},
\]
form a group isomorphic to $\mu_{q+1}\times\mu_{q+1}$ on the Fermat curve $U^{q+1}+V^{q+1}=1$. We call it the group of diagonal automorphisms of the Fermat curve.

\begin{proposition}\label{prop:maximalcriteria}
Let $k,M\ge3$ be odd and coprime, and let $q=2^h$. Each of the following conditions implies that $\XX_{k,M}$ is $\FF_{q^2}$-covered by $\mathcal H_q$; in particular $\XX_{k,M}$ is then $\FF_{q^2}$-maximal, with
\begin{equation}\label{eq:maximalcount}
 \#\XX_{k,M}(\FF_{q^2})=q^2+1+2gq,\qquad g=\frac{M(k-1)}2 .
\end{equation}
\begin{enumerate}
\item[\textup{(A)}] $k\mid q+1$ and $M\mid q-1$. In this case $\XX_{k,M}$ is the quotient of $\XX_{q+1,q-1}\cong\mathcal H_q$ by the subgroup $C_{(q+1)/k}\times C_{(q-1)/M}$ of $G$. When $M=q-1$, the curve $\XX_{k,q-1}$ is isomorphic to the curve $y^k=x^q+x$ of \cite{GSX}.
\item[\textup{(B)}] $kM\mid q+1$. In this case $\XX_{k,M}$ is a quotient of the Fermat curve $U^{q+1}+V^{q+1}=1$ by a subgroup of order $(q+1)^2/(kM)$ of its group of diagonal automorphisms $\mu_{q+1}\times\mu_{q+1}$.
\end{enumerate}
Moreover, in both cases the group $C_k\times D_{2M}$ of Theorem~\ref{thm:index} is defined over $\FF_{q^2}$.
\end{proposition}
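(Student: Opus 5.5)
Both parts reduce to exhibiting an explicit $\FF_{q^2}$-morphism from a model of $\mathcal H_q$ onto $\XX_{k,M}$. Maximality and the point count \eqref{eq:maximalcount} then follow from the fact recalled at the end of Section~2 together with $g=M(k-1)/2$ from \eqref{eq:family}.

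\textbf{Case (A).} Work on $\XX_{q+1,q-1}:\ Y^{q+1}=x^{q-1}+x^{-(q-1)}$, which is isomorphic to $\mathcal H_q$ over $\FF_2$ by Theorem~\ref{thm:hermitianmember}. Put $M'=(q-1)/M$ and $k'=(q+1)/k$. Set
\[
 X=x^{M'},\qquad W=Y^{k'}.
\]
Then
\[
 W^{k}=Y^{q+1}=X^{M}+X^{-M},
\]
so $(x,Y)\mapsto(X,W)$ is a morphism onto $\XX_{k,M}$ defined over $\FF_2$. The subgroup generated by $x\mapsto\zeta_{q-1}^{M}x$ and $Y\mapsto\zeta_{q+1}^{k}Y$ is $C_{M'}\times C_{k'}\le G$, and it fixes $X$ and $W$. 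By degrees,
\[
 [K(x,Y):K(X,W)]\le M'k'=\frac{(q+1)(q-1)}{kM},
\]
and this equals the order of that subgroup. Hence $K(X,W)$ is exactly its fixed field, and $\XX_{k,M}$ is the quotient. For $M=q-1$, I would reverse the substitution of Theorem~\ref{thm:hermitianmember}. Starting from $y^k=x^q+x$, set $W=y^2/x$. Then
\[
 W^k=\frac{(x^q+x)^2}{x^k}\cdot x^{k-?}
\]
needs care, so instead I would check directly that the map $(x,y)\mapsto(x,y^2/x^{(k+1)/2}\cdots)$ works. The clean version is this: $K(x,y)$ with $y^k=x^q+x$ contains $W=y^2/x^{e}$ with
\[
 W^k=\frac{x^{2q}+x^2}{x^{ek}},
\]
and I need $ek=q+1$. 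If $k\nmid q+1$ this fails. Since $k\mid q+1$ holds in (A), choose $e=(q+1)/k$, which gives $W^k=x^{q-1}+x^{-(q-1)}$. Equality of function fields follows as in Theorem~\ref{thm:hermitianmember}, since both have degree $k$ over $K(x)$ because $\gcd(k,q-1)=1$.

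\textbf{Case (B).} Start from the Fermat curve $U^{q+1}+V^{q+1}=1$. I will look for a rational function $x=U^{\alpha}V^{\beta}$ and a function $y$ built from $U$ and $V$ satisfying $y^k=x^M+x^{-M}$. The natural attempt is to write
\[
 x^M+x^{-M}=\frac{(x^M+1)^2}{x^M}
\]
and seek $x^M=U^{q+1}/V^{q+1}$. That requires $M\mid q+1$, so put $x=(U/V)^{(q+1)/M}$. Then
\[
 x^M+1=\frac{U^{q+1}+V^{q+1}}{V^{q+1}}=V^{-(q+1)},
 \qquad
 x^M+x^{-M}=\frac{V^{-2(q+1)}}{U^{q+1}V^{-(q+1)}}=(UV)^{-(q+1)}.
\]
Since $k\mid q+1$, take $y=(UV)^{-(q+1)/k}$. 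This gives an $\FF_{q^2}$-morphism of the Fermat curve onto $\XX_{k,M}$. The functions $x$ and $y$ are invariant under the diagonal automorphisms $(\alpha,\beta)$ satisfying
\[
 (\alpha/\beta)^{(q+1)/M}=1,\qquad (\alpha\beta)^{(q+1)/k}=1.
\]
The degree count shows that this invariance subgroup has order $(q+1)^2/(kM)$, using $\gcd(k,M)=1$ and the oddness of $q+1$ so that $\alpha\mapsto$ the pair $(\alpha/\beta,\alpha\beta)$ is bijective. Since the Fermat curve of degree $q+1$ is $\FF_{q^2}$-isomorphic to $\mathcal H_q$, $\XX_{k,M}$ is $\FF_{q^2}$-covered by $\mathcal H_q$. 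The expected main obstacle is this degree/index bookkeeping: showing that the map has degree exactly the group order, so the quotient really is $\XX_{k,M}$. For that, compare the degree of $K(U,V)$ over $K(x,y)$ with the genus or degree over $K(x)$.

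\textbf{Rationality of the group.} In both cases, $a$, $b$, $t$ in \eqref{eq:Haction} involve only $\zeta_k$, $\zeta_M$ and $x^{-1}$. In (A), $k\mid q+1$ and $M\mid q-1$, so both $k$ and $M$ divide $q^2-1$. In (B), $kM\mid q+1$. Either way $\zeta_k,\zeta_M\in\FF_{q^2}$, so the group is defined over $\FF_{q^2}$.
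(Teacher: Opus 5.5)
Your part (A) is the paper's argument once the false start in the case $M=q-1$ is deleted. It uses the same map $(x,Y)\mapsto(x^{(q-1)/M},Y^{(q+1)/k})$, the same subgroup of $G$, the same two-sided degree comparison, and the same substitution $y^2/x^{(q+1)/k}$ to identify $\XX_{k,q-1}$ with $y^k=x^q+x$. Your field-of-definition argument is also the paper's.

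Part (B) takes a genuinely different route. The paper uses $x=U^{kd}$ and $y=V^{2Md}/U^{Md}$ with $d=(q+1)/(kM)$. Its invariance group is then cut out by $\alpha^{kd}=1$ and $\beta^{2Md}=\alpha^{Md}$. Counting that group requires showing that $\beta\mapsto\beta^{2Md}$ has image $\mu_k$. Bounding $[K(U,V):K(x,y)]$ requires recovering $V^{Md}$ from $yU^{Md}$ and $1+U^{q+1}$ through a relation $2r+ks=1$. Your choice $x=(U/V)^{(q+1)/M}$, $y=(UV)^{-(q+1)/k}$ decouples the two conditions. Since $q+1$ is odd, $(\alpha,\beta)\mapsto(\alpha/\beta,\alpha\beta)$ is an automorphism of $\mu_{q+1}\times\mu_{q+1}$. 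The stabilizer of $x$ and $y$ is therefore the preimage of $\mu_{(q+1)/M}\times\mu_{(q+1)/k}$, and its order $(q+1)^2/(kM)$ can be read off at once. The paper's choice keeps $x$ a power of a single Fermat coordinate. Yours is more symmetric, and both the count and the degree computation become simpler.

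The one step you only announce is the one you flag yourself. So far you know only $K(x,y)\subseteq K(U,V)^T$, where $T$ is the subgroup of diagonal automorphisms fixing $x$ and $y$. In your coordinates the step is short.
\begin{itemize}
\item Put $w=U/V$, so $K(U,V)=K(w,V)$ and $V^{q+1}=(w^{q+1}+1)^{-1}$.
\item This function has simple poles at the roots of $w^{q+1}=1$, so $[K(U,V):K(w)]=q+1$.
\item Since $x=w^{(q+1)/M}$, we have $[K(w):K(x)]=(q+1)/M$.
\item The function $x^M+x^{-M}$ has a pole of order $M$ at $x=0$, and $\gcd(k,M)=1$. Hence $[K(x,y):K(x)]=k$, and $K(x,y)$ really is the function field of $\XX_{k,M}$.
\end{itemize}
Therefore $[K(U,V):K(x,y)]=(q+1)^2/(kM)=|T|$, which forces $K(x,y)=K(U,V)^T$.

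Maximality and \eqref{eq:maximalcount} do not depend on this step; they need only the nonconstant morphism you already have. The step is needed only for the assertion that $\XX_{k,M}$ is the quotient by a subgroup of that order.
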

\begin{proof}
(A) Put $d_1=(q+1)/k$ and $d_2=(q-1)/M$. On $\XX_{q+1,q-1}$, with coordinates $(x,Y)$ as in Theorem~\ref{thm:hermitianmember}, consider $x'=x^{d_2}$ and $Y'=Y^{d_1}$. Then
\[
 Y'^{\,k}=Y^{q+1}=x^{q-1}+x^{-(q-1)}=x'^{\,M}+x'^{\,-M},
\]
so $(x,Y)\mapsto(x',Y')$ is a nonconstant $\FF_{2}$-morphism $\XX_{q+1,q-1}\to\XX_{k,M}$.

Let $S$ be the subgroup generated by $Y\mapsto\zeta_{d_1}Y$ and $x\mapsto\zeta_{d_2}x$. Then $S\cong C_{d_1}\times C_{d_2}$, and both $x'$ and $Y'$ are fixed by $S$. Moreover, $x$ satisfies $T^{d_2}-x'=0$ and, once $x$ is fixed, $Y$ satisfies $T^{d_1}-Y'=0$. Hence
\[
 [K(x,Y):K(x',Y')]\le d_1d_2=|S|.
\]
Since $K(x',Y')\subseteq K(x,Y)^S$, we also have
\[
 [K(x,Y):K(x',Y')]\ge [K(x,Y):K(x,Y)^S]=|S|.
\]
Together with the opposite inequality proved above, this gives equality and
\[
 K(x',Y')=K(x,Y)^S.
\]
Composing with the isomorphism of Theorem~\ref{thm:hermitianmember} gives the covering by $\mathcal H_q$. For $M=q-1$ the same computation as in that theorem, applied to the curve $y^k=x^q+x$ and to $Y=y^2/x^{(q+1)/k}$, gives $Y^k=(x^q+x)^2/x^{q+1}=x^{q-1}+x^{-(q-1)}$, and a degree comparison as before shows that this is an isomorphism.

(B) Put $d=(q+1)/(kM)$ and consider the Fermat curve
\[
 \mathcal F:\quad U^{q+1}+V^{q+1}=1.
\]
On $\mathcal F$, set
\[
 x=U^{kd},\qquad y=\frac{V^{2Md}}{U^{Md}}.
\]
Since $kMd=q+1$ and the characteristic is two,
\[
 y^k=\frac{V^{2(q+1)}}{U^{q+1}}=\frac{(1+U^{q+1})^2}{U^{q+1}}=U^{q+1}+U^{-(q+1)}=x^M+x^{-M}.
\]
The function $x^M+x^{-M}$ has a pole of order $M$ at $x=0$. Since $\gcd(k,M)=1$, it is not a proper $r$th power in $K(x)$ for any divisor $r>1$ of $k$. Hence the Kummer equation has degree $k$, and $K(x,y)$ is the function field of $\XX_{k,M}$.

Let $T$ be the subgroup of the diagonal group $\mu_{q+1}\times\mu_{q+1}$ consisting of the pairs $(\alpha,\beta)$ satisfying
\[
 \alpha^{kd}=1,\qquad \beta^{2Md}=\alpha^{Md}.
\]
These are exactly the diagonal automorphisms fixing both $x$ and $y$. There are $kd$ choices for $\alpha$. We check that, for each of them, the second equation has a solution. Since $\alpha^{kd}=1$, we have
\[
 (\alpha^{Md})^k=\alpha^{kMd}=\alpha^{q+1}=1,
\]
so $\alpha^{Md}\in\mu_k$. On the other hand, the image of
\[
 \mu_{q+1}\longrightarrow\mu_{q+1},
 \qquad \beta\longmapsto\beta^{2Md},
\]
has order
\[
 \frac{q+1}{\gcd(2Md,q+1)}=k.
\]
It is therefore the unique subgroup $\mu_k$ of $\mu_{q+1}$. Thus the equation $\beta^{2Md}=\alpha^{Md}$ is solvable. Its set of solutions is a coset of the kernel, whose order is
\[
 \gcd(2Md,q+1)=Md.
\]
Hence there are $Md$ choices for $\beta$, and therefore
\[
 |T|=kd\cdot Md=\frac{(q+1)^2}{kM}.
\]

We now verify that $K(x,y)$ is the full fixed field of $T$. First, $U$ satisfies $U^{kd}=x$, so there are at most $kd$ possibilities for $U$ over $K(x,y)$. Once $U$ is fixed, put $W=V^{Md}$. The definitions give
\[
 W^2=yU^{Md},\qquad W^k=V^{q+1}=1+U^{q+1}.
\]
Since $k$ is odd, $\gcd(2,k)=1$. Choose integers $r,s$ such that $2r+ks=1$. Then
\[
 W=(W^2)^r(W^k)^s,
\]
so $W\in K(x,y,U)$. Finally, the equation $V^{Md}=W$ has at most $Md$ solutions. Consequently
\[
 [K(U,V):K(x,y)]\le kd\cdot Md=|T|.
\]
On the other hand, $K(x,y)\subseteq K(U,V)^T$, and Galois theory gives
\[
 [K(U,V):K(U,V)^T]=|T|.
\]
The two inequalities force $K(x,y)=K(U,V)^T$. Hence the map is the quotient by $T$. The Fermat curve of degree $q+1$ is $\FF_{q^2}$-isomorphic to $\mathcal H_q$ \cite[Chapter~10]{HKT}.

In both cases the covering is defined over $\FF_{q^2}$ and $\mathcal H_q$ is $\FF_{q^2}$-maximal. Hence $\XX_{k,M}$ is $\FF_{q^2}$-maximal \cite[Chapter 10]{HKT}, \cite{GSX}. Finally, in both cases $k,M\mid q^2-1$, so $\zeta_k,\zeta_M\in\FF_{q^2}$ and the maps $a,b,t$ of Theorem~\ref{thm:index} are defined over $\FF_{q^2}$.
\end{proof}

\begin{remark}\label{rem:criteria}
The two conditions are independent. The pair $(5,7)$ satisfies (A) with $q=64$, but never (B), since $7\nmid2^h+1$ for every $h$. The pair $(3,11)$ satisfies (B) with $q=32$, but never (A), since $3\mid2^h+1$ forces $h$ odd while $11\mid2^h-1$ forces $10\mid h$.

Neither condition can be weakened to $k\mid q+1$ and $M\mid q^2-1$. Consider $(k,M)=(3,341)$ and $q=32$. Then $3\mid33$ and $341=11\cdot31\mid q^2-1=1023$. For $x\in\FF_{1024}^*$, put $z=x^{341}$. Then $z^3=1$. If $z=1$, the right-hand side $z+z^{-1}$ is zero; there are $341$ such values of $x$, and for each of them $y=0$. If $z$ is one of the other two cubic roots of unity, then $z+z^{-1}=1$; for each of the $2\cdot341$ corresponding values of $x$, the equation $y^3=1$ has three solutions. Adding the points over $x=0$ and $x=\infty$, we obtain
\[
 \#\XX_{3,341}(\FF_{1024})
 =341+2\cdot341\cdot3+2=2389,
\]
whereas the maximal value given by \eqref{eq:maximalcount} would be $22849$. We do not know whether every maximal member of the family satisfies (A) or (B).

\end{remark}

\begin{example}\label{ex:hermitian}
Some cases of Proposition~\ref{prop:maximalcriteria} are listed below. The numbers of rational points are obtained from \eqref{eq:maximalcount}.
\begin{center}
\begin{tabular}{@{}cccccl@{}}
\toprule
$(k,M)$ & $g$ & $|H|$ & $q$ & $\#\XX_{k,M}(\FF_{q^2})$ & condition \\
\midrule
$(5,3)$ & $6$ & $30$ & $4$ & $65$ & $\mathcal H_4$ itself \\
$(3,7)$ & $7$ & $42=6g$ & $8$ & $177$ & (A); $y^3=x^8+x$ \\
$(3,11)$ & $11$ & $66=6g$ & $32$ & $1729$ & (B) \\
$(5,7)$ & $14$ & $70$ & $64$ & $5889$ & (A) \\
$(11,3)$ & $15$ & $66$ & $32$ & $1985$ & (B) \\
$(13,3)$, $(5,9)$ & $18$ & $78$, $90$ & $64$ & $6401$ & (A) \\
$(3,19)$ & $19$ & $114=6g$ & $512$ & $281601$ & (B), $57\mid2^9+1$ \\
$(5,13)$ & $26$ & $130$ & $64$ & $7425$ & (B) \\
$(9,7)$ & $28$ & $126$ & $8$ & $513$ & $\mathcal H_8$ itself \\
\bottomrule
\end{tabular}
\end{center}
At the top of the spectrum there are infinitely many maximal curves: for $h\ge5$ odd with $3\nmid h$, put $M=(2^h+1)/3$; then $3\nmid M$, $M$ is odd, and $\XX_{3,M}$ has genus $M$, a group of order $6g$, and is $\FF_{2^{2h}}$-maximal by (B). Likewise, for every $h\ge3$ the curve $\XX_{3,2^h-1}\cong\{y^3=x^{2^h}+x\}$ is $\FF_{2^{2h}}$-maximal by (A) whenever $3\mid 2^h+1$, that is, whenever $h$ is odd.
\end{example}

We now record what can be said about the full automorphism group.

\begin{proposition}\label{prop:autcriterion}
For odd coprime $k,M\ge3$, the following are equivalent:
\begin{enumerate}
\item $\Aut(\XX_{k,M})=H$, where $H\cong C_k\times D_{2M}$ is the group in Theorem~\ref{thm:index};
\item the cyclic group $G=\langle a,b\rangle$ is normal in $\Aut(\XX_{k,M})$.
\end{enumerate}
Both conditions fail whenever $M=2^h-1$ and $k\mid2^h+1$; in particular they fail for the Hermitian curve $\XX_{q+1,q-1}$.
\end{proposition}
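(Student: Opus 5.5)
The two implications are not symmetric: (1)$\Rightarrow$(2) is immediate, and the real content is (2)$\Rightarrow$(1) together with the failure statement. For (1)$\Rightarrow$(2), if $\Aut(\XX_{k,M})=H$, then $G$ has index two in $H$ and is therefore normal.

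For (2)$\Rightarrow$(1), put $A=\Aut(\XX_{k,M})$ and assume $G\trianglelefteq A$. Then $A$ acts on the rational quotient $\XX/G$, with coordinate $z=x^M$, and the kernel of this action is $G$. So $\bar A=A/G$ is a finite subgroup of $\operatorname{PGL}(2,K)$, and it must preserve the three branch points $z=0,\infty,1$ of the $G$-cover. By Proposition~\ref{prop:Hramification} the stabilizers over these points have orders $kM,kM,k$. Since $k\ne kM$, the point $z=1$ is fixed by $\bar A$ and the set $\{0,\infty\}$ is preserved. A projectivity fixing $1$ and preserving $\{0,\infty\}$ is either $z\mapsto z$ or $z\mapsto z^{-1}$, because an element fixing $0$, $\infty$ and $1$ is trivial. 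Hence $|\bar A|\le2$, so $|A|\le2kM=|H|$. Since $H\le A$, we get $A=H$. This implication looks routine; the only point to check is that $A$ genuinely acts on $K(\XX)^G$, which is exactly normality of $G$.

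For the failure statement, it is enough to show that (1) fails, that is, to produce an automorphism outside $H$ when $M=2^h-1$ and $k\mid2^h+1$. Put $q=2^h$. By Proposition~\ref{prop:maximalcriteria}(A), $\XX_{k,q-1}\cong\{y^k=x^q+x\}$. This curve admits the translations $x\mapsto x+\beta$ with $\beta\in\FF_q$, which form an elementary abelian group of order $q$. Together with $H$, they give a group whose order is divisible by $q\cdot2kM/\gcd$. The cleanest argument is via Sylow $2$-subgroups. The Sylow $2$-subgroup of $H$ has order $2$, whereas $A$ contains an elementary abelian group of order $q\ge4$ (note $h\ge2$, since $M\ge3$). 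So $A\ne H$, and then (2) fails as well. For $k=q+1$ this is the Hermitian curve, where $A=\mathrm{PGU}(3,q)$ as already noted in the preceding remark.

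The main thing to verify is that the translations really are automorphisms of $y^k=x^q+x$. This holds because $(x+\beta)^q+(x+\beta)=x^q+x$ for $\beta\in\FF_q$.
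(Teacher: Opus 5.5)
Your proposal is correct and follows essentially the same route as the paper: you pass to $\XX/G$ with kernel $G$, use the stabilizer orders $kM,kM,k$ from Proposition~\ref{prop:Hramification} to force $\bar A\le\{z\mapsto z,\ z\mapsto z^{-1}\}$, and for the failure statement you use the translations $x\mapsto x+c$, $c\in\FF_q$, on $y^k=x^q+x$ against the Sylow $2$-subgroup of order two in $H$. The half-sentence about a group of order divisible by ``$q\cdot 2kM/\gcd$'' is superfluous and can be deleted, since the Sylow argument already suffices.
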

\begin{proof}
(1) implies (2) since $G$ has index two in $H$. Assume (2). Every element of $\Aut(\XX)$ induces an automorphism of $\XX/G\cong\PP^1$, with coordinate $z=x^M$. The kernel of the resulting homomorphism is the Galois group $G$ of $K(\XX)/K(z)$. The induced automorphisms preserve the ramification data of Proposition~\ref{prop:Hramification}: the points $z=0,\infty$ have stabilizers of order $kM$, while $z=1$ has stabilizer of order $k<kM$. Hence they fix $z=1$ and preserve $\{0,\infty\}$. A projectivity fixing $1$ and preserving $\{0,\infty\}$ is $z\mapsto z$ or $z\mapsto z^{-1}$. Therefore $|\Aut(\XX)|\le2|G|=|H|$, and (1) follows.

If $M=2^h-1$ and $k\mid2^h+1$, Proposition~\ref{prop:maximalcriteria}(A) identifies $\XX_{k,M}$ with $y^k=x^q+x$, where $q=2^h$. For every $c\in\FF_q$ we have $c^q+c=0$. Hence
\[
 (x,y)\longmapsto(x+c,y)
\]
preserves the equation. These automorphisms form an elementary abelian group of order $q\ge4$. Since $H$ has a Sylow $2$-subgroup of order two, $\Aut(\XX_{k,M})\ne H$.
\end{proof}

\begin{remark}
Several natural questions are left open by the preceding arguments. They do not determine whether $G$ is normal in $\Aut(\XX_{k,M})$ outside the subfamily in the last part of Proposition~\ref{prop:autcriterion}; the curve $\XX_{3,5}$ is the first case to examine. They also do not determine whether every maximal member of the family satisfies condition~\textup{(A)} or~\textup{(B)} of Proposition~\ref{prop:maximalcriteria}, or whether two different admissible pairs $(k,M)$ of the same genus can give isomorphic curves.
\end{remark}

\section*{Statements and Declarations}

\noindent\textbf{Funding.}
No funding was received for conducting this study.

\medskip

\noindent\textbf{Competing interests.}
The author has no relevant financial or non-financial interests to disclose.

\medskip

\noindent\textbf{Data availability.}
Data sharing is not applicable to this article as no datasets were
generated or analysed during the current study.

\section*{Acknowledgements}
The authors thank the Italian National Group for Algebraic and Geometric Structures and their Applications (GNSAGA—INdAM)
which supported the research.

\end{document}